\numberwithin{equation}{section}
\theoremstyle{plain}
	\newtheorem{theo}[equation]{Theorem}
	\newtheorem{prop}[equation]{Proposition}
	\newtheorem{lemm}[equation]{Lemma}
	\newtheorem{corr}[equation]{Corollary}
	\newtheorem{lem/defn}[equation]{Lemma/Definition}
\theoremstyle{definition}
	\newtheorem{defi}[equation]{Definition}
	\newtheorem*{ackn}{Acknowledgements}
	\newtheorem{cons}[equation]{Construction}
	\newtheorem{nota}[equation]{Notation}
\theoremstyle{remark}
	\newtheorem{rema}[equation]{Remark}
\def\nc{\newcommand}
\def\on{\operatorname}
\def\co{\colon\thinspace}
\newcommand{\RomanNumeralCaps}[1]
    {\MakeUppercase{\romannumeral #1}}
\newcommand*{\rom}[1]{\expandafter\@slowromancap\romannumeral #1@}
\nc{\edit}[1]{\marginpar{\footnotesize{#1}}}
\newcommand{\lv}{\lvert}
\newcommand{\rv}{\rvert}
\newcommand{\ff}{\mathit{ff}}
\nc{\Z}{\mathbb{Z}}
\nc{\z}{\mathbb{Z}}
\nc{\zp}{\mathbb{Z}_p}
\nc{\zpl}{\mathbb{Z}_{(p)}}
\nc{\PP}{\mathbb{P}}
\nc{\R}{\mathbb{R}}
\nc{\ot}{\otimes}
\nc{\I}{\mathbb{I}}
\nc{\f}{\mathbb{F}}
\nc{\fp}{\mathbb{F}_p}
\nc{\hz}{H\mathbb{Z}}
\nc{\hzp}{H\mathbb{Z}_p}
\nc{\hzpl}{H\mathbb{Z}_{(p)}}
\nc{\fq}{\mathbb{F}_q}
\nc{\fpn}{\mathbb{F}_{p^n}}
\nc{\wfq}{W(\mathbb{F}_q)}
\nc{\wfpn}{W(\mathbb{F}_{p^n})}
\nc{\pmot}{\frac{p-1}{2}}
\nc{\sph}{\mathbb{S}}
\nc{\sphtriv}{\mathbb{S}^{\on{triv}}}
\nc{\sphp}{\mathbb{S}_{p}}
\nc{\sphpl}{\mathbb{S}_{(p)}}
\nc{\sphwq}{\mathbb{S}_{W(\mathbb{F}_q)}}
\nc{\sphwpn}{\mathbb{S}_{W(\mathbb{F}_{p^n})}}
\nc{\sphwk}{\mathbb{S}_{W(k)}}
\nc{\sphxk}{\sph[x_k]}
\nc{\sphxm}{\sph[x_{m}]}
\nc{\sphxmk}{\sph[x_{mk}]}
\nc{\sphsk}{\sph[\sigma_k]}
\nc{\sphsmk}{\sph[\sigma_{mk}]}
\nc{\sphsell}{\sph[\sigma_\ell]}
\nc{\sphpsk}{\sph_{(p)}[\sigma_k]}
\nc{\sphpzk}{\sph_{(p)}[z_k]}
\nc{\sphpzpmone}{\sph_{(p)}[z_{2(p-1)}]}
\nc{\sphpztwo}{\sph_{(p)}[z_2]}
\nc{\sphpsmk}{\sph_{(p)}[\sigma_{mk}]}
\nc{\sphpsell}{\sph_{(p)}[\sigma_\ell]}
\nc{\sphzmk}{\sph[z_{mk}]}
\nc{\sphzk}{\sph[z_k]}
\nc{\sphzpmone}{\sph[z_{2(p-1)}]}
\nc{\sphztwo}{\sph[z_2]}
\nc{\musgmpn}{MU[\sigma_{2p^n-2}]}
\nc{\muallspi}{MU[\sigma_{2p^i-2} \mid i< n]}
\nc{\muallspinpo}{MU[\sigma_{2p^i-2} \mid i< n+1]}
\nc{\wdgmuallspi}{\wedge_{MU[\sigma_{2p^i-2} \mid i< n]}}
\nc{\thhmuallspi}{\ensuremath{\textup{THH}^{MU[\sigma_{2p^i-2} \mid i<n]}}}
\nc{\musi}{MU[\sigma_i]}
\nc{\musk}{MU[\sigma_k]}
\nc{\muspi}{MU[\sigma_{2p^i-2}]}
\nc{\sgmk}{\sigma_k}
\nc{\sgmmk}{\sigma_{mk}}
\nc{\sgml}{\sigma_{\ell}}
\nc{\sgmpi}{\sigma_{2p^i-2}}
\nc{\xrtmx}{X(\sqrt[m]{x})}
\nc{\artma}{A(\sqrt[m]{a})}
\nc{\hzplsk}{H\mathbb{Z}_{(p)}[\sigma_{k}]}
\nc{\hzplsmk}{H\mathbb{Z}_{(p)}[\sigma_{mk}]}
\nc{\wdgsphsmk}{\wdg_{\sph[\sigma_{mk}]}}
\nc{\wdgsphsk}{\wdg_{\sph[\sigma_{k}]}}
\nc{\wdgsphpsmk}{\wdg_{\sph_{(p)}[\sigma_{mk}]}}
\nc{\wdgsphpsk}{\wdg_{\sph_{(p)}[\sigma_{k}]}}
\nc{\bpn}{BP \langle n \rangle}
\nc{\hfp}{H\mathbb{F}_p}
\nc{\T}{\mathbb{T}}
\nc{\vo}{V(1)}
\nc{\vos}{V(1)_*}
\nc{\ttw}{T(2)}
\nc{\pis}{\pi_*}
\nc{\ttws}{T(2)_*}
\nc{\gdna}{\gdn(A\hmod)}
\nc{\gdnr}{\gdn(R\hmod)}
\nc{\wdg}{\wedge}
\nc{\wdgp}{\wedge_{\mathbb{S}_p}}
\nc{\wdgfp}{\wedge_{H\mathbb{F}_p}}
\nc{\wdgmu}{\wedge_{MU}}
\nc{\AAA}{\mathbb{A}}
\nc{\LL}{\mathbb{L}}
\nc{\OO}{\mathcal{O}}
\nc{\X}{\EuScript{X}}
\nc{\sZ}{\EuScript{Z}}
\nc{\id}{{\on{id}}}
\nc\cone{{\on{cone}}}
\nc{\Rep}{{\on{Rep}}}
\nc\Ob{{\on{Ob}}}
\nc\Spec{{\on{Spec}}}
\mathchardef\mhyphen="2D
\newcommand{\hmod}{\mhyphen\mathsf{Mod}}
\nc\coMod{{\on{coMod}}}
\nc\Perf{{\on{Perf}}}
\nc\End{{\on{End}}}
\nc{\into}{\hookrightarrow}
\nc{\tr}{\on{tr}}
\nc{\ev}{\on{ev}}
\nc{\im}{\on{im}}
\nc{\hfps}{H{\mathbb{F}_p}_*}
\nc{\Mot}{\on{Mot}}
\nc{\pt}{\on{pt}}
\nc{\coker}{\on{coker}}
\nc{\rk}{\on{rank}}
\nc{\TOP}{\on{Top}_{\mathbb{C}}^{s}}
\nc{\gr}{\on{Gr}}
\nc{\Catperf}{\text{Cat}^{\text{perf}}}
\nc{\Sym}{\on{Sym}}
\nc{\xra}{\xrightarrow}
\nc{\lra}{\xleftarrow}
\nc{\Bet}{\mathbf{Betti}_{X}}
\nc{\codim}{\on{codim}}
\nc{\Fred}{\on{Fred}}
\nc{\colim}{\on{colim}}
\nc{\KK}{{\bf K}}
\nc{\onto}{\twoheadrightarrow}
\nc{\A}{\mathbb{A}}
\nc{\Aff}{\on{Aff}}
\nc{\SH}{\on{SH}}
\nc{\QCoh}{\on{QCoh}}
\nc{\Alg}{\on{Alg}}
\nc{\alg}{\on{Alg}}
\nc{\lmod}{\on{LMod}}
\nc{\rmod}{\on{RMod}}
\nc{\Br}{\on{Br}}
\nc{\ta}{\widetilde{\a}}
\nc{\Shv}{\on{Shv}}
\nc{\GG}{\mathbb{G}}
\nc{\red}{\color{red}}
\nc{\blue}{\color{blue}}
\nc{\an}{\on{an}}
\nc{\Pre}{\on{Pre}}
\nc{\assact}{\on{Ass}_{\on{act}}^{\otimes }}
\nc{\spact}{\on{Sp}_{\on{act}}^{\otimes }}
\nc{\spactprod}{{\on{Sp}^{\Z}_{\on{act}}}^{\otimes }}
\nc{\qc}{\on{qc}}
\nc{\op}{\on{op}}
\nc{\shEnd}{{\mathcal End}}
\nc{\Sph}{\mathbb{S}}
\nc{\Top}{\on{Top}}
\nc{\Map}{\on{Map}}
\nc{\Vect}{\on{Vect}}
\nc{\holim}{\on{holim}}
\nc{\fun}{\on{Fun}}
\nc{\leqgrsp}{\ensuremath{\textup{Leq}}\big(\grm(\sp^{BS^1}),(-)^{tC_p}\big)}
\newcommand{\cat}[1]{\ensuremath{\EuScript #1}}
\DeclareMathOperator{\map}{\ensuremath{\textup{Map}}}
\newcommand{\spcircle}{\ensuremath{\textup{Sp}^{BS^1}}}
\DeclareMathOperator{\triv}{\ensuremath{\textup{triv}}}
\DeclareMathOperator{\thh}{\ensuremath{\textup{THH}}}
\newcommand{\logthhku}{\thh(ku_{(p)}\mid u_2)}
\newcommand{\logthhell}{\thh(\ell \mid v_1)}
\DeclareMathOperator{\grm}{\ensuremath{\textup{Gr}}_m}
\DeclareMathOperator{\grmsp}{\ensuremath{\textup{Gr}}_m(\sp)}
\DeclareMathOperator{\ntc}{\ensuremath{\textup{TC}^-}}
\DeclareMathOperator{\tp}{\ensuremath{\textup{TP}}}
\DeclareMathOperator{\tc}{\ensuremath{\textup{TC}}}
\DeclareMathOperator{\kth}{\ensuremath{\textup{K}}}
\newcommand{\kup}{ku_p}
\newcommand{\kupl}{ku_{(p)}}
\nc{\C}{\cat C}
\nc{\D}{\cat D}
\nc{\V}{\cat V}
\def\A{\mathcal{A}}
\def\a{\alpha}
\def\Perf{\on{Perf}}
\def\sp{\on{Sp}}
\def\cycsp{\on{CycSp}}
\def\sp{\on{Sp}}
\def\V{\EuScript{V}}
\nc{\W}{\mathbb{W}}
\def\QCoh{\on{QCoh}}
\title{Algebraic $K$-theory of the two-periodic first Morava $K$-theory} 
\author{Haldun \"Ozg\"ur Bay{\i}nd{\i}r}
\begin{document}
\maketitle
%\maketitle
\begin{abstract}
Using the root adjunction formalism developed in an earlier work and logarithmic THH, we obtain a simplified computation of $T(2)_*\text{K}(ku)$ for $p>3$. Through this, we also produce a new algebraic $K$-theory computation; namely  we obtain   $T(2)_*\text{K}(ku/p)$, where $ku/p$ is  the $2$-periodic Morava $K$-theory spectrum of height $1$.
\end{abstract}

\section{Introduction}

\let\thefootnote\relax\footnotetext{2020 \textit{Mathematics Subject Classification.} Primary  19D55, 55P43, 55Q51}
In this work, we continue the study from \cite{ausoni2022adjroot}  
of the effect of root adjunctions of ring spectra at the level of algebraic $K$-theory. In particular, we consider the maps of ring spectra $\ell_p \to ku_p$ and $k(1)\to ku/p$ where $\ell_p$ denotes the Adams summand of the connective $p$-completed complex $K$-theory spectrum $ku_p$ and $k(1)$ denotes the connective first Morava $K$-theory spectrum. 

Ausoni and Rognes compute the $V(1)$-homotopy of $\kth(\ell_p)$ in \cite{ausonirogneskthryoftopologicalkthry} for $p>3$.  Later, Ausoni extends this to a computation of $V(1)_* \kth(ku_p)$  \cite{ausoni2010kthryofcomplexkthry}. An interest for $\kth(ku)$ stems from the fact that it classifies virtual 2-vector bundles, a 2-categorical analogue of ordinary
complex vector bundles \cite{baas2011stablebundlesrig}. 

As an outcome of his computations, Ausoni observes that the relationship between $V(1)_*\kth(\ell_p)$ and $V(1)_*\kth(ku_p)$ through the map $V(1)_*\kth(\ell_p) \to V(1)_*\kth(ku_p)$ resembles a height $2$ analogue of $\kth_*(\zp;\z/p) \to \kth_*(\zp[\zeta_p];\z/p)$ for the cyclotomic extension $\z_p\to \zp[\zeta_p]$ where $\zeta_p$ is a primitive $p$th root of unity; the computation of the former is due to  Hesselholt and Madsen \cite[Theorem D]{hesselholt2003ktheoryoflocalfields}. For instance, $\kth(\zp[\zeta_p];\z/p)$ is essentially given by adjoining a $p-1$-root to $v_1$ in  $\kth(\zp;\z/p)$. For  $V(1)_*\kth(ku_p)$, Ausoni shows that there is a $p-1$-root $b$ of $-v_2$, i.e.\ $b^{p-1} = -v_2$. Furthermore, he proves that $T(2)_*\kth(ku_p)$ is given by adjoining a $p-1$-root to $-v_2$ in $T(2)_*\kth(\ell_p)$ as follows.

\begin{theo}[\cite{ausoni2010kthryofcomplexkthry}, Theorem \ref{theo restated Ausoni algebraic k theory of ku}]\label{theo Ausoni algebraic k theory of ku}
Let $p>3$ be a prime. There is an isomorphism of $\fp[b]$-algebras:
\[T(2)_* \kth(ku) \cong T(2)_* \kth(\ell) [b]/(b^{p-1}+v_2).\]
where $\lv b \rv = 2p+2$.
\end{theo}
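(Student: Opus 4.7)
The overall plan is to combine trace methods with the root adjunction formalism of the author. After $p$-localization, $\kupl$ sits over $\ell$ as $\ell[u]/(u^{p-1}-v_1)$ with $\lv u \rv = 2$; that is, $\kupl$ is obtained from $\ell$ by adjoining a $(p-1)$-st root of $v_1$. This places the computation squarely in the setting where the root adjunction formalism describes THH of the extension, together with its cyclotomic structure, in terms of the base, and the proof should reduce the target to the already known $\ttws \kth(\ell)$ of Ausoni--Rognes.

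First I would feed the root extension $\ell \to \kupl$ into the root adjunction formula for THH, obtaining $\thh(ku)$ as a controlled base change of $\thh(\ell)$ with its inherited cyclotomic structure; tracking the cyclotomic data here, and not merely the underlying spectrum, is essential because we ultimately want $\tc$ rather than $\thh$. Second, I would pass to the logarithmic variants $\logthhell$ and $\logthhku$. The logarithmic refinement is what makes the $\ttw$-local computation tractable: log-ification effectively inverts $v_1$ as a B\"okstedt class, so the $v_1$-torsion contributions that clutter the ordinary Tate and homotopy-fixed-point spectral sequences vanish, leaving clean finite algebraic extension data over $\ttws \logthhell$.

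Third, I would compute $\ttws \ntc(ku)$ and $\ttws \tp(ku)$ from the log Tate and homotopy-fixed-point spectral sequences, identify the cyclotomic Frobenius on the adjoined root class, and assemble $\ttws \tc(ku)$ via the Nikolaus--Scholze equalizer of $\on{can}$ and $\varphi^{hS^1} \co \ntc \to \tp$. A Dundas--Goodwillie--McCarthy-type result, in the $\ttw$-local form used by Ausoni--Rognes and Hesselholt--Madsen, then transports the $\tc$ computation to $\ttws \kth(ku)$, and comparison with the corresponding package for $\ell$ yields the asserted description as an extension of $\ttws \kth(\ell)$.

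The main obstacle is the ``redshift'' bookkeeping. The adjoined root $u$ sits in degree $2$ on $ku$, yet the corresponding generator $b$ on the $K$-theory side lives in degree $2p+2$, and its $(p-1)$-st power records $-v_2 \in \ttws \kth(\ell)$ rather than $v_1$. Establishing $b^{p-1} + v_2 = 0$ amounts to pinning down a single multiplicative extension in the root adjunction spectral sequence for $\tc$ and verifying that the cyclotomic Frobenius acts on the root class in precisely the way required to match Ausoni's target formula. Everything else should be a mechanical comparison with the existing $\ttws \kth(\ell)$ computation, but this final extension is where the geometric content of the redshift phenomenon is encoded, and it is the step that the root adjunction and log THH formalisms are designed to make transparent.
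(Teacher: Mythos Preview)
Your framework---root adjunction, logarithmic THH, and Dundas--Goodwillie--McCarthy---matches the paper's, but your proposed execution diverges from it in a way that forfeits the paper's main simplification.

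The paper does \emph{not} compute $\ttws\ntc(ku)$ or $\ttws\tp(ku)$ via log Tate or homotopy-fixed-point spectral sequences, nor does it identify the cyclotomic Frobenius on a root class. Instead, it uses the root adjunction $\kup\simeq\ell_p(\sqrt[p-1]{v_1})$ to equip $\tc(\kup)$ with a $\z/(p-1)$-grading whose weight-$0$ piece is $\tc(\ell_p)$; the whole theorem then reduces to exhibiting a single \emph{unit} $b\in\ttws\tc(\kup)$ of weight $1$. The element $b$ is not produced internally from the THH side but is imported from Ausoni's earlier construction of the higher Bott element $b\in V(1)_{2p+2}\kth(\kup)$, together with the relation $b(b^{p-1}+v_2)=0$ already established there in $K$-theory. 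Log THH enters only to show that multiplication by $b$ is a $T(2)$-local equivalence on $\thh(\kup)^{hS^1}$, and the argument is a one-line coconnectivity trick: in $\vos\logthhku$ multiplication by $u_2\kappa_1$ is an isomorphism in large degrees, so its cofiber is bounded above and dies under $L_{T(2)}(-)^{hS^1}$. Once $b$ is a unit, $b^{p-1}+v_2=0$ follows immediately from the imported $K$-theory relation.

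Your plan to ``pin down a single multiplicative extension'' and ``verify the cyclotomic Frobenius acts on the root class'' is essentially the hard work this paper was written to bypass; without the $K$-theoretic input for $b$ and the relation $b(b^{p-1}+v_2)=0$, you have no evident mechanism for establishing $b^{p-1}=-v_2$, and running the full spectral sequences would land you back in Ausoni's original argument rather than the present simplification.
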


\begin{rema}
Since $T(2)_*\kth(\ell)$ is known due to Ausoni and Rognes \cite[Theorem 0.3]{ausonirogneskthryoftopologicalkthry}, the theorem above provides an explicit description of $T(2)_*\kth(ku)$.
\end{rema}

Following this comparison, Ausoni, the author and Moulinos construct a root adjunction method for ring spectra and study the algebraic $K$-theory, THH and logarithmic THH of ring spectra obtained via root adjunction  \cite{ausoni2022adjroot}. Let $A$ be an $E_1$-ring spectrum and let $a \in \pi_{mk}A$. Under suitable hypothesis, this construction provides another $E_1$-ring  $A(\sqrt[m]{a})$ for which the homotopy ring  of $A(\sqrt[m]{a})$ is precisely given by a root adjunction:
\[\pis A(\sqrt[m]{a}) \cong \pis A [z]/(z^m-a).\]

Furthermore, $A(\sqrt[m]{a})$ is an  $E_1$-algebra in  $\fun(\z/(m)^{\textup{ds}}, \sp)$ equipped with the Day convolution symmetric monoidal structure; we say $A(\sqrt[m]{a})$ is an $m$-graded $E_1$-ring. Roughly speaking, this structure may be considered as  a splitting $A(\sqrt[m]{a})\simeq \vee_{i \in \z/m} A(\sqrt[m]{a})_{i}$, which we call the weight grading on $A(\sqrt[m]{a})$, for which  the multiplication on $A(\sqrt[m]{a})$ is given by maps respecting this grading over $\z/m$:
\[A(\sqrt[m]{a})_i \wdg  A(\sqrt[m]{a})_j \to A(\sqrt[m]{a})_{i+j}.\]
 Furthermore, we have $A(\sqrt[m]{a})_i = \Sigma^{ik}A$ for $0\leq i<m$. This results in a canonical splitting of $\thh(A\sqrt[m]{a})$ into a coproduct of $m$-cofactors as an $S^1$-equivariant spectrum. It follows by \cite[Theorem 1.9]{ausoni2022adjroot} that at the level of algebraic $K$-theory,  the  map \[\kth(A)\to \kth(A(\sqrt[m]{a}))\] is the inclusion of a wedge summand whenever $A$ is $p$-local and $p \nmid m$. 

The authors prove in \cite{ausoni2022adjroot} that there is an equivalence of $E_1$-rings $ku_p \simeq \ell_p(\sqrt[p-1]{v_1})$. This equips $ku_p$ with the structure of a $p-1$-graded $E_1$-ring through $ku_p \simeq \vee_{0\leq i<p-1} \Sigma^{2i}\ell_p$ and one obtains that $\thh(ku_p)$  admits  an $S^1$-equivariant splittings into $p-1$ summands. 
\begin{rema}
    Results of similar nature are given in \cite{carmeli2021chromatic} and \cite{ben2023descent}. In \cite{carmeli2021chromatic}, Carmeli, Schlank and Yanovski define a notion of cyclotomic extensions for $T(n)$-local (or $K(n)$-local) $E_\infty$-ring spectra and in \cite{ben2023descent}, the same authors and Ben-Moshe prove that $T(n+1)$-local  algebraic $K$-theory carries height $n$ cyclotomic extensions to height $n+1$ cyclotomic extensions. These results may be considered to be analogous to Theorem \ref{theo Ausoni algebraic k theory of ku}. 
\end{rema}
In this work, our first main objective is to make a simplified computation of $T(2)_*\kth(ku)$ which allows for a clear and conceptual picture of how Theorem \ref{theo Ausoni algebraic k theory of ku} is proved. For this, we begin by showing that $ku_p$ admits the structure of a $p-1$-graded $E_\infty$-ring lifting its $p-1$-graded $E_1$-ring structure  provided by root adjunction. Moreover, we show  that  this equips $\thh(\kup)$ and $\tc(ku_p)$ with $p-1$-graded $E_\infty$-ring structures and  results in an $S^1$-equivariant splitting of the logarithmic THH of $ku_p$ (in the sense of Rognes \cite{rognes2009topologicallogarithmic} computed in \cite{rognes2018logthhofku}). After this, we obtain Theorem \ref{theo Ausoni algebraic k theory of ku} through a computation involving logarithmic THH of $ku_p$. In \cite[Sections 3 and 4]{ausoni2010kthryofcomplexkthry}, Ausoni constructs what he calls the higher Bott element $b \in V(1)_{2p+2}\kth(ku_p)$ and he identifies the image of $b$ in $V(1)_*\thh(ku_p)$. This is the only result that we take from Ausoni's work as input. In particular, our computation of $T(2)_*\kth(ku)$ avoids the low dimensional computations and the infinite spectral sequence argument of \cite[Sections 5, 6 and 7]{ausoni2010kthryofcomplexkthry}. We give an outline of our computation in Section \ref{sec outline} below.

\begin{rema}
    In \cite{hahn2022motivic}, the authors construct a motivic filtration for the topological cyclicic homology of ring spectra and using this, they obtain a simplified computation of $V(1)_*\tc(\ell_p)$ which also applies to the case $p=3$. In this work, we do not use the motivic filtration of \cite{hahn2022motivic}. On the other hand, the motivic filtration should make it possible to obtain another simplified computation of $T(2)_*\kth(ku)$ which also recovers $V(1)_*\kth(ku_p)$ including the case of $p=3$. We hope that our methods, including the use of the $p-1$-graded ring structure on $ku_p$, would also be useful for the motivic approach to $V(1)_*\kth(ku_p)$.
    \end{rema}
\begin{rema}\label{rema iterated kthry of ku}
The iterated algebraic $K$-theory spectrum  $K^{(n)}(ku)$ is expected have an interpretation in terms of a  suitable notion of $n+1$-vector bundles \cite{lind2020twistediteratedkthryofku}. We believe that our approach to $\tc(ku_p)$ and the $p-1$-graded $E_\infty$-ring structure we construct on $\tc(ku_p)$ will be useful for the study of $K^{(n)}(ku)$.
\end{rema}
\begin{rema}\label{rema sagave lundemo and ausoni}
Currently, Christian Ausoni, the author, Tommy Lundemo and Steffen Sagave are working on generalizing the methods of this work  to obtain a higher height analogue of Theorem \ref{theo Ausoni algebraic k theory of ku} that relates the algebraic $K$-theory of $E_n$ to that of the truncated Brown-Peterson spectrum $\bpn$.
\begin{comment}
Namely, the goal is to show that $K(n+1)_*\kth(E_n)$ contains a $p-1$-root of $(-1)^{n}v_{n+1}$ through which $K(n+1)_*\kth(E_n)$ contains \[K(n+1)_*\big(\kth(\bpn)\big)[b]/(b^{p-1}+(-1)^{n}v_{n+1})\] as a summand; here, $\bpn$ is the Brown-Peterson spectrum of height $n$ with Witt-vector coefficients.
\end{comment}
\end{rema}

In a later work \cite{ausoni2012kthrymoravaktheory}, Ausoni and Rognes compute $V(1)_*\kth(\ell/p)$ where $\ell/p$ is the connective Morava $K$-theory spectrum $k(1)$ of height one. Our computational approach to $T(2)_*\kth(ku_p)$ naturally provides a computation of $T(2)_*\kth(ku/p)$; see Section \ref{sec outline} for an outline. Namely, we obtain the first computation of $T(2)_*\kth(ku/p)$. Note that, $ku/p$ is also called the connective $2$-periodic Morava $K$-theory of height one. For the following, we consider $T(2)_*\kth(ku/p)$ as an $\fp[b]$-module through its  $T(2)_*\kth(ku_p)$-module structure; $b$ is the higher Bott element as before. 
\begin{theo}[Theorem \ref{theo restated kth of ku mod p}]\label{theo kth of ku mod p}
Let $p>3$ be a prime. There is an isomorphism of $\fp[b]$-modules:
\[T(2)_*\kth(ku/p) \cong T(2)_* \kth(\ell/p)\otimes_{\fp[v_2]} \fp[b]\]
with $\lv b \rv = 2p+2$ and in the tensor product above, we take $v_2 = - b^{p-1}$. 
\end{theo}

Together with \cite[Theorem 1.1]{ausoni2012kthrymoravaktheory}, the theorem above provides a complete description of $T(2)_*\kth(ku/p)$. In particular, similar to Theorem \ref{theo Ausoni algebraic k theory of ku}, $T(2)_*\kth(ku/p)$ is given by a $p-1$-fold coproduct of shifted  copies of $T(2)_*\kth(\ell/p)$.

Indeed, Ausoni and Rognes compute $V(1)_*\kth(\ell/p)$ with the goal of investigating
how localization and Galois descent techniques that have been used for (local)
number rings or fields can be applied to the algebraic K-theory of ring-spectra,
in particular $\ell_p$ and $ku_p$. For instance, there is a localization sequence 
\[\kth(\z/p) \to \kth(\z_p) \to \kth(\mathbb{Q}_p)\]
and the $T(1)$-homology of the algebraic $K$-theory of the fraction field $\mathbb{Q}_p$ is the target of a Galois descent spectra sequence \cite{thomason1985algkthryandetalecohomology}. 

To carry this picture to $\ell_p$, they define $\kth(\ff (\ell_p))$, what they call the algebraic $K$-theory of the fraction field of $\ell_p$, as the cofiber of the transfer map  $\kth(L/p) \to \kth(L_p)$.  Note that $\kth(\ff(\ell_p))$ is not claimed to be the algebraic $K$-theory of an $E_1$-ring. After computing  $T(2)_* \kth (\ff (\ell_p))$ \cite{ausoni2009kthryoffractionfield}, Ausoni and Rognes conjecture that this should also be the target of a Galois descent spectral sequence as their computation suggest. Carrying this discussion to $ku_p$, they state a conjectural formula \cite[Section 3]{ausoni2009kthryoffractionfield}:
\begin{equation}\label{eq conjectural formula of ausoni rognes}
    T(2)_*\kth(\ff(ku_p)) \cong T(2)_*\kth(\ff(\ell_p)) \otimes_{ \fp[v_2]} \fp[b], 
\end{equation}
here, $\kth(\ff(ku_p))$ is defined to be the cofiber of the transfer map below. 
\[\kth(KU/p) \to \kth(KU_p) \to \kth(\ff(ku_p))\]
Using Theorem \ref{theo kth of ku mod p}, we prove the conjectural formula in  \eqref{eq conjectural formula of ausoni rognes}, see Theorem \ref{theo k theory of the fraction field of ku}.

\begin{nota}
For $p>3$, we let $V(1)$ denote the spectrum given by $\sph/(p,v_1)$. Due to \cite{oka1982multiplicativefiniteringspectra}, this is a homotopy commutative ring spectrum. Inverting the self map $v_2$ of $V(1)$, we obtain $T(2):=V(1)[v_2^{\pm 1}]$. Since $L_{T(2)}V(1) \simeq T(2)$ \cite[Section 1.5]{hovey1997vnelementsinringspectraandbordism}, we obtain that $T(2)$ is also a homotopy commutative ring spectrum and  $V(1) \to T(2)$ is a map of  a homotopy commutative ring spectra.

We let $\cycsp$ denote the $\infty$-category of cyclotomic spectra as in \cite[Definition 2.1]{antieau2020beilinson}; this is a slight variation of what is called $p$-cyclotomic spectra  in \cite{nikolausscholze2018topologicalcyclic}. In particular, an  object of $\cycsp$ is an $S^1$-equivariant spectrum $E$ with an $S^1$-equivariant map $E \to E^{tC_p}$.

 For a given small symmetric monoidal $\infty$-category $\C$ and a presentably symmetric monoidal $\infty$-category $\cat D$, we let $\fun(\cat C, \cat D)$ denote the corresponding functor $\infty$-category equipped with the symmetric monoidal structure given by Day convolution \cite{glasman2016dayconv, day1970closedcategoriesoffunctors}. For a simplicial set $K$, we let $\D^{K}$  denote the symmetric monoidal $\infty$-category given by the simplicial set of maps from $K$ to $\D$ equipped with the levelwise symmetric monoidal structure \cite[Remark 2.1.3.4]{lurie2016higher}.
 
 For $m \geq 0$, we when we mention the abelian group $\z/m$ as a symmetric monoidal monoidal $\infty$-category, we mean the corresponding discrete symmetric monoidal $\infty$-category. This is often denoted by $(\z/m)^{\textup{ds}}$ in the literature.  
 
\end{nota}
\begin{ackn}
I would like to thank Christian Ausoni for introducing me to this subject and for the valuable discussions I have had with him that led to this project. I also would like to thank Tasos Moulinos and Maximillien Peroux for answering my various questions regarding this work. Furthermore, I also benefited from  discussions with Gabriel Angelini-Knoll, Andrew Baker,  Jeremy Hahn, Yonatan Harpaz, Eva H\"oning,  Thomas Nikolaus, Tommy Lundemo, Birgit Richter and Steffen Sagave; I would like to thank them as well. We  acknowledge support from the project ANR-16-CE40-0003 ChroK and the Engineering and Physical Sciences Research Council (EPSRC)  grant EP/T030771/1.
\end{ackn}

\section{Outline}\label{sec outline}

Here, we provide an outline of the proofs of Theorems \ref{theo Ausoni algebraic k theory of ku} and \ref{theo kth of ku mod p}. In  Section \ref{sec einfty grading on ku}, we use the $\sp$-linear Fourier transform developed in \cite{carmeli2021chromatic} to show that the action of the $p$-adic Adams operations on $ku_p$ equips $ku_p$ with the structure of a $p-1$-graded $E_\infty$-ring, i.e.\ an $E_\infty$-algebra in $\fun(\z/(p-1),\sp)$, compatible with the splitting \[ku_p \simeq \vee_{0\leq i<p-1}\Sigma^{2i}\ell_p.\] Furthermore, we show that the resulting $p-1$-graded $E_1$-ring structure on $ku_p$ agrees with that provided by the root adjunction methods of \cite{ausoni2022adjroot}, i.e.\ that provided by the equivalence $ku_p \simeq \ell_p(\sqrt[p-1]{v_1})$. 

In Section \ref{sec graded THH, TC, neg TC}, we discuss the resulting grading on $\thh(ku_p)$ and $\tc(ku_p)$ with further details provided in Appendix \ref{appendix graded tc} where we show that $\tc(-)$ is a lax symmetric monoidal functor from the $\infty$-category of $p-1$-graded $E_1$-rings to the $\infty$-category of $p-1$-graded spectra. We deduce that  $\thh(ku_p)$ is an $E_\infty$-algebra in $p-1$-graded cyclotomic spectra and that $\tc(ku_p)$ and $\ntc(ku_p)$ are $p-1$-graded $E_\infty$-rings. 

Since $ku_p \simeq \ell_p(\sqrt[p-1]{v_1})$, it follows from the results of \cite{ausoni2022adjroot} that \[\tc(ku_p)_0\simeq \tc(\ell_p) \textup{\ \ and \ }\ntc(ku_p)_0\simeq \ntc(\ell_p).\] Therefore, to obtain Theorem \ref{theo Ausoni algebraic k theory of ku}, it suffices to show that there is a unit 
\[b \in T(2)_{2p+2} \tc(ku_p)\]
of weight $1$. For this, we use the element $b \in V(1)_{2p+2} \kth(ku_p)$ constructed in \cite[Section 3]{ausoni2010kthryofcomplexkthry}. 

We show in Section \ref{sec log thh with an equivariant splitting} that logarithmic THH of $ku_p$ (as in \cite{rognes2018logthhofku}) also admits an $S^1$-equivariant splitting compatible with that on $\thh(ku_p)$. After this, we obtain that $b$ represents a unit in $T(2)_*\ntc(ku_p)$ by using the logarithmic THH computations of Rognes,  Sagave and Schlichtkrull \cite{rognes2018logthhofku}, see Section \ref{sec topological cyclic homology of kup and ku mod p}. From this, it follows easily that $b$ is indeed a unit of weight $1$ in $T(2)_*\tc(ku_p)$. This provides $T(2)_*\tc(ku_p)$, i.e.\  Theorem \ref{theo Ausoni algebraic k theory of ku}, since \[T(2)_* \tc(ku_p)_0 \cong T(2)_*\tc(\ell_p)\] and that $T(2)_*\tc(ku_p)$ is periodic in the weight direction due to the unit $b$ of weight $1$ in $T(2)_*\tc(ku_p)$.

To compute $T(2)_*\kth(ku/p)$, i.e.\ to prove Theorem \ref{theo kth of ku mod p}, we construct $ku/p$ as an algebra over $ku_p$ in the $\infty$-category of $p-1$-graded spectra in Section \ref{sec einfty grading on ku}, i.e.\ $ku/p$ is a $p-1$-graded $ku_p$-algebra. Furthermore, we show that \[ku/p \simeq \ell/p(\sqrt[p-1]{v_1})\] as $p-1$-graded $E_1$-rings. As a result, we obtain that $\tc(ku/p)$ is a $p-1$-graded $\tc(ku_p)$-module (i.e.\ a module over $\tc(ku_p)$ in $\fun(\z/(p-1),\sp)$) and that \[\tc(ku/p)_0\simeq \tc(\ell/p).\] After this, Theorem \ref{theo kth of ku mod p} follows by noting that $T(2)_*\tc(ku_p)$ contains a unit of weight $1$ and therefore, every $p-1$-graded module over it, such as $T(2)_* \tc(ku/p)$, is periodic in the weight direction, see Section \ref{sec topological cyclic homology of kup and ku mod p}. 

\section{Graded ring spectra}\label{sec graded ring spectra}

Here, we set our conventions for graded objects in a  presentably symmetric monoidal stable $\infty$-category $\C$. We start by noting that there is an equivalence of $\infty$-categories
\[\fun(\z/m,\C)\simeq \prod_{i \in \z/m} \C.\]
We call an object $C$ of $\fun(\z/m,\C)$ an $m$-graded object of $\C$ and let $C_i$ denote $C(i)$. If $C$ is an $E_k$-algebra in $\fun(\z/m,\C)$, we say $C$ is an $m$-graded $E_k$-algebra in $\C$. If $C'$ is an $E_{k-1}$ $C$-algebra in $\fun(\z/m,\C)$, we say $C'$ is an $m$-graded $C$-algebra. For an $M \in  \fun(\z/m,\sp)$, we say $M$ is an $m$-graded spectrum and an $E_k$-algebra in $\fun(\z/m,\sp)$ is called an $m$-graded  $E_k$-ring. For $m=0$, we drop $m$ and talk about graded spectra, graded $E_k$-rings and so on.

The map $\z/m\to 0$ provides a symmetric monoidal left adjoint functor
\[D \co \fun(\z/m, \C) \to \fun(0,\C)\simeq \C\]
given by left Kan extension \cite[Corollary 3.8]{nikolaus2016stablemultyoneda}. We call $D(C)$ the underlying object of $C$ and this is given by the formula 
\[D(M) \simeq \coprod_{i\in \z/m} M_i.\]
We often omit $D$ in our notation.

We say an $m$-graded object $C$ of $\C$ is concentrated in weight $0$ if $C_i\simeq 0$ whenever $i \neq 0$. The inclusion $0\to \z/m$ provides another  adjunction:
\begin{equation*} \label{diag restriction and kan extension}
 \begin{tikzcd}
 {  \C\simeq \on{Fun}(0,\C)  } \arrow[r,"F_0",shift left]
 & {  \on{Fun}(\z/m,\C)  }\arrow[l,"G_0", shift left] 
 \end{tikzcd}
 \end{equation*}
where the left adjoint $F_0$ is symmetric monoidal and given by left Kan extension and $G_0$ is given by restriction, i.e.\ $G_0(C) = C_0$. For  $C\in \C$, $F_0(C)$ provides $C$ as an $m$-graded object concentrated in weight $0$. We often omit $F_0$ and for a given $C \in \fun(\z/m,\C)$, we denote the $m$-graded object $F_0(C_0)$ by $C_0$.

For an $m$-graded $E_k$-ring $A$, the counit of this adjunction provides a map 
$A_0 \to A$ of $m$-graded $E_k$-rings.  If $A$ is  concentrated in weight $0$, the counit of $F_0\dashv G_0$ provides  an equivalence 
\begin{equation}\label{eq weight zero counit map is an equivalence}
    F_0G_0(A) \simeq A.
\end{equation} 

 \section{Complex $K$-theory spectrum as a $p-1$-graded $E_\infty$-ring}\label{sec einfty grading on ku}
 Here, we use the results of \cite{carmeli2021chromatic} to obtain a $p-1$-graded $E_\infty$-ring structure on $ku_p$. Furthermore, we show that the resulting $p-1$-graded $E_1$-ring structure on $ku_p$ agrees with that provided by the root adjunction methods of \cite{ausoni2022adjroot}. 
 
 Similarly, we construct a $2$-periodic $p-1$-graded Morava $K$-theory spectrum of height $1$, i.e.\ $ku_p/p$, as a $p-1$-graded $E_\infty$ $ku_p$-algebra. 
 \subsection{Complex $K$-theory spectrum}

 Recall that $L_p \to KU_p$ is a Galois extension with Galois group $\Delta := \z/(p-1)$ in the sense of \cite[Section 5.5.4]{rognes2008galois}.  Taking connective covers, one obtains that $ku_p$ is a $\Delta$-equivariant  $E_\infty$ $\ell_p$-algebra and there is an equivalence:
 \[\ell_p \simeq ku_p^{h\Delta}.\]

  We consider $\Delta$ as the cyclic subgroup $\z/(p-1)$ of $\zp^{\times}$. Let $\delta$ denote a generator of $\Delta$ and $\alpha$ denote the corresponding element in $\zp$. On $\pis ku_p \cong \zp[u_2]$, we have $\pis (\delta) (u_2^i) = \alpha^i u_2^i$. Note that since $\lv \Delta \rv = p-1$ and since $ku_p$ is $p$-local, the homotopy fixed points above can be computed by taking fixed points at the level of homotopy groups. 
  
  The $p-1$-graded $E_\infty$ $\ell_p$-algebra structure on $ku_p$ is a consequence of the Fourier transform developed in \cite[Section 3]{carmeli2021chromatic}. Due to \cite[Corollary 3.9]{carmeli2021chromatic}, $\ell_p$ admits a primitive $p-1$-root of unity in the sense of \cite[Definition 3.3]{carmeli2021chromatic} which we can choose to be $\alpha \in \zp \cong \pi_0 \ell_p$ above. Let $\Delta^*$ denote the Pontryagin dual 
  \[\Delta^* := \hom(\Delta, \z/(p-1))\] 
  of $\Delta$ for which we have $\Delta^*\cong \z/(p-1)$.  In this situation, \cite[Proposition 3.13]{carmeli2021chromatic} provides a symmetric monoidal functor: 
 \begin{equation}\label{eq fourier transform}
     \mathfrak{F} \co \lmod_{\ell_p}^{B\Delta} \to \fun(\z/(p-1), \lmod_{\ell_p})
 \end{equation}
  from the $\infty$-category of $\Delta$-equivariant $\ell_p$-modules to the $\infty$-category of $p-1$-graded $\ell_p$-modules. Indeed, this functor is an equivalence of $\infty$-categories. For the $E_\infty$-algebra $\kup$ in $\lmod_{\ell_p}^{B\Delta}$, we will show that $\mathfrak{F}(ku_p)$ provides the desired $p-1$-graded $E_\infty$ $\ell_p$-algebra structure on  $ku_p$.

First, we describe $\mathfrak{F}(ku_p)$ as a $p-1$-graded $\ell_p$-module. Indeed, $\mathfrak{F}$ provides the underlying eigenspectrum decomposition as described in \cite[Remark 3.14]{carmeli2021chromatic}. Namely, by \cite[Definition 3.12]{carmeli2021chromatic} we have 
\[\mathfrak{F}(ku_p)_i \simeq (\ell_p(-\varphi_{i}) \wdg_{\ell_p} \kup)^{h\Delta}\]
where $\ell_p(-\varphi_i)$ is given by $\ell_p$ as an $\ell_p$-module but $\delta$ acts through multiplication by $\alpha^{-i}$ on $\pis (\ell_p(-\varphi_i))$ \cite[Definition 3.10]{carmeli2021chromatic}; here, $\varphi_i$ is the map $\z/(p-1) \to \z/(p-1)$ that multiplies by $i$. Again, homotopy fixed points may be computed by taking fixed points at the level of homotopy groups and one observes that 
\[\pis\big (\mathfrak{F}(ku_p)_i\big)\cong \pis\big( (\ell_p(-\varphi_i) \wdg_{\ell_p} \kup)^{h\Delta}\big )\]
is precisely given by the eigenspace corresponding to $\alpha^i$ in $\pis ku_p$. This eigenspace is  $\pis (\Sigma^{2i}\ell_p) \subseteq \pis ku_p$. In particular, $\pis\big (\mathfrak{F}(ku_p)_i\big)$ is free of rank $1$ as a $\pis \ell_p$-module and therefore, we obtain equivalences of $\ell_p$-modules  
\[\mathfrak{F}(ku_p)_i \simeq \Sigma^{2i}\ell_p.\]

Therefore, the underlying $\ell_p$-module of the $p-1$-graded $\ell_p$-module $\mathfrak{F}(ku_p)$ is given by 
\begin{equation}\label{eq underlying spectrum of graded ku is ku}
    D(\mathfrak{F}(ku_p)) \simeq \bigvee_{0\leq i <p-1}\Sigma^{2i} \ell_p \simeq ku_p
\end{equation}
as desired. The following proposition identifies the underlying $E_\infty$ $\ell_p$-algebra of $\mathfrak{F}(ku_p)$ with $ku_p$. 
\begin{prop}\label{prop specific graded einfty on kup}
There is an equivalence of $E_\infty$ $\ell_p$-algebras
\[D(\mathfrak{F}(ku_p)) \simeq ku_p.\]
\end{prop}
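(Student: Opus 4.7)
The plan is to promote the equivalence of $\ell_p$-modules $D(\mathfrak{F}(ku_p)) \simeq ku_p$ from equation \eqref{eq underlying spectrum of graded ku is ku} to an equivalence of $E_\infty$ $\ell_p$-algebras. To do this, I would exhibit a natural equivalence of symmetric monoidal functors $D \circ \mathfrak{F} \simeq U$, where $U\co \lmod_{\ell_p}^{B\Delta} \to \lmod_{\ell_p}$ is the symmetric monoidal functor that forgets the $\Delta$-action. Granting this equivalence of symmetric monoidal functors, applying it to the $E_\infty$-algebra $ku_p$ in $\lmod_{\ell_p}^{B\Delta}$ yields the desired equivalence $D(\mathfrak{F}(ku_p))\simeq U(ku_p) = ku_p$ of $E_\infty$ $\ell_p$-algebras.

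To verify the claim that $D\circ \mathfrak{F} \simeq U$, I would first observe that all three functors in sight are symmetric monoidal: $\mathfrak{F}$ by \cite[Proposition 3.13]{carmeli2021chromatic}, $D$ by its construction as the symmetric monoidal left Kan extension along $\z/(p-1)\to 0$ in Section \ref{sec graded ring spectra}, and $U$ because tensor products of $\Delta$-equivariant $\ell_p$-modules are computed on underlying $\ell_p$-modules with diagonal $\Delta$-action. Next, one checks the equivalence on underlying functors: using the eigenspace formula from \cite[Remark 3.14]{carmeli2021chromatic}, for every $M\in \lmod_{\ell_p}^{B\Delta}$ we have
\[D(\mathfrak{F}(M)) \simeq \bigvee_{i\in \z/(p-1)} \bigl(\ell_p(-\varphi_i)\wdg_{\ell_p} M\bigr)^{h\Delta}.\]
Since $\lv \Delta \rv = p-1$ is invertible in $\pi_0 \ell_p$, the homotopy fixed points compute the $\alpha^i$-eigenspaces of the $\Delta$-action on $M$, and the sum over all $i$ recovers $U(M)$; this identification is natural in $M$.

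The main obstacle is to verify that this pointwise natural equivalence lifts to an equivalence of symmetric monoidal functors, not merely an equivalence of the underlying functors of $\infty$-categories. One way to handle this is to observe that both $D\circ \mathfrak{F}$ and $U$ are colimit-preserving symmetric monoidal functors $\lmod_{\ell_p}^{B\Delta}\to \lmod_{\ell_p}$ sending the unit to the unit, and to appeal to the compatibility built into the construction of $\mathfrak{F}$ in \cite{carmeli2021chromatic}, where the symmetric monoidal equivalence is realized so that pulling back the underlying-object functor on the Day convolution side yields precisely the forgetful functor. Alternatively, one can verify multiplicativity of the eigenspace decomposition by hand, reducing to the statement that $\Delta$ acts on $ku_p$ through $E_\infty$ $\ell_p$-algebra maps, so that the isotypic decomposition respects the multiplication on underlying modules.
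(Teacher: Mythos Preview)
Your approach is genuinely different from the paper's. The paper does not compare $D\circ\mathfrak{F}$ with the forgetful functor abstractly; it argues directly with $ku_p$. Using that $\mathfrak{F}$ is strong monoidal, one has $\pis D(\mathfrak{F}(ku_p))\cong \pis ku_p$ as $\pis\ell_p$-algebras; inverting $v_1$ and invoking that $\pis L_p \to \pis KU_p$ is an \'etale extension of Dirac rings \cite{hesselholt2022dirac} (alternatively, the obstruction theory of \cite{baker2007realizibilityofalgebraicgalois}) upgrades this to an equivalence $D(\mathfrak{F}(ku_p))[v_1^{-1}]\simeq KU_p$ of $E_\infty$ $L_p$-algebras, and taking connective covers finishes. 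Your route, by contrast, would prove a functorial statement valid for every $E_\infty$-algebra in $\lmod_{\ell_p}^{B\Delta}$, trading the specific input about $ku_p$ for a structural compatibility of the Fourier transform.

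The obstacle you flag is real and not quite dispatched by either of your two suggestions as written. Two colimit-preserving symmetric monoidal functors that agree as plain functors and on units need not agree as symmetric monoidal functors, so your first suggestion requires a precise statement from \cite{carmeli2021chromatic} rather than a general principle; and your second suggestion, as phrased, only yields compatibility at the level of homotopy rings, which is weaker than an $E_\infty$ identification. One clean way to close the gap is to argue via $\mathfrak{F}^{-1}$: in the setup of \cite{carmeli2021chromatic} this is the symmetric monoidal colimit-preserving functor classified by the character map $\z/(p-1)\to \lmod_{\ell_p}^{B\Delta}$, $i\mapsto \ell_p(\varphi_i)$; since $U(\ell_p(\varphi_i))\simeq \ell_p$ as a symmetric monoidal natural identification, postcomposing with $U$ is classified by the constant map $\z/(p-1)\to \lmod_{\ell_p}$ at the unit, which is exactly $D$. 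That produces $D\circ\mathfrak{F}\simeq U$ as symmetric monoidal functors and completes your argument.
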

\begin{proof}
Using the strong monoidality of $\mathfrak{F}$, one observes that there is an isomorphism $\pis  \big(D(\mathfrak{F}(ku_p))\big) \cong \pis ku_p$ of $\pis \ell_p$-algebras. Inverting $v_1 \in \pis \ell_p$, we obtain the following commuting diagram of $E_\infty$-rings.
\begin{equation}\label{diag inverting vone in lp and kup}
\begin{tikzcd}
\ell_p \ar[d]\ar[r] &D(\mathfrak{F}(ku_p)) \ar[d]\\
L_p \ar[r] & D(\mathfrak{F}(ku_p))[v_1^{-1}]
\end{tikzcd}
\end{equation}

There is an isomorphism of $\pis L_p$-algebras 
\begin{equation*}\label{eq another isomorphism at homotopy rings}
    \pis \big(D(\mathfrak{F}(ku_p))[v_1^{-1}] \big) \cong \pis KU_p.
\end{equation*}
Since $\pis L_p \to \pis KU_p$ is an \'etale map of Dirac rings \cite[Example 4.32]{hesselholt2022dirac}, we deduce by \cite[Theorem 1.10]{hesselholt2022dirac} that the isomorphism above lifts to an equivalence of $E_\infty$ $L_p$-algebras
\[D(\mathfrak{F}(ku_p))[v_1^{-1}] \simeq KU_p.\]
Alternatively, this also follows by  \cite[Proposition 2.2.3]{baker2007realizibilityofalgebraicgalois}.

\begin{comment}
 Since $\pis KU_p$ is a free $\pis L_p$-module, the ring $\pis KU_p$ lifts to a unique  monoid in $\text{Ho}(\lmod_{L_p})$; this follows by the Ext and Tor spectral sequences of \cite[Theorem \RomanNumeralCaps{4}.4.1]{elmendorf2007rings}. Therefore, it follows by \eqref{eq another isomorphism at homotopy rings} that there is an equivalence
\[D(\mathfrak{F}(ku_p))[v_1^{-1}] \simeq KU_p\]
of monoids in $\text{Ho}(\lmod_{L_p)})$. 

Due to   \cite[Proposition 2.2.3]{baker2007realizibilityofalgebraicgalois}, which is an application of Robinson's obstruction theory for $E_\infty$-rings \cite{robinson2003gammahomologyobstructioneinfty}, there is a unique $E_\infty$ $L_p$-algebra structure on $KU_p$ compatible with its monoid structure in $\text{Ho}(\lmod_{L_p})$.  This shows that there is an equivalence of $E_\infty$ $L_p$-algebras
\[D(\mathfrak{F}(ku_p))[v_1^{-1}] \simeq KU_p.\]
\end{comment}
Since the right hand vertical arrow in Diagram \eqref{diag inverting vone in lp and kup} is a connective cover, the universal property of connective covers (in $E_\infty$ $\ell_p$-algebras) provide an equivalence 
\[D(\mathfrak{F}(ku_p)) \simeq ku_p\]
of $E_\infty$ $\ell_p$-algebras. 
\end{proof}
  
 \begin{theo}\label{theo kup is a p-1 graded einfty lp algebra}
  The  $E_\infty$ $\ell_p$-algebra  $ku_p$ admits the structure of a $p-1$-graded $E_\infty$ $\ell_p$-algebra such that 
  \[(ku_p)_i \simeq \Sigma^{2i}\ell_p.\]
 \end{theo}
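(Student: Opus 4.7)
The plan is to simply package the work already done in the preceding discussion and in Proposition \ref{prop specific graded einfty on kup} as a transport of structure.

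First, I would observe that $ku_p$ is an $E_\infty$-algebra in the $\Delta$-equivariant $\ell_p$-module category $\lmod_{\ell_p}^{B\Delta}$, because $\ell_p \to ku_p$ is a faithful $\Delta$-Galois extension in the sense of Rognes. Applying the symmetric monoidal Fourier transform functor $\mathfrak{F}$ of \eqref{eq fourier transform} (which exists because $\ell_p$ contains the primitive $(p-1)$-th root of unity $\alpha$) produces $\mathfrak{F}(ku_p)$ as an $E_\infty$-algebra in $\fun(\z/(p-1),\lmod_{\ell_p})$; that is, a $p-1$-graded $E_\infty$ $\ell_p$-algebra.

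Next, I would read off the weight pieces from the eigenspectrum description of $\mathfrak{F}$ given in \cite[Definition 3.12 and Remark 3.14]{carmeli2021chromatic}. The calculation carried out just before Proposition \ref{prop specific graded einfty on kup} identifies $\pi_*\mathfrak{F}(ku_p)_i$ with the $\alpha^i$-eigenspace $\pi_*(\Sigma^{2i}\ell_p) \subseteq \pi_*ku_p$, and since this is free of rank one over $\pi_*\ell_p$ the module $\mathfrak{F}(ku_p)_i$ is equivalent to $\Sigma^{2i}\ell_p$ as an $\ell_p$-module.

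Finally, I would invoke Proposition \ref{prop specific graded einfty on kup}, which provides an equivalence $D(\mathfrak{F}(ku_p)) \simeq ku_p$ of $E_\infty$ $\ell_p$-algebras. Transporting the $p-1$-graded $E_\infty$-structure on $\mathfrak{F}(ku_p)$ along this equivalence endows $ku_p$ with a $p-1$-graded $E_\infty$ $\ell_p$-algebra structure whose weight $i$ piece is $\Sigma^{2i}\ell_p$, as required. There is no real obstacle here; the entire content of the theorem has been arranged into place by the preceding subsection, and the only task is to assemble symmetry, transport of structure along an equivalence of underlying $E_\infty$-algebras, and the eigenspace identification.
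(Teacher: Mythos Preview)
Your proposal is correct and mirrors the paper's own argument exactly: the theorem is stated without a separate proof because the preceding discussion and Proposition \ref{prop specific graded einfty on kup} already supply all of its content, and the paper simply declares that the $p-1$-graded structure on $ku_p$ is $\mathfrak{F}(ku_p)$.
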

  
  \begin{rema}
   From this point, when we mention $ku_p$ as a  $p-1$-graded $E_\infty$ $\ell_p$-algebra, we mean $\mathfrak{F}(ku_p)$. 
  \end{rema}
  
  \begin{rema}
  We would like to thank Tommy Lundemo for pointing out that it should also be possible to construct a $p-1$-graded $E_\infty$-algebra structure on $ku_{(p)}$ using  \cite[Proposition 4.15]{sagave2014logarithmic} which states that $ku_{(p)}$ can be obtained from $\ell$ via base change through the polynomial like $E_\infty$-algebras of  \cite[Construction 4.2]{sagave2014logarithmic}.
  \end{rema}
 \subsection{Adjoining roots to ring spectra}\label{subsec adj roots to ring spectra}
 Here, we summarize the root adjunction method developed in \cite[Construction 4.6]{ausoni2022adjroot}. 
 
 Let $k>0$ be even and let $\sphzk$ denote the free $E_1$-ring spectrum on $\sph^k$ (this is denoted by $\sphsk$ in \cite{ausoni2022adjroot}). Taking $z_k$ to be of weight $1$, $\sphzk$ admits the structure of a graded $E_2$-algebra \cite[Construction 3.3]{ausoni2022adjroot}. %For an $E_2$-ring spectrum $X$ whose odd homotopy groups are trivial and a given $x \in \pi_{mk} X$, this even cell decomposition allows us to lift the  $E_1$-ring map $\sph[z_{mk}] \to X$ carrying $z_{mk}$ to $x$ to an $E_2$-ring map. 
 By left Kan extending $\sphzk$ through $\z\to \z/m$, one obtains an $m$-graded $E_2$-ring that we also call $\sphzk$ with $z_k$ in weight $1$.  

 Let $A$ be an $E_1$ $\sph[z_{mk}]$-algebra where $z_{mk}$ acts through $a \in \pi_{mk}A$. Using $F_0$, we obtain a map of $m$-graded $E_2$-rings concentrated in weight $0$: \[\sphzmk \to A.\] 
 
 Furthermore, \cite[Proposition 3.9]{ausoni2022adjroot} provides a map $\sph[z_{mk}] \to \sph[z_k]$ of $m$-graded $E_2$-rings carrying the weight $0$ class $z_{mk}$ to $z_k^m$ in homotopy. Finally, the $m$-graded $E_1$-ring $A(\sqrt[m]{a})$ is defined via the following relative smash product in $m$-graded spectra:
 \begin{equation}\label{eq defining root adjunction}
 A(\sqrt[m]{a}) :=A \wdg_{\sph[z_{mk}]} \sph[z_k].
 \end{equation}

This comes equipped with a map $A\to  \artma$ of  $m$-graded $E_1$-rings given by the counit of the adjunction $F_0 \dashv G_0$.

It follows by the K\"unneth spectral sequence that at the level of homotopy rings, one obtains precisely the desired root adjunction:
\begin{equation}\label{eq homotopy ring after root adjunction}
    \pis\big( \artma\big) \cong \pis( A) [y]/(y^m-a).
\end{equation}
The class $y$ above comes from $z_k \in \pis \sph[z_k]$ and therefore it is of weight $1$. Furthermore, $\pis A \subseteq \pis \big(\artma\big)$ is the subring of weight $0$ elements.

%Applying $F_0$, one may consider $X$ as an $m$-graded $E_1$ $\sph[z_{mk}]$-algebra  concentrated in weight $0$. Furthermore, we equip $\sphzk$ with an $m$-graded $E_2$-ring structure by left Kan extending through $\z\to \z/m$; in particular, $z_k$ is of weight $1$ in this $m$-grading. As a result, $\xrtmx$ carries an $m$-graded $E_1$-ring structure by taking the smash product in 
%\eqref{eq defining root adjunction} at the level of $m$-graded spectra. If we start with a map of  $E_2$-ring spectra $\sph[z_{mk}] \to X$, then $\xrtmx$ is an $m$-graded $X$-algebra. In this $m$-grading, 

\begin{lemm}\label{lemm m graded e2 rings have even cell decomposition}
Let $k\geq0$ be even. The $m$-graded $E_2$-ring obtained from the graded $E_2$-ring $\sph[z_k]$ by left Kan extending through $\z \to \z/m$ admits an even cell decomposition. 
\end{lemm}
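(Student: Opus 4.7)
The plan is a direct wedge decomposition. By \cite[Construction 3.3]{ausoni2022adjroot}, $\sph[z_k]$ is by construction the free graded $E_1$-ring on a generator of weight $1$ and internal degree $k$. Unwinding the free $E_1$-algebra formula in the Day-convoluted category $\fun(\z,\sp)$, its underlying graded spectrum decomposes as
$$\sph[z_k]\simeq \bigvee_{n\geq 0} F_n(S^{nk}),$$
where $F_n\co \sp \to \fun(\z,\sp)$ denotes left Kan extension along $\{n\}\hookrightarrow \z$, so that $F_n(S^{nk})$ is a single cell sitting in weight $n$ and internal degree $nk$. Since $k$ is even, every $nk$ is even, and this already exhibits $\sph[z_k]$ as a wedge of even graded cells in $\fun(\z,\sp)$.

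Next, let $q\co \z\to \z/m$ denote the quotient. The induced left Kan extension $q_!\co \fun(\z,\sp)\to \fun(\z/m,\sp)$ is a symmetric monoidal left adjoint of presentably symmetric monoidal stable $\infty$-categories (it is precisely the functor used to produce the $m$-graded $E_2$-ring in the statement) and so commutes with all colimits; in particular it preserves wedges and the property of being an $E_2$-algebra. Moreover, composition of Kan extensions gives $q_!\circ F_n\simeq F_{q(n)}$, where on the right $F_i\co \sp\to \fun(\z/m,\sp)$ is the analogous left Kan extension along $\{i\}\hookrightarrow \z/m$. Applying $q_!$ to the previous decomposition therefore produces
$$q_!\sph[z_k]\simeq \bigvee_{n\geq 0} F_{q(n)}(S^{nk}) \quad \text{in } \fun(\z/m,\sp),$$
an $m$-graded wedge all of whose cells lie in even internal degree. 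This is the desired even cell decomposition.

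I do not anticipate a substantive obstacle; the only point that needs care is matching conventions. By an even cell decomposition of an $m$-graded spectrum, I take the standard notion of a presentation as a filtered colimit (or, in the strongest case, a wedge) built from free $m$-graded cells of the form $F_i(S^{2j})$ with $i\in\z/m$ and $j\in\z$. Since the construction above exhibits $q_!\sph[z_k]$ as an honest wedge of such cells with no attaching maps whatsoever, the requirement is satisfied trivially under any such convention.
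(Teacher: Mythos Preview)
Your decomposition of the underlying $m$-graded spectrum is correct, but it does not prove the lemma as intended: the phrase ``even cell decomposition'' here, following \cite[Proposition 3.4 and Lemma 3.6]{ausoni2022adjroot}, means an \emph{$E_2$-algebra} cell decomposition --- a presentation of the $m$-graded $E_2$-ring as a filtered colimit of free $m$-graded $E_2$-algebras with $E_2$-cells attached along even spheres. This is precisely the structure needed for the lemma's only application in the paper: immediately afterward, the lemma is invoked to produce a $p-1$-graded $E_2$-ring map $\sph[z_2]\to ku_p$ by obstruction theory, which requires building the map cell-by-cell in the category of $E_2$-algebras. A wedge splitting of the underlying graded spectrum, however clean, gives no handle on constructing $E_2$-maps out of $\sph[z_k]$.

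The paper's own argument proceeds differently: it takes as input the even $E_2$-cell decomposition of the $\z$-graded $E_2$-ring $\sph[z_k]$ from \cite[Proposition 3.4]{ausoni2022adjroot}, and then observes that left Kan extension along $\z\to\z/m$ is a symmetric monoidal left adjoint, hence preserves free $E_2$-algebras, $E_2$-cell attachments (pushouts along free maps), and filtered colimits. This transports the $E_2$-cell structure intact. Your argument could be repaired along the same lines, but as written it addresses the wrong category.
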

\begin{proof}
This follows as in the proof of \cite[Lemma 3.6]{ausoni2022adjroot}. The $E_2$-ring $\sphzk$ admits an even cell decomposition \cite[Proposition 3.4]{ausoni2022adjroot}, i.e.\ it is given by a filtered colimit of graded $E_2$-rings starting with the  free graded $E_2$-algebra on $\sph^{k}$ and the later stages  given by attaching an even $E_2$-cell to the former. Note that left Kan extension through $\z \to \z/m$  is left adjoint and symmetric monoidal. Therefore, it preserves free algebras, even cell attachments and filtered colimits. This  provides the $m$-graded $E_2$-ring $\sph[z_k]$ with an even cell decomposition.  
\end{proof}

 \subsection{Complex $K$-theory spectrum via root adjunction}\label{subsec ku via root adjunction}
 
Here, we show that the $p-1$-graded $E_1$ $\ell_p$-algebra structure on $ku_p$ provided by Proposition \ref{prop specific graded einfty on kup} agrees with that obtained by  adjoining a root to $v_1 \in \pis \ell_p$. 

Let $\sph[z_2]$ be the $p-1$-graded $E_2$-algebra with $z_2$ in weight $1$ as mentioned earlier. Since $\pis \ell_p$ is concentrated in even degrees, Lemma \ref{lemm m graded e2 rings have even cell decomposition}  provides a $p-1$-graded $E_2$-ring map 
\begin{equation}\label{eq free spherical to kup}
    \sphztwo \to ku_p
\end{equation}
 that carries $z_2$ to $u_2$ in homotopy. 
\begin{comment}
We consider  $\sph[z_{2(p-1)}]$ as a $p-1$-graded $E_2$-ring concentrated in weight $0$ by using $F_0$. Furthermore,  \cite[Proposition 3.9]{ausoni2022adjroot} provides a  $p-1$-graded $E_2$-map $\sphzpmone \to \sphztwo$ carrying $z_{2(p-1)}$ to $z_2^{p-1}$. By inspection, one observes that this map induces an equivalence 
\[F_0G_0(\sphzpmone) \simeq F_0G_0(\sphztwo).\] 
Combining this equivalence with \eqref{eq weight zero counit map is an equivalence}, we obtain an equivalence of $p-1$-graded $E_2$-rings: 
\[\sphzpmone \simeq F_0G_0(\sphztwo). \]
\end{comment}
The aforementioned $p-1$-graded $E_2$-map $\sphzpmone \to \sphztwo$ carrying the weight $0$ class $z_{2(p-1)}$ to $z_2^{p-1}$ induces  the second equivalence below.
\[\sphzpmone \simeq F_0G_0(\sphzpmone) \simeq F_0G_0(\sphztwo) \]
The first equivalence follows by \eqref{eq weight zero counit map is an equivalence} and these are equivalences of $p-1$-graded $E_2$-rings.

 Similarly, the $p-1$-graded $E_\infty$-ring map $\ell_p \to ku_p$ (with $\ell_p$ concentrated in weight $0$), provides an equivalence $\ell_p \simeq F_0G_0(ku_p)$ of $p-1$-graded $E_\infty$-rings concentrated in weight $0$. We obtain the following commuting diagram of $p-1$-graded $E_2$-rings by applying the natural transformation $F_0G_0 \to id$ to \eqref{eq free spherical to kup} and using the last two equivalences we mentioned above. 
\begin{equation}\label{diag maps from the weight 0 section of sph z2 to kup}
\begin{tikzcd}
\sphzpmone \ar[r] \ar[d] & \ell_p \ar[d]\\
\sphztwo \ar[r] &ku_p
\end{tikzcd}
\end{equation}
In particular, the map $\ell_p \to ku_p$ is a map of $p-1$-graded $\sphzpmone$-algebras. The extension/restriction of scalars adjunction induced by the left hand vertical map provides a map 
\[\ell_p \wdg_{\sphzpmone} \sphztwo \xrightarrow{\simeq} ku_p \]
of $p-1$-graded $E_1$ $\sphztwo$-algebras. Note that the left hand side above is a form of $\ell_p(\sqrt[p-1]{v_1})$ as in \eqref{eq defining root adjunction}. Considering  \eqref{eq homotopy ring after root adjunction}, one observes that the map above is an equivalence as desired. This proves the following. 
\begin{prop}\label{prop adj root to lp is kup}
Let $ku_p$ denote a $p-1$-graded $E_1$-ring provided by Theorem \ref{theo kup is a p-1 graded einfty lp algebra}. Then there is an equivalence of $p-1$-graded $E_1$-rings
\[ku_p \simeq \ell_p(\sqrt[p-1]{v_1})\]
for the form of $\ell_p(\sqrt[p-1]{v_1})$ constructed above.
\end{prop}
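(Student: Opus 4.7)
The plan is to construct a comparison map from the root adjunction $\ell_p(\sqrt[p-1]{v_1}) := \ell_p \wdg_{\sphzpmone} \sphztwo$ to the $p-1$-graded $E_1$-ring $ku_p$ supplied by Theorem \ref{theo kup is a p-1 graded einfty lp algebra}, and then verify it is an equivalence at the level of homotopy rings via the K\"unneth spectral sequence. Since the formula \eqref{eq homotopy ring after root adjunction} pins down $\pis \ell_p(\sqrt[p-1]{v_1})$ as $\pis(\ell_p)[y]/(y^{p-1}-v_1)$ with $y$ of weight $1$ and degree $2$, a homotopy-group comparison will suffice once a map of $p-1$-graded $E_1$ $\sphztwo$-algebras is in hand.

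First I would invoke Lemma \ref{lemm m graded e2 rings have even cell decomposition} to lift the class $u_2 \in \pi_2 ku_p$ of weight $1$ to a $p-1$-graded $E_2$-ring map $\sphztwo \to ku_p$ sending $z_2 \mapsto u_2$. Using the $p-1$-graded $E_2$-map $\sphzpmone \to \sphztwo$ of \cite[Proposition 3.9]{ausoni2022adjroot} (which sends $z_{2(p-1)}$ to $z_2^{p-1}$) together with \eqref{eq weight zero counit map is an equivalence}, identify $\sphzpmone$ with $F_0G_0(\sphztwo)$, and identify $\ell_p$ with $F_0G_0(ku_p)$ in the same fashion. Applying the natural transformation $F_0G_0 \to \id$ to the map $\sphztwo \to ku_p$ then yields a commuting square of $p-1$-graded $E_2$-rings exhibiting $\ell_p \to ku_p$ as a map of $p-1$-graded $\sphzpmone$-algebras.

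Next, the extension/restriction of scalars adjunction along $\sphzpmone \to \sphztwo$ gives the desired comparison map
\[
\ell_p \wdg_{\sphzpmone} \sphztwo \longrightarrow ku_p
\]
of $p-1$-graded $E_1$ $\sphztwo$-algebras. To conclude, applying \eqref{eq homotopy ring after root adjunction} identifies the homotopy ring of the source with $\pis(\ell_p)[y]/(y^{p-1}-v_1)$ with $\lv y \rv =2$ and $y$ of weight $1$; the comparison map sends $y$ to $u_2$, which by the classical identification $v_1 = u_2^{p-1} \in \pi_* \ell_p \subset \pi_* ku_p$ gives an isomorphism onto $\pis ku_p \cong \zp[u_2]$.

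The main obstacle is the coherence of all the algebra structures in the $p-1$-graded setting: one must ensure that the cell-based map $\sphztwo \to ku_p$ really lives in $p-1$-graded $E_2$-rings (not just at an underlying level), that the square of $p-1$-graded $E_2$-rings commutes so that the extension-of-scalars map is well-defined, and that the map of $E_1$-algebras above genuinely respects the weight grading. All of these reduce, via $F_0 \dashv G_0$ and the even-cell decomposition of Lemma \ref{lemm m graded e2 rings have even cell decomposition}, to checks that can be performed in weight $0$ or on free cells, where no further obstructions arise.
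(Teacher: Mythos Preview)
Your proposal is correct and follows essentially the same route as the paper: construct the $p-1$-graded $E_2$-map $\sphztwo \to ku_p$ via the even cell decomposition, use $F_0G_0 \to \id$ to assemble the commuting square of $p-1$-graded $E_2$-rings exhibiting $\ell_p \to ku_p$ as an $\sphzpmone$-algebra map, extend scalars along $\sphzpmone \to \sphztwo$, and verify the resulting map is an equivalence using \eqref{eq homotopy ring after root adjunction}. The only difference is cosmetic: the paper records these steps in the text preceding the proposition rather than as a separate proof, and checks the equivalence on homotopy groups with slightly less explicit bookkeeping than you provide.
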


\subsection{Two periodic Morava K-theory as a $p-1$-graded $ku_p$-algebra} \label{subsec two periodic morava kthry via root adjunction}
Using the even cell decomposition of $\sph[z_0]$, we obtain a map $\sph[z_0] \to \ell_p$ of $E_2$-rings that carries $z_0$ to $p$ in homotopy. Through this, we define the connective first Morava $K$-theory $k(1)\simeq \ell/p$ as an $E_1$ $\ell_p$-algebra as follows:
\[\ell/p:= \sph \wdg_{\sph[z_0]} \ell_p.\]
Here, we make use of the $E_\infty$ map $\sph[z_0] \to \sph$ sending $z_0$ to $0$; this is the weight $0$-Postnikov truncation of $\sph[z_0]$ (\cite[Lemma B.0.6]{hahn2020redshift}); alternatively, this is the map  $\Sigma^\infty_+ \mathbb{N} \to \Sigma^\infty_+ *$. Using $F_0$, we consider $\ell/p$ as a $p-1$-graded $\ell_p$-algebra concentrated in weight $0$.

 We define the connective two periodic first Morava $K$-theory  $ku/p$ as a $p-1$-graded $ku_p$-algebra as follows. 
\[ku/p := \ell/p \wdg_{\ell_p} ku_p\]
\begin{prop}\label{prop two periodic morava kthry is given by root adjunction}
There is an equivalence of $p-1$-graded $E_1$-rings
\[ku/p \simeq \ell/p(\sqrt[p-1]{v_n})\]
for some form of $\ell/p(\sqrt[p-1]{v_n})$.
\end{prop}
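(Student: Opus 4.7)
The plan is to combine Proposition \ref{prop adj root to lp is kup} with the definition of $ku/p$ given just above the statement, and then invoke associativity of the relative smash product in $p-1$-graded spectra.

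First I would substitute the equivalence $ku_p \simeq \ell_p(\sqrt[p-1]{v_1}) = \ell_p \wdg_{\sphzpmone} \sphztwo$ of $p-1$-graded $E_1$ $\sphztwo$-algebras from Proposition \ref{prop adj root to lp is kup} into the defining formula $ku/p := \ell/p \wdg_{\ell_p} ku_p$. This yields a sequence of equivalences of $p-1$-graded $E_1$-rings
\[ ku/p \simeq \ell/p \wdg_{\ell_p} \big( \ell_p \wdg_{\sphzpmone} \sphztwo \big) \simeq \ell/p \wdg_{\sphzpmone} \sphztwo, \]
where the second equivalence uses associativity of the relative smash product and the fact that the $\sphzpmone$-algebra structure on $ku_p$ in Diagram \eqref{diag maps from the weight 0 section of sph z2 to kup} factors through $\ell_p$. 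Since $\ell/p$ is a $p-1$-graded $\ell_p$-algebra, the composite $p-1$-graded $E_2$-ring map $\sphzpmone \to \ell_p \to \ell/p$ endows $\ell/p$ with the structure of an $E_1$ $\sphzpmone$-algebra concentrated in weight $0$.

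Next I would identify the image of the generator $z_{2(p-1)}$ under this composite. By construction (see Diagram \eqref{diag maps from the weight 0 section of sph z2 to kup}), $z_{2(p-1)}$ maps to $v_1 \in \pi_{2(p-1)} \ell_p$, and the subsequent reduction $\ell_p \to \ell/p$ carries $v_1$ to the corresponding (nonzero) element $v_1 \in \pi_{2(p-1)} \ell/p$. Therefore $\ell/p$ is an $E_1$ $\sphzpmone$-algebra on which $z_{2(p-1)}$ acts through $v_1$. Comparing with the defining formula \eqref{eq defining root adjunction}, we recognize
\[ \ell/p \wdg_{\sphzpmone} \sphztwo \]
as a form of $\ell/p(\sqrt[p-1]{v_1})$ in the sense of Section \ref{subsec adj roots to ring spectra}, and we obtain the desired equivalence of $p-1$-graded $E_1$-rings.

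The only real subtlety is ensuring that the smash product rearrangement genuinely takes place in the $\infty$-category of $p-1$-graded spectra, so that the weight grading is preserved throughout; this is guaranteed because every structure map in sight (the maps $\sphzpmone \to \ell_p$, $\ell_p \to \ell/p$, $\ell_p \to ku_p$, and $\sphzpmone \to \sphztwo$) is a map of $p-1$-graded $E_k$-rings in the sense already established in Sections \ref{subsec adj roots to ring spectra} and \ref{subsec ku via root adjunction}. Hence no new graded enhancement work is needed, and the proof reduces to the formal manipulation above.
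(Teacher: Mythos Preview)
Your proposal is correct and takes essentially the same approach as the paper. The paper phrases the argument by running the extension/restriction of scalars adjunction along $\sphzpmone \to \sphztwo$ directly on the map $\ell/p \to ku/p$ (exactly as in the proof of Proposition~\ref{prop adj root to lp is kup}) to produce a comparison map $\ell/p \wdg_{\sphzpmone} \sphztwo \to ku/p$ and then checks it is an equivalence, whereas you substitute the equivalence of Proposition~\ref{prop adj root to lp is kup} into the defining formula for $ku/p$ and use associativity of the relative smash product; the content and the resulting identification are the same.
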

\begin{proof}
There is a map of $p-1$-graded $\ell_p$-algebras
\[\ell/p \to ku/p:= \ell/p \wdg_{\ell_p} ku_p.\]
The target carries a $p-1$-graded $ku_p$-algebra structure compatible with its $p-1$-graded $\ell_p$-algebra structure. Forgetting through Diagram \eqref{diag maps from the weight 0 section of sph z2 to kup}, this is a map of $p-1$-graded $\sphzpmone$-algebras where the target carries a compatible $p-1$-graded $\sphztwo$-algebra structure. Extending scalars, we obtain a map of $p-1$-graded $E_1$-rings:
\[ \ell/p \wdg_{\sphzpmone} \sphztwo\xrightarrow{\simeq}ku/p,\]
which can easily shown to be an equivalence. 

\end{proof}

\section{Graded THH, $\ntc$ and TC}\label{sec graded THH, TC, neg TC}

Let $X$ be a $p$-local $m$-graded $E_k$-ring for $m>0$. In \cite[Appendix A]{antieau2020beilinson}, the authors prove that in this situation, $\thh(X)$ is an $m$-graded $E_{k-1}$-algebra in $\spcircle$. In particular, $\thh(X)$ admits an $S^1$-equivariant splitting into a coproduct of $m$-cofactors. Since the homotopy fixed points functor and the Tate construction commute with finite coproducts, this  splits  $\thh(X)^{hS^1}$ and $(\thh(X)^{tC_p})^{hS^1}$ as well. However, these splittings may not carry over to $\tc(X)$ in general since the canonical map is given by maps $\thh(X)_i^{hS^1} \to (\thh(X)_i^{tC_p})^{hS^1}$ that preserve the $m$-grading whereas the Frobenius map is given by maps
\[\thh(X)_{i} \to \thh(X)^{tC_p}_{pi}.\]
In particular, the fiber sequence defining $\tc(X)$
\[\tc(X) \to \bigvee_{i \in \z/m} \thh(X)_i^{hS^1}  \xrightarrow{\varphi_p^{hS^1} - can} \bigvee_{i \in \z/m} (\thh(X)_i^{tC_p})^{hS^1}\]
 may not split. On the other hand, if $m \mid p-1$, then $p=1$ in $\z/m$ and the Frobenius map also respects the $m$-grading. This results in a splitting of the fiber sequence defining $\tc(X)$ and hence a splitting of $\tc(X)$ into $m$-factors. Since we eventually work with $p-1$-graded spectra, this applies to our examples. In this section, we make this precise and deduce that $\tc(ku_p)$ is a $p-1$-graded $E_\infty$-ring and that  $\tc(ku/p)$ is a $p-1$-graded $\tc(ku_p)$-module. 

 For a given spectrum $F$, we let  $F^{\textup{triv}}$ denote the cyclotomic spectrum with trivial $S^1$-action and the Frobenius map given by the composite $F \to F^{hC_p}\to F^{tC_p}$ where the first  map comes from the fact that $F$ has the trivial action and the second map is the canonical map. 

\begin{defi}\label{defi of tc and trivial cyclotomic spectrum functor}
Since $\cycsp$ is a stable and presentably symmetric monoidal $\infty$-category, it follows by Shipley's theorem that there is a unique cocontinuous symmetric monoidal functor 
\[(-)^{\textup{triv}}\co \sp \to \cycsp\] given by the trivial cyclotomic structure described above. 

The right adjoint to $(-)^{triv}$ is the lax symmetric monoidal functor 
\[\tc \co \cycsp \to \sp\]
given by 
\[\tc(-) \simeq \map_{\cycsp}(\sph^{\textup{triv}},-).\]
\end{defi}

For the rest of this section, assume that $m$ is a positive integer such that $m \mid p-1$. Using the results of \cite[Appendix A]{antieau2020beilinson}  we prove in Appendix \ref{appendix graded tc} below that there is a symmetric monoidal functor 
\[\alg_{E_1}(\fun(\z/m,\sp)) \xrightarrow{\textup{THH}} \fun(\z/m,\cycsp).\]
Furthermore, it follows by \cite[Corollary 3.7]{nikolaus2016stablemultyoneda} that the levelwise application of $\tc$ provides a lax symmetric monoidal functor: 
\[\tc \co \fun(\z/m,\cycsp) \to \fun(\z/m,\sp),\]
that we also call $\tc$. In Appendix \ref{appendix graded tc}, we prove that the following diagram of lax symmetric monoidal functors commutes. 
\begin{equation}\label{diag for graded thh and graded tc}
\begin{tikzcd}
    \alg_{E_1}(\fun(\z/m,\sp)) \ar[d] \ar[r,"\thh"] & \fun(\z/m,\cycsp) \ar[r,"\tc"] \ar[d] & \fun(\z/m,\sp)\ar[d]\\
    \alg_{E_1}(\sp) \ar[r,"\thh"] & \cycsp \ar[r,"\tc"] & \sp
    \end{tikzcd}
\end{equation}
The vertical maps above are given by left Kan extension along $\z/m\to 0$, i.e.\ they provide the underlying objects. 

\begin{rema}
The composite $\tc \circ \thh$ at the bottom row above may not in general give the correct result since we only consider one prime in $\cycsp$. However, this is not an issue since we only work with $p$-complete objects in our applications.
\end{rema}

\begin{cons}\label{cons tc of kup and ku mod p are graded}
Since $ku_p$ is a $p-1$-graded  $E_\infty$-ring, we obtain that $\thh(ku_p)$ is a $p-1$-graded $E_\infty$-algebra in cyclotomic spectra and that $\tc(ku_p)$ is a $p-1$-graded $E_\infty$-ring. 

Furthermore, in Section \ref{subsec two periodic morava kthry via root adjunction}, we defined $ku/p$ as a $p-1$-graded $ku_p$-algebra. In particular, this implies that $ku/p$ is a right module over $ku_p$ in the $\infty$-category of $p-1$-graded $E_1$-rings, see \cite[Construction 4.11]{ausoni2022adjroot}. Therefore,  $\thh(ku/p)$ is a  right $\thh(ku_p)$-module in the $\infty$-category of $p-1$-graded cyclotomic spectra and that  $\tc(ku/p)$ is a right $\tc(ku_p)$-module in the $\infty$-category of $p-1$-graded spectra. 
\end{cons}

\begin{rema}
Furthermore, the levelwise application of the symmetric monoidal functor $\cycsp \to \sp^{BS^1}$ that forgets the Frobenius map shows that $\thh(X)$ is an $m$-graded $E_{k-1}$-algebra in $\sp^{BS^1}$ whenever $X$ is an $m$-graded $E_k$-ring. In particular, $\thh(X)^{hS^1}$ and $(\thh(X)^{tC_p})^{hS^1}$ also admit the structures of $m$-graded $E_{k-1}$-algebras.
\end{rema}

\subsection{Weight zero splitting of THH for root adjunctions.}

In Section \ref{subsec ku via root adjunction}, we show that the $p-1$-graded $E_1$-ring structure on $ku_p$ agrees with that given by the root adjunction method of \cite{ausoni2022adjroot}. The reason we do this is so that we can make use of Theorem 4.17 of \cite{ausoni2022adjroot} which states that for a $p$-local $A$, $\thh(A) \to \thh(A(\sqrt[m]{a}))_0$ is an equivalence whenever $p \nmid m$. Furthermore, this equivalence carries over to topological cyclic homology due to \cite[Theorem 5.5]{ausoni2022adjroot}. We obtain the following.
\begin{prop}\label{prop weight zero inclusion for thh ku}
The canonical maps
\[\thh(\ell_p) \xrightarrow{\simeq} \thh(ku_p)_0 \textup{\ \ and\ } \tc(\ell_p) \xrightarrow{\simeq} \tc(ku_p)_0\]
are equivalences. 
\end{prop}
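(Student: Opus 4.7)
The plan is to reduce immediately to the cited theorems of \cite{ausoni2022adjroot} by recalling that we have already identified the $p-1$-graded $E_1$-ring structure on $ku_p$ with that provided by root adjunction. Specifically, Proposition \ref{prop adj root to lp is kup} furnishes an equivalence
\[ku_p \simeq \ell_p(\sqrt[p-1]{v_1})\]
of $p-1$-graded $E_1$-rings, where the right hand side denotes the root adjunction constructed in Section \ref{subsec ku via root adjunction} via the relative smash product $\ell_p \wdg_{\sphzpmone} \sphztwo$. Since $\ell_p$ is $p$-local and $\gcd(p, p-1) = 1$, this places us squarely in the setting where the main results of \cite{ausoni2022adjroot} apply.

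For the THH statement, I would invoke \cite[Theorem 4.17]{ausoni2022adjroot} directly: under the stated hypotheses, the unit map $A \to A(\sqrt[m]{a})$ induces an equivalence $\thh(A) \xrightarrow{\simeq} \thh(A(\sqrt[m]{a}))_0$ on the weight zero summand of THH. Applied to $A = \ell_p$, $m = p-1$, and $a = v_1$, this yields the first claimed equivalence. Naturality guarantees that the resulting map is the canonical one coming from the map $\ell_p \to ku_p$ of $p-1$-graded $E_\infty$-rings that is concentrated in weight zero, as constructed in Diagram \eqref{diag maps from the weight 0 section of sph z2 to kup}.

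For the TC statement, I would invoke \cite[Theorem 5.5]{ausoni2022adjroot}, which promotes the THH equivalence above to an equivalence $\tc(A) \xrightarrow{\simeq} \tc(A(\sqrt[m]{a}))_0$ under the same hypotheses. Note that this is consistent with the observation in the paragraph preceding Definition \ref{defi of tc and trivial cyclotomic spectrum functor}: since $p = 1$ in $\z/(p-1)$, the Frobenius respects the weight grading and so $\tc$ commutes with the weight zero projection, which is why the THH equivalence descends to TC here. Applying this with $A = \ell_p$, $m = p-1$, $a = v_1$ yields the second equivalence.

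The main conceptual obstacle has already been overcome in Section \ref{subsec ku via root adjunction}, namely identifying the Fourier-theoretic $p-1$-graded $E_1$-ring structure on $ku_p$ of Theorem \ref{theo kup is a p-1 graded einfty lp algebra} with a root adjunction in the sense of \cite{ausoni2022adjroot}; without this identification the theorems from \cite{ausoni2022adjroot} could not be applied. Once this identification is in hand, the present proposition is essentially a one-line citation of the THH and TC root adjunction theorems, so I expect the proof to be very short.
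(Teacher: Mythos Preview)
Your proposal is correct and matches the paper's own argument essentially line for line: the paper states just before the proposition that, having identified $ku_p \simeq \ell_p(\sqrt[p-1]{v_1})$ as $p-1$-graded $E_1$-rings, one applies \cite[Theorem 4.17]{ausoni2022adjroot} for $\thh$ and \cite[Theorem 5.5]{ausoni2022adjroot} for $\tc$. Your additional remarks on naturality and the Frobenius respecting the grading are accurate elaborations but not needed beyond what the paper already records.
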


\begin{prop}\label{prop weight zero inclusion of thh two periodic morava kthry}
The canonical maps
\[\thh(\ell/p) \xrightarrow{\simeq} \thh(ku/p)_0  \textup{\ \ and \ } \tc(\ell/p) \xrightarrow{\simeq} \tc(ku/p)_0\]
are equivalences.
\end{prop}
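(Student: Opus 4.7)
The plan is to apply exactly the same cited results from \cite{ausoni2022adjroot} that were used in the proof of Proposition \ref{prop weight zero inclusion for thh ku}, but now with $\ell_p$ replaced by $\ell/p$ and the root being adjoined now being $v_1 \in \pi_*(\ell/p)$. The key point is that Proposition \ref{prop two periodic morava kthry is given by root adjunction} provides an equivalence of $p-1$-graded $E_1$-rings
\[
ku/p \simeq \ell/p(\sqrt[p-1]{v_1}),
\]
so the weight zero comparison for $ku/p$ over $\ell/p$ is formally identical to the comparison of $ku_p$ over $\ell_p$ in Proposition \ref{prop weight zero inclusion for thh ku}.

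The first step is to check that the hypotheses of \cite[Theorem 4.17]{ausoni2022adjroot} are satisfied. That theorem requires that the base $E_1$-ring be $p$-local and that the index $m$ of the root be coprime to $p$. Here $\ell/p$ is $p$-local because it is defined as a smash product over $\sph[z_0]$ involving $\ell_p$, and $m = p-1$ is visibly coprime to $p$. Applying \cite[Theorem 4.17]{ausoni2022adjroot} therefore yields that the canonical map
\[
\thh(\ell/p) \xrightarrow{\simeq} \thh\bigl(\ell/p(\sqrt[p-1]{v_1})\bigr)_0 \simeq \thh(ku/p)_0
\]
is an equivalence, which establishes the first of the two claimed equivalences.

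For the second equivalence, I would invoke \cite[Theorem 5.5]{ausoni2022adjroot}, which transports the weight zero equivalence for THH through the formation of $\ntc$ and $\tc$ under the same hypotheses (this is precisely what was used in Proposition \ref{prop weight zero inclusion for thh ku}). Concretely, this gives that the canonical map
\[
\tc(\ell/p) \xrightarrow{\simeq} \tc(ku/p)_0
\]
is an equivalence, where the weight zero summand of $\tc(ku/p)$ is well-defined because, by the discussion in Section \ref{sec graded THH, TC, neg TC} (and in particular Construction \ref{cons tc of kup and ku mod p are graded} together with Diagram \eqref{diag for graded thh and graded tc}), $\tc$ of a $p-1$-graded ring is itself $p-1$-graded since $p \equiv 1 \pmod{p-1}$.

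There is essentially no obstacle beyond checking that the cited theorems apply verbatim; the mildly non-routine point to keep in mind is that the splitting of $\tc$ into weight summands is the one constructed in Section \ref{sec graded THH, TC, neg TC} via \eqref{diag for graded thh and graded tc}, so that $\tc(ku/p)_0$ genuinely refers to the weight zero part of the $p-1$-graded $\tc(ku_p)$-module structure of Construction \ref{cons tc of kup and ku mod p are graded}, and that this is compatible with the weight decomposition of $\thh$ to which \cite[Theorem 5.5]{ausoni2022adjroot} applies. Once this compatibility is observed, both equivalences are immediate consequences of \cite[Theorems 4.17 and 5.5]{ausoni2022adjroot}.
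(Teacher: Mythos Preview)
Your proposal is correct and follows exactly the paper's approach: the paper states both Propositions \ref{prop weight zero inclusion for thh ku} and \ref{prop weight zero inclusion of thh two periodic morava kthry} as immediate consequences of \cite[Theorems 4.17 and 5.5]{ausoni2022adjroot}, using Proposition \ref{prop two periodic morava kthry is given by root adjunction} to identify $ku/p$ as $\ell/p(\sqrt[p-1]{v_1})$. Your added verification of the hypotheses ($p$-locality of $\ell/p$, $p\nmid p-1$) and the compatibility remark about the grading on $\tc$ are accurate elaborations of what the paper leaves implicit.
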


\section{Logarithmic THH of the complex $K$-theory spectrum}\label{sec log thh with an equivariant splitting}

Here, we use the $p-1$-grading on $\thh(ku_p)$ to obtain a splitting of the logarithmic THH of $ku_{(p)}$ as an $S^1$-equivariant spectrum. We identify the resulting splitting at the level of  $V(1)$-homotopy  by using the logarithmic THH computations of Rognes, Sagave and Schlichtkrull in  \cite{rognes2018logthhofku}. For the rest of this section, let $p>3$.

\begin{rema}
For the rest, we consider $V(1) \to T(2)$ as a map of commutative monoids in the homotopy category of $p-1$-graded cyclotomic spectra with the trivial cyclotomic structure concentrated in weight $0$ (using   $F_0 \circ (-)^{triv}$).  
\end{rema}
\begin{rema}
In the following, we move freely between $\thh(ku_{(p)})$ and $\thh(ku_{p})$ since ultimately, we are interested in the $V(1)$-homotopy of these objects for which we have an equivalence $V(1) \wdg \thh(ku_{(p)}) \simeq V(1) \wdg \thh(ku_p)$. Similarly, we move freely between $V(1) \wdg \thh(\ell)$ and $V(1) \wdg \thh(\ell_p)$.

\end{rema}

Let $\thh(ku_{(p)} \mid u_2)$ denote the logarithmic THH of $ku_{(p)}$ with respect to the Bott class $u_2 \in \pis(ku_{(p)})\cong \zpl[u_2]$ in the sense of \cite{rognes2018logthhofku}. In \cite{rognes2018logthhofku}, this is denoted by $\thh(ku_{(p)}, D(u))$. This is an $S^1$-equivariant $E_\infty$-algebra and there is a cofiber sequence of $S^1$-equivariant spectra: 
\begin{equation}
    \thh(ku_{(p)}) \to \thh(ku_{(p)}\mid u_2) \to \Sigma \thh(\zpl),
\end{equation}
where the first map is a map of $E_\infty$-algebras in $S^1$-equivariant spectra, see the discussion after \cite[Definition 4.6]{rognes2015localization}. 

Here, our goal is to prove the following proposition where $\thh(\ell\mid v_1)$ denotes the logarithmic THH of $\ell$ with respect to the class $v_1 \in \pis \ell$ as defined in \cite{rognes2018logthhofku} where it is denoted by $\thh(\ell, D(v))$. 
\begin{prop}\label{prop weight grading on log thh ku}
There is an equivalence of $S^1$-equivariant spectra:
\[\vo \wdg \thh(ku_{(p)}\mid u_2)\simeq \vo \wdg \thh(\ell\mid v_1) \vee \big(\bigvee_{i \in \z/(p-1) \mid i \neq 0} \vo \wdg \thh(ku_p)_i\big),\]
given by the coproduct of the map $\vo \wdg \thh(\ell\mid v_1) \to \vo \wdg \thh(ku_{(p)} \mid u_2)$ with the composite:
\[\bigvee_{i\in \z/(p-1)\mid i \neq0 }\vo  \wdg \thh(\kup)_i \to \vo \wdg \thh(ku_p) \to \vo \wdg \thh(\kupl \mid u_2),\]
where the first map is given by the inclusion of the  given summands of the $p-1$-graded spectrum $\thh(ku_p)$ and the second one is the canonical one.

\end{prop}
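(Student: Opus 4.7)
The plan is to construct the map $\Phi$ in the statement and prove it is an equivalence by combining a comparison of cofiber sequences with the $V(1)$-homotopy computations of \cite{rognes2018logthhofku}. First, the two summands: $\phi \co \vo \wdg \thh(\ell \mid v_1) \to \vo \wdg \thh(ku_{(p)} \mid u_2)$ is induced by functoriality of logarithmic THH along $\ell \to ku_{(p)}$, the log structures being compatible via $v_1 \mapsto u_2^{p-1}$; $\psi \co \bigvee_{i \neq 0} \vo \wdg \thh(ku_p)_i \to \vo \wdg \thh(ku_{(p)} \mid u_2)$ is the composite in the statement. We set $\Phi = \phi \vee \psi$.

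To identify the cofiber of $\phi$, apply naturality of the log THH cofiber sequence to $\ell \to ku_{(p)}$, obtaining (after smashing with $\vo$) a commutative diagram
\begin{equation*}
\begin{tikzcd}
\vo \wdg \thh(\ell) \ar[r] \ar[d] & \vo \wdg \thh(\ell \mid v_1) \ar[r] \ar[d, "\phi"] & \vo \wdg \Sigma \thh(\zpl) \ar[d, "\simeq"] \\
\vo \wdg \thh(ku_{(p)}) \ar[r] & \vo \wdg \thh(ku_{(p)} \mid u_2) \ar[r] & \vo \wdg \Sigma \thh(\zpl).
\end{tikzcd}
\end{equation*}
The right vertical is an equivalence since both residues $\ell/v_1$ and $ku_{(p)}/u_2$ are equivalent to $H\zpl$ and the induced map is the identity on $\pi_0$. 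Comparing the rows gives $\on{cof}(\phi) \simeq \on{cof}\big(\vo \wdg \thh(\ell) \to \vo \wdg \thh(ku_{(p)})\big)$; by Proposition \ref{prop weight zero inclusion for thh ku} and the weight splitting of $\thh(ku_p)$, this is $\vo \wdg \bigvee_{i \neq 0} \thh(ku_p)_i$. We obtain the cofiber sequence
\begin{equation*}
\vo \wdg \thh(\ell \mid v_1) \xrightarrow{\phi} \vo \wdg \thh(ku_{(p)} \mid u_2) \xrightarrow{q} \vo \wdg \bigvee_{i \neq 0} \thh(ku_p)_i,
\end{equation*}
and by a diagram chase, $\Phi$ is an equivalence if and only if $q \circ \psi$ is.

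Finally, I would verify that $q \circ \psi$ is an equivalence by checking on underlying homotopy groups, combining the formulas of \cite{rognes2018logthhofku} for $V(1)_*\thh(ku_{(p)} \mid u_2)$ and $V(1)_*\thh(\ell \mid v_1)$ with Ausoni's description of $V(1)_*\thh(ku_p)$ and its weight pieces from \cite{ausoni2010kthryofcomplexkthry}. Explicitly, these formulas exhibit $V(1)_*\thh(ku_{(p)} \mid u_2)$ as a free $V(1)_*\thh(\ell \mid v_1)$-module with basis $\{u_2^i\}_{0 \leq i \leq p-2}$, and since $u_2^i$ lies in the weight $i$ summand of $V(1)_*\thh(ku_p)$, matching the two descriptions shows $q \circ \psi$ is an isomorphism. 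The main obstacle is this last step: translating between the computational framework of \cite{rognes2018logthhofku} and the weight-graded structure of $\thh(ku_p)$, and confirming that the $u_2^i$ classes generate the non-zero weight summands and give a basis of $V(1)_*\thh(ku_{(p)} \mid u_2)$ over $V(1)_*\thh(\ell \mid v_1)$.
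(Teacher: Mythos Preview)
Your approach is correct and uses the same core idea as the paper: compare the log THH cofiber sequences for $\ell$ and $ku_{(p)}$ and use the weight splitting of $\thh(ku_p)$. The paper works with the rotated form $V(1)\wdg\thh(\zpl)\to V(1)\wdg\thh(R)\to V(1)\wdg\thh(R\mid x)$ and simply observes that the map $V(1)\wdg\thh(\zpl)\to V(1)\wdg\thh(ku_p)$ lands in the weight-$0$ summand $V(1)\wdg\thh(\ell_p)$; this immediately splits the bottom cofiber sequence as the coproduct of the top one with a trivial sequence on nonzero weights, giving the proposition with no further verification.

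Your last step, however, is more work than necessary. You do not need any homotopy computations from \cite{rognes2018logthhofku} or \cite{ausoni2010kthryofcomplexkthry} to see that $q\circ\psi$ is an equivalence: it follows formally from the naturality square
\[
\begin{tikzcd}
V(1)\wdg\thh(ku_p) \ar[r] \ar[d,"\textup{proj}"'] & V(1)\wdg\thh(ku_{(p)}\mid u_2) \ar[d,"q"] \\
\on{cof}\big(V(1)\wdg\thh(\ell)\to V(1)\wdg\thh(ku_p)\big) \ar[r,"\simeq"] & \on{cof}(\phi)
\end{tikzcd}
\]
together with the fact that your $\psi$ is, by construction, a section of the left vertical projection. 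Composing around the square gives $q\circ\psi$ equal to the bottom equivalence. The explicit formulas you mention are only needed later in the paper (Proposition~\ref{prop subgroups of the weight grading on log thh ku}) to identify the images of the summands inside $V(1)_*\thh(ku_{(p)}\mid u_2)$, not for the splitting itself.
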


\begin{rema}
 Since $ku_p \simeq \ell_p (\sqrt[p-1]{v_1})$, this is an immediate consequence of the results of \cite[Section 6]{ausoni2022adjroot} if we assume that the definition of logarithmic THH in \cite{ausoni2022adjroot} agrees with that used in \cite{rognes2018logthhofku}. This compatibility result is not available at the moment, and therefore, we will not assume it. On the other hand, Devalapurkar and Moulinos prove this compatibility result in their upcoming work. 
\end{rema}

\begin{proof}
Due to \cite[Theorem 4.4]{rognes2018logthhofku},  there is a map of homotopy cofiber sequences of $S^1$-equivariant spectra: 
\begin{equation*}
    \begin{tikzcd}
        V(1) \wdg \thh(\zpl) \ar[r]\ar[d] &  V(1) \wdg \thh(\ell) \ar[r]\ar[d]& V(1) \wdg \thh(\ell \mid v_1)\ar[d] \\
          V(1) \wdg \thh(\zpl) \ar[r] &  V(1) \wdg \thh(ku_p) \ar[r] & V(1) \wdg \thh(ku_{(p)} \mid u_2)
    \end{tikzcd}
\end{equation*}
as mentioned in \cite[Equation (8.1)]{rognes2018logthhofku}. Here, the left hand vertical map is an equivalence. Therefore, the bottom left horizontal map factors as 
\[V(1) \wdg \thh(\zpl) \to  \vo \wdg \thh(\ell) \to \vo \wdg \thh(ku_p) \simeq \bigvee_{i \in \z/(p-1)} V(1) \wdg \thh(ku_p)_i.\]
The second map above is  the inclusion of the weight $0$ summand due to Proposition \ref{prop weight zero inclusion for thh ku}. In particular, the cofiber sequence given by the bottom row splits through the splitting of $\thh(ku_p)$. Namely, this cofiber sequence is given by a coproduct of the cofiber sequence 
given by the top row and the cofiber sequence 
\[* \to \bigvee_{i \in  \z/(p-1) \mid i \neq 0} V(1) \wdg \thh(ku_p)_i\xrightarrow{\simeq} \bigvee_{i \in  \z/(p-1) \mid i \neq 0} V(1) \wdg \thh(ku_p)_i.\]
This identifies the cofiber, i.e.\ $\vo \wdg \thh(\kupl \mid u_2)$ as stated in the proposition. 

\end{proof}

We will identify the homotopy groups of the summands of $V(1) \wdg  \thh(\kupl\mid u_2)$ given by the splitting above. For this, we start by recalling the computations of $V(1)_* \thh(\ell\mid v_1)$ and $V(1)_* \thh(\kupl\mid u_2)$ from \cite{rognes2018logthhofku}. For what follows, $E(x,y)$, $P(x)$  and $P_k(x)$ denote the exterior algebra over $\fp$ in two variables,  the  polynomial algebra $\fp[x]$ and the truncated polynomial algebra $\fp[x]/x^k$ respectively. 

\begin{theo}\label{theo log thh of ku and ell}\cite[Theorems 7.3 and 8.1]{rognes2018logthhofku}
There are ring isomorphisms:
\begin{equation*}
    \begin{split}
        \vos \thh(\ell \mid v_1) \cong&  E(\lambda_1,d\log v_1) \otimes P(\kappa_1) \\
        \vos \thh(\kupl \mid u_2) \cong& P_{p-1}(u_2) \otimes E(\lambda_1,d\log u_2) \otimes P(\kappa_1)
    \end{split}
\end{equation*}
where $\lv \lambda_1\rv = 2p-1$, $\lv \kappa_1 \rv = 2p$, $\lv d\log v_1\rv = \lv d\log u_2\rv = 1$ and $\lv u_2 \rv = 2$. Furthermore, the map \[\vos \thh(\ell \mid v_1) \to \vos \thh(\kupl \mid u_2)\]
is given by the ring map that carries $d \log v_1$ to $- d \log u_2$, $\lambda_1$ to $\lambda_1$ and $\kappa_1$ to $\kappa_1$. 
\end{theo}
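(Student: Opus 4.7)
The plan is to derive both ring isomorphisms and the comparison map from the defining cofiber sequences of logarithmic THH, combined with the prior computations of $V(1)$-homotopy of $\thh$ of $\zpl$, $\ell$, and $\kupl$. Recall the $S^1$-equivariant cofiber sequences
\[\thh(\ell) \to \thh(\ell\mid v_1) \to \Sigma\thh(\zpl),\qquad \thh(\kupl)\to\thh(\kupl\mid u_2)\to\Sigma\thh(\zpl),\]
which are a general feature of the localisation construction of \cite{rognes2015localization}. I would smash these with $\vo$, obtain long exact sequences in homotopy, and feed in B\"okstedt's classical computation of $\vos\thh(\zpl)$ together with the Ausoni--Rognes and Ausoni computations of $\vos\thh(\ell)$ and $\vos\thh(\kup)$ as inputs.

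The first step is to recognise that the connecting map $\vos\Sigma\thh(\zpl)\to\vos\thh(\ell)$ (and its $\kupl$-variant) kills precisely the higher B\"okstedt $\mu$-classes, leaving behind the single polynomial generator $\kappa_1$ and the exterior generator $\lambda_1$. The long exact sequence then collapses to short exact sequences whose extensions I would resolve to obtain the claimed algebras $E(\lambda_1,d\log v_1)\otimes P(\kappa_1)$ and $P_{p-1}(u_2)\otimes E(\lambda_1,d\log u_2)\otimes P(\kappa_1)$. The classes $\lambda_1$ and $\kappa_1$ are inherited from $\vos\thh(\ell)$; the classes $d\log v_1$ and $d\log u_2$ are the canonical degree-one classes provided by the logarithmic structure, characterised by the property that their image under the boundary map hits the fundamental class of $\Sigma\thh(\zpl)$; and $u_2\in\vos\thh(\kupl\mid u_2)$ is the image of the unit $u_2\in\pi_2\kupl$, nilpotent of exponent $p-1$ because $v_1=0$ in $\vos$ while $v_1$ agrees with $u_2^{p-1}$ up to a unit in $\pi_*\kupl$.

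The multiplicative structure is forced by the fact that $\thh(-\mid-)$ is an $E_\infty$-algebra over $\thh(-)$ (see the discussion after \cite[Definition 4.6]{rognes2015localization}), so all products of the above generators are well-defined and the degrees pin down the algebra uniquely. The comparison map is then determined by naturality of the cofiber sequences with respect to $\ell\to\kupl$: the classes $\lambda_1$ and $\kappa_1$ are tautologically preserved, while for the $d\log$ classes I would use $v_1 = c\cdot u_2^{p-1}$ in $\pi_{2(p-1)}\kupl$ for some unit $c\in\pi_0\kupl$, giving
\[d\log v_1 \;=\; d\log c + (p-1)\,d\log u_2 \;=\; -d\log u_2\]
in $V(1)$-homotopy, where $d\log c=0$ since $c$ lies in degree zero, and $(p-1)\equiv -1 \pmod p$.

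The main obstacle is the multiplicative extension analysis in the B\"okstedt-type spectral sequence that underlies the $\vos\thh$ computations of Ausoni--Rognes and Ausoni; the logarithmic version of this analysis is precisely what is carried out in \cite{rognes2018logthhofku}, and I would essentially follow their argument. Once the identification of the generators and the compatibility of the boundary map with the $d\log$ classes are in place, the stated ring structures and the ring map are forced.
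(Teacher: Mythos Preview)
This theorem is not proved in the paper; it is stated with a direct citation to \cite[Theorems 7.3 and 8.1]{rognes2018logthhofku} and used as a black box. So there is no ``paper's own proof'' to compare against. Your proposal is, in outline, a sketch of the argument that Rognes--Sagave--Schlichtkrull actually carry out in the cited reference: they run the localization cofiber sequences against the known computations of $\vos\thh(\zpl)$, $\vos\thh(\ell)$, and $\vos\thh(\kupl)$, identify the $d\log$ classes via the boundary map, and resolve the multiplicative extensions.

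One small point: your justification for $d\log v_1 \mapsto -d\log u_2$ via ``$d\log c = 0$ since $c$ lies in degree zero'' is not quite the right reasoning. The logarithmic structure is generated by the single class $u_2$ (resp.\ $v_1$), so $d\log$ is only defined for monoid elements, i.e.\ powers of $u_2$; the unit $c$ is not part of the pre-log structure, and there is no separate $d\log c$ to worry about. The correct statement is simply that under $\ell \to \kupl$ one has $v_1 \mapsto u_2^{p-1}$ at the level of pre-log structures (with the normalization used in \cite{rognes2018logthhofku}), and then $d\log v_1 \mapsto (p-1)\, d\log u_2 \equiv -d\log u_2 \pmod p$. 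Beyond that, your sketch is a faithful summary of how the cited theorem is established.
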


 Recall that there is an action of the group  $\Delta := \z/(p-1)$ on  $ku_p$ through Adams operations. Let $\delta \in \Delta$ be a chosen generator and we choose a $\beta \in \fp^\times$ such that  $\pis (\sph/p \wdg \delta) (u_2) = \beta u_2$; here,
\[\pis (\sph/p \wdg \delta) \co \pis (\sph/p \wdg \kup) \to \pis (\sph/p \wdg \kup) \cong \fp[u_2].\]

 A given $x \in V(1)_*\thh(\kup)$ is said to have $\delta$-weight $i\in \z/(p-1)$ if the automorphism of  $V(1)_*\thh(\kup)$ induced by $\delta$ carries $x$ to $\beta^ix$  \cite[Definition 8.2]{ausoni2005thhofku}; the $\delta$-weights of the generators of $\vos \thh(\kup)$ are given in \cite[Proposition 10.1]{ausoni2005thhofku}. One defines $\delta$-weight in a similar way for $\vos \kth(\kup)$, $\vos\tc(\kup)$ etc.

It follows by \cite[Proposition 8.2]{ausoni2022adjroot} that $V(1)_*\thh(ku_p)_i$ is precisely given by the classes of $\delta$-weight $i$ in $\vos \thh(\kup)$. In other words, $\delta$-weight and our weight gradings agree for $\vos \thh(ku_p)$. 
\begin{comment}
Furthermore, we assign $\z/(p-1)$-weights to the $\fp$-module generators of 
\[\vos \thh(\kupl \mid u_2)\] 
given in Theorem \ref{theo log thh of ku and ell}. For this, we let $u_2$ have weight  $1$ and the rest of the generators $d\log u_2$, $\kappa_1$ and $\lambda_1$ have weight zero. We extend this weight to the products of these classes in the canonical way; i.e.\ the weight of a product of two classes is given by the addition of the weights. The following proposition states that this weight grading on $\vos \thh(\kupl\mid u_2)$ is compatible with its splitting given in Proposition \ref{prop weight grading on log thh ku}.
\end{comment}
\begin{prop}\label{prop subgroups of the weight grading on log thh ku}
For $0<i < p-1$, the image of the inclusion 
\[\psi_i \co \vos \thh(ku_p)_i \to \vos \thh(\kupl \mid u_2)\]
is given by:
\[\{u_2^{i}\} \otimes E(\lambda_1,d\log u_2) \otimes P(\kappa_1).\]
%the $\fp$-module generated by the generators  in $\vos \thh(\kupl \mid u_2)$ that are of weight $i$. 
Here, the maps $\psi_i$ are given by Proposition \ref{prop weight grading on log thh ku}.
\end{prop}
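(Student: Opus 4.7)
The plan is to identify the weight grading coming from Proposition \ref{prop weight grading on log thh ku} with the $\delta$-weight grading on $V(1)_* \thh(\kupl \mid u_2)$ induced by the $\Delta$-action, and then to match the summands of the splitting with the $\delta$-eigenspaces.

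First, I would determine the $\delta$-weights of the ring generators of $\vos \thh(\kupl \mid u_2)$ listed in Theorem \ref{theo log thh of ku and ell}. Since $\delta$ multiplies $u_2$ by $\beta \in \fp^\times$ on $V(1)$-homotopy, the class $u_2$ has $\delta$-weight $1$, while $d\log u_2$ has $\delta$-weight $0$ because $d\log(\beta u_2) = d\log \beta + d\log u_2 = d\log u_2$. The classes $\lambda_1$ and $\kappa_1$ lie in the image of the ring map $\vos \thh(\ell \mid v_1) \to \vos \thh(\kupl \mid u_2)$ by Theorem \ref{theo log thh of ku and ell}, and since $\Delta$ acts trivially on $\ell$, the resulting map is $\Delta$-equivariant and lands in the $\delta$-fixed subspace. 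Multiplicativity of the $\delta$-action then shows that the $\delta$-weight $i$ part of $\vos \thh(\kupl \mid u_2)$ (for $0 \leq i < p-1$) is precisely
\[ \{u_2^i\} \otimes E(\lambda_1, d\log u_2) \otimes P(\kappa_1). \]

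Next, I would verify that each $\psi_i$ factors through the $\delta$-weight $i$ subspace. By construction, $\psi_i$ is the composite
\[ V(1) \wdg \thh(\kup)_i \hookrightarrow V(1) \wdg \thh(\kup) \to V(1) \wdg \thh(\kupl \mid u_2). \]
By \cite[Proposition 8.2]{ausoni2022adjroot}, the weight grading on $\vos \thh(\kup)$ coincides with its $\delta$-weight grading, so the first map lands in $\delta$-weight $i$. The second (canonical) map is $\Delta$-equivariant by naturality of logarithmic THH with respect to the $\Delta$-action on the pair $(\kup, u_2)$, since the $\delta$-action rescales $u_2$ by a unit in $\pi_0 \kup$ and therefore preserves the pre-log structure generated by $u_2$. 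Consequently $\im(\psi_i) \subseteq \{u_2^i\} \otimes E(\lambda_1, d\log u_2) \otimes P(\kappa_1)$.

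To upgrade this containment to equality, I would invoke the splitting of Proposition \ref{prop weight grading on log thh ku} once more. The weight $0$ summand is the image of $\vos \thh(\ell \mid v_1) \to \vos \thh(\kupl \mid u_2)$, which by Theorem \ref{theo log thh of ku and ell} is exactly $\{1\} \otimes E(\lambda_1, d\log u_2) \otimes P(\kappa_1)$. Since the direct sum of the $p-1$ subspaces $\{u_2^i\} \otimes E(\lambda_1, d\log u_2) \otimes P(\kappa_1)$ exhausts $\vos \thh(\kupl \mid u_2) \cong P_{p-1}(u_2) \otimes E(\lambda_1, d\log u_2) \otimes P(\kappa_1)$, and the splitting already writes this group as the direct sum of the weight-$0$ image and the images of $\psi_1, \ldots, \psi_{p-2}$, each $\psi_i$ must surject onto the claimed subspace. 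The main obstacle I anticipate is rigorously justifying the $\Delta$-equivariance of the canonical map $\thh(\kup) \to \thh(\kupl \mid u_2)$: intuitively $\delta$ rescales $u_2$ by a unit and so preserves the pre-log structure, but extracting functoriality of Rognes's logification construction under this rescaling from \cite{rognes2018logthhofku} requires some care.
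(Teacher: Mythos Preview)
Your approach is genuinely different from the paper's, and the difference is instructive. Both arguments finish with the same pigeonhole/dimension-count step, but you first establish the containment $\operatorname{im}(\psi_i) \subseteq V_i := \{u_2^i\}\otimes E(\lambda_1,d\log u_2)\otimes P(\kappa_1)$ via $\Delta$-equivariance of the canonical map into logarithmic THH, whereas the paper establishes the \emph{reverse} containment $V_i \subseteq \operatorname{im}(\psi_i)$ first. It does so by direct inspection of the explicit formulas in \cite[Theorem~8.5]{rognes2018logthhofku}, which say precisely which classes in $\vos\thh(ku_p)$ hit the generators of $V_i$, and then checks those preimages have $\delta$-weight $i$ using the known $\delta$-weights in $\vos\thh(ku_p)$ from \cite{ausoni2005thhofku}.

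The payoff of the paper's route is that it never needs a $\Delta$-action on $\thh(\kupl\mid u_2)$ at all, so the obstacle you flag---extracting $\Delta$-functoriality of the logification construction from \cite{rognes2018logthhofku}---simply does not arise. Your route is cleaner conceptually and would generalize more readily if that functoriality were available, but as you correctly anticipate it is not immediate from the reference; indeed, your computation of the $\delta$-weight of $d\log u_2$ via ``$d\log(\beta u_2)=d\log u_2$'' already presupposes exactly this action. The paper trades that conceptual input for a concrete lookup in an existing computation.
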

\begin{proof}
It follows from Theorem  \ref{theo log thh of ku and ell}  that the image of the inclusion 
\[\psi_0 \co \vos \thh(\ell
\mid v_1) \to \vos \thh(\kupl\mid u_2)\]
is given by 
\[V_0 := \{1\} \otimes E(\lambda_1,d\log u_2) \otimes P(\kappa_1);\]
 we say $V_0 = \textup{im}\  \psi_0$.
Also, let 
\[V_i = \{u_2^{i}\} \otimes E(\lambda_1,d\log u_2) \otimes P(\kappa_1).\]
 It follows by inspection on  \cite[Theorem 8.5]{rognes2018logthhofku} that  every $\fp$-module generator of $V_i$ given above gets hit by an element of $\delta$-weight $i$ under the map 
 \[\vos\thh(ku_{(p)})  \to \vos \thh(\kupl \mid u_2)\footnote{For this, note that the classes $a_i$, $b_i$ and $u_2$ are of $\delta$-weight $1$ and the classes $\lambda_1$ and $\mu_2$ are of $\delta$-weight $0$ in $\vos \thh(ku_{p})$.}.\]
 Since $\delta$-weight $i$ elements of $\vos \thh(ku_p)$ correspond to the $\fp$-submodule 
 \[\vos \thh(\kup)_i \subseteq \vos\thh(\kup),\]
 we deduce that $V_i \subseteq \textup{im}\  \psi_i$ for every $i$. Since 
 \[\bigoplus_{i \in \z/p-1} V_i\cong \vos \thh(\kupl \mid u_2) \cong \bigoplus_{i \in \z/p-1} \textup{im}\  \psi_i,\]
 and since all the vector spaces  involved are finite dimensional at each homotopy degree, we deduce that $V_i = \textup{im}\  \psi_i$ as desired. Note that the second isomorphism above follows by Proposition \ref{prop weight grading on log thh ku}.

\end{proof}
\section{Topological cyclic homology of complex $K$-theory}\label{sec topological cyclic homology of kup and ku mod p}

Let $p>3$ for the rest of this section. Here, we compute  $T(2)_*\kth(ku_p)$ and $T(2)_*\kth(ku/p)$.

\begin{rema}
Since $V(1)$ is a finite spectrum, $V(1) \wdg -$ commutes with all constructions involving colimits and limits. For instance, it commutes with homotopy fixed points and one has $\tc(\vo \wdg E) \simeq \vo \wdg \tc(E)$ for every cyclotomic spectrum $E$. 
\end{rema}

\subsection{Higher Bott element}
In  \cite[Section 3]{ausoni2010kthryofcomplexkthry}, Ausoni constructs a non-trivial class $b \in \vo_{2p+2} \kth(\kup)$ of $\delta$-weight $1$, that he calls the higher Bott element,  by considering the units of $\kup$. Here is a quick sketch of the construction of $b$. The infinite loop space $SL_1(ku)$ is often denoted by $BU_{\otimes}$. This notation is justified by the fact that the product represents the tensor products of virtual line bundles. The Postnikov truncation $BU_{\otimes} \to K(\z,2)$ admits a section \cite[\RomanNumeralCaps{5}.3.1]{may1977einfspacesandeinfringspectra}
\[j \co K(\z,2) \simeq BU(1) \to BU_{\otimes}\]
which is also a map of infinite loop spaces. The map $j$ represents taking the  corresponding virtual line bundle of a line bundle. This provides the following composite map 
\[\Sigma_+^\infty K(\z,3) \xrightarrow{\Sigma_+^{\infty} Bj} \Sigma_+^\infty BBU_{\otimes}\simeq \Sigma_+^\infty BSL_1(ku) \to \Sigma_+^\infty BGL_1(ku) \to \kth(ku)\]
of $E_\infty$-rings. The class $b$ is the image of a class $b'$ in  $ V(1)_{2p+2} \Sigma^\infty_+ K(\z,3)$ under the $V(1)$-homotopy of the composite map above. To construct $b'$, Ausoni begins with  obtaining low degree information on $V(1)_* K(\z,2)$ using Cartan's computation of $H_*(K(\z,2),\fp)$. After this, $b'$ is constructed as a filtration degree $2$ class in the $V(1)$-homotopy of the bar filtration on the classifying space $K(\z,3) \simeq BK(\z,2)$. %Indeed, the bar filtration is given by 
%\[B_0 =\{ *\}\to B_1 \to B_2 \to \cdots \to BK(\z,2)\]
%with filtration quotients 
%\[B_n/B_{n-1} = \Sigma^n K(\z,2)^{\wdg {n}}.\]
%This provides a cofiber sequence 
%\[B_1 = \Sigma K(\z,2) \to B_2 \to \Sigma^2K(\z,2)^{\wdg {2}}.\]

Let $b\in \vo_{2p+2} \tc(\kup)$ also denote the image of this class under the map $\vos \kth(\kup) \to \vos \tc(\kup)$; this is also a non-trivial class due to the following proposition. 
\begin{prop}\label{prop properties of the higher Bott element b}
The classes $b$ mentioned above satisfies the following properties. 
\begin{enumerate}
    \item The map $ \vos \tc(\kup)\to \vos \thh(\kup)$ carries $b$ to a $\delta$-weight $1$ class denoted as $b_1$ in \cite{ausoni2005thhofku}, see \cite[Lemma 4.4]{ausoni2010kthryofcomplexkthry}. Since $b_1$ is of $\delta$-weight $1$, we have \[b_1 \in \vos \thh(ku_p)_1.\]% and due to \cite[Theorem 8.6]{ausoni2022adjroot}, we have 
%    \[b \in \vos\tc(\kup)_1.\]
    \item The map $\vos \thh(ku_p) \to \vos \thh(\kupl\mid u_2)$ carries $b_1$ to $u_2 \kappa_1$ \cite[Theorem 8.5]{rognes2018logthhofku}.
    \item In $\vos \kth(\kup)$, we have $b(b^{p-1} + v_2) = 0$ \cite[Proposition 2.7]{ausoni2010kthryofcomplexkthry}.
\end{enumerate}
\end{prop}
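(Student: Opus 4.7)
The plan is to collect three statements already present in the literature, with the only additional content being a translation between Ausoni's $\delta$-weight grading of \cite{ausoni2005thhofku} and the weight grading set up in Section \ref{sec graded THH, TC, neg TC}. I would structure the argument as one short paragraph per item.

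For item (1), I would invoke \cite[Lemma 4.4]{ausoni2010kthryofcomplexkthry}, which identifies the image of $b$ under the trace map $\vos \tc(\kup) \to \vos \thh(\kup)$ with Ausoni's $\delta$-weight $1$ class $b_1$ from \cite{ausoni2005thhofku}. To translate this into the weight-grading language used in this paper, I would then cite \cite[Proposition 8.2]{ausoni2022adjroot}, already recalled just above Proposition \ref{prop subgroups of the weight grading on log thh ku}, which says that for $\vos \thh(ku_p)$ the $\delta$-weight decomposition and the weight decomposition coming from viewing $ku_p$ as a $p-1$-graded $E_\infty$-ring coincide. This immediately upgrades $b_1$ to a class in $\vos \thh(ku_p)_1$.

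For item (2), I would simply cite \cite[Theorem 8.5]{rognes2018logthhofku}, which records the image of $b_1$ under the canonical map $\vos \thh(ku_p) \to \vos \thh(\kupl \mid u_2)$ as $u_2 \kappa_1$. No additional manipulation is needed.

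For item (3), the relation $b(b^{p-1} + v_2) = 0$ in $\vos \kth(\kup)$ is established in \cite[Proposition 2.7]{ausoni2010kthryofcomplexkthry} via a direct computation with the units of $\kup$ and the Ravenel--Wilson computation of $K(2)_*K(\z,3)$; I would quote this result verbatim. The main (in fact only) conceptual point in the whole argument is the translation between the two weight conventions in item (1); everything else is a citation, so I do not expect any genuine obstacle.
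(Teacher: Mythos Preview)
Your proposal is correct and matches the paper's approach exactly: the paper gives no separate proof for this proposition, as each item is stated with its citation embedded directly in the statement, and the only extra content---the identification of $\delta$-weight with the weight grading via \cite[Proposition 8.2]{ausoni2022adjroot}---is precisely the translation step you flag, already recorded in the paragraph preceding Proposition~\ref{prop subgroups of the weight grading on log thh ku}.
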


Note that we do not provide a proof for the proposition above since everything stated are immediate from the cited references. 

\begin{prop}\label{prop higher bott is homogenous of weight 1}
The higher Bott element $b\in \vos \tc(\kup)$ is a homogeneous element of weight $1$ in the $p-1$-grading. In other words, 
$b \in \vos \tc(ku_p)_1$. Similarly, the corresponding element $b \in \vos \thh(ku_p)^{hS^1}$ is also of weight $1$.

\end{prop}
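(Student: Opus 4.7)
The plan is to promote Ausoni's observation that $b \in \vos \kth(\kup)$ has $\delta$-weight $1$ from a statement about the $\Delta = \z/(p-1)$-action to a statement about the $p-1$-grading coming from the $\sp$-linear Fourier transform of Section \ref{sec einfty grading on ku}. The two should agree at the level of $V(1)$-homotopy, just as the excerpt records for $\thh$ via \cite[Proposition 8.2]{ausoni2022adjroot}.

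First, I would record that the $\Delta$-action on $\kup$ by Adams operations induces, via functoriality of $\thh$ and $\tc$, compatible $\Delta$-actions on $\thh(\kup)$, $\thh(\kup)^{hS^1}$, and $\tc(\kup)$, and that the cyclotomic trace $\kth(\kup)\to \tc(\kup)$ together with the canonical map $\tc(\kup) \to \thh(\kup)^{hS^1}$ are $\Delta$-equivariant. Since Ausoni shows that $b\in\vos \kth(\kup)$ is of $\delta$-weight $1$, it follows immediately that its images in $\vos \tc(\kup)$ and $\vos \thh(\kup)^{hS^1}$ are also of $\delta$-weight $1$.

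Next, I would argue that the $p-1$-grading summand $\vos \tc(\kup)_i$ is precisely the $\delta$-weight $i$ eigenspace, and similarly for $\vos \thh(\kup)^{hS^1}$. The $p-1$-grading on $\tc(\kup)$ comes from applying the lax symmetric monoidal composite $\tc \circ \thh$ to the $p-1$-graded $E_\infty$-ring $\mathfrak{F}(\kup)$ (Construction \ref{cons tc of kup and ku mod p are graded} together with Diagram \eqref{diag for graded thh and graded tc}), and the $p-1$-grading on $\kup$ itself is by construction the Fourier transform of the $\Delta$-equivariance. Because the Fourier transform is determined eigenspace-by-eigenspace at the level of $V(1)$-homotopy, and all operations involved are $\Delta$-equivariant, weight and $\delta$-weight coincide on $\vos \tc(\kup)$ and $\vos \thh(\kup)^{hS^1}$, exactly as for $\vos \thh(\kup)$. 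Combined with the previous paragraph, this yields both $b \in \vos \tc(\kup)_1$ and $b \in \vos \thh(\kup)^{hS^1}_1$.

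The main obstacle will be the second step: upgrading the $\thh$-level identification of weight with $\delta$-weight to analogous statements for $\tc(\kup)$ and $\thh(\kup)^{hS^1}$. Because the splitting of $\tc(\kup)$ is preserved by the fiber sequence defining $\tc$ (thanks to $p=1$ in $\z/(p-1)$, see Section \ref{sec graded THH, TC, neg TC}), this should follow by chasing the compatibility along the maps $\tc(\kup)_i \to \thh(\kup)^{hS^1}_i \to \thh(\kup)_i$ and using the Fourier decomposition of each term. Once this compatibility is in place, the weight claim is immediate from $\Delta$-equivariance of the trace.
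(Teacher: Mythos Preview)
Your overall strategy matches the paper's: show that on $\vos\tc(\kup)$ and $\vos\thh(\kup)^{hS^1}$ the $p-1$-weight grading coincides with the $\delta$-eigenspace decomposition, and then invoke that $b$ has $\delta$-weight $1$. The paper also begins by observing that, because $\Delta$ is abelian, the generator $\delta$ acts by a map of $p-1$-graded $E_\infty$ $\ell_p$-algebras (via the Fourier equivalence $\mathfrak{F}$), so $\thh(\delta)$ and its fixed-point and Tate avatars preserve each weight piece.

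Where your proposal is incomplete is exactly at the step you flag as the ``main obstacle.'' Chasing along $\tc(\kup)_i \to \thh(\kup)^{hS^1}_i \to \thh(\kup)_i$ does not suffice: these maps are not injective on $V(1)$-homotopy, so knowing the $\delta$-weight on $\vos\thh(\kup)_i$ does not by itself pin down the $\delta$-weight on $\vos\thh(\kup)^{hS^1}_i$ or $\vos\tc(\kup)_i$. The paper fills this gap with two concrete ingredients. First, since $\vos\thh(\kup)_i$ is degreewise finite, the homotopy fixed point spectral sequence for $\vos\thh(\kup)_i^{hS^1}$ is strongly convergent, and $\delta$ acts by $\beta^i$ on its $E_2$-page; hence any eigenvector in the abutment has eigenvalue $\beta^i$. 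Second, since $\delta^{p-1}=\id$ and we are over $\fp$, the induced action on $\vos\thh(\kup)_i^{hS^1}$ is diagonalizable, so every class is an eigenvector and thus of $\delta$-weight $i$. The same argument handles $(-)^{tS^1}$, and then the fiber sequence defining $\tc$ together with diagonalizability gives the statement for $\vos\tc(\kup)_i$. You should incorporate these two points (strong convergence from degreewise finiteness, and diagonalizability from $\delta^{p-1}=\id$) in place of the diagram-chasing sketch.
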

\begin{proof}
Indeed, we show that all the elements in  $\vos\tc(\thh(ku_p)_i)$ are of $\delta$-weight $i$. Since $\Delta = \z/(p-1)$ is an abelian group, the map $ \delta \co ku_p \to ku_p$ induced by the chosen generator $\delta \in \Delta$ is a map of $E_\infty$-algebras in the $\infty$-category of $\Delta$-equivariant $\ell_p$-modules (not just a map of $E_\infty$ $\ell_p$-algebras). Therefore, using $\mathfrak{F}$ in \eqref{eq fourier transform}, $\delta \co ku_p \to ku_p$ can be considered as a  map of $p-1$-graded $E_\infty$ $\ell_p$-algebras. 

As a result, the induced map $\thh(\delta)$ is a map of $p-1$-graded cyclotomic objects. In particular, it preserves weight at the level of $\tc$, $\ntc$ and $\tp$. Recall that each \[x \in \vos\thh(ku_p)_i\]  is of $\delta$-weight $i$. Therefore, the map induced by $\delta$ at the level of the homotopy fixed point spectral sequence for  $\vos \thh(ku_p)_i^{hS^1}$ is given by multiplication by $\beta^{i} \in \fp^{\times}$. Since $\vos \thh(ku_p)_i$ is finite at each degree, this spectral sequence is strongly convergent \cite[Theorem 7.1]{boardman1998conditionalconv}. We deduce that every class in $\vos \thh(ku_p)_i^{hS^1}$   with defined $\delta$-weight have $\delta$-weight $i$. On the other hand, $\vos \thh(\delta)_i^{hS^1}$ is diagonalizable (since its $p-1$st power is identity), i.e.\  $\vos \thh(ku_p)_i^{hS^1}$  have a basis for which $\delta$-weight is defined for each basis element. Therefore, we deduce that all the classes in  $\vos \thh(ku_p)_i^{hS^1}$ are of $\delta$-weight $i$. The same argument shows that every class in  $\vos \thh(ku_p)_i^{tS^1}$ is of  $\delta$-weight $i$.

The fiber sequence defining  $\tc$ also shows that each class in $\vos\tc(\thh(ku_p)_i)$  either have $\delta$-weight $i$ or have undefined $\delta$-weight, but since this action is again diagonalizable, we deduce that every class in  $\vos\tc(\thh(ku_p)_i)$ is of $\delta$-weight $i$. Since $b$ is of $\delta$-weight $1$, the result follows.

\end{proof}

\subsection{Topological cyclic homology}
As mentioned earlier, we need to show that $b \in \ttws \tc(ku_p)$ is a unit. For this, we construct multiplication by $b$ as a self map of the cyclotomic spectrum $\vo \wdg \thh(ku_p)$ and show that it induces a self equivalence of $T(2) \wdg \tc(ku_p)$. We first show that $b$ provides a unit in $\ttws \thh(ku_p)^{hS^1}$ by comparing it with the corresponding multiplication in $T(2)_*\thh(ku_p \mid u_2)^{hS^1}$. 

\begin{cons}\label{cons the map of cyclotomic spectra given by multiplication by b}
We start with the map $\sph^{2p+2} \to \tc(\vo \wdg \thh(\kup))$ representing $b$. Using the adjunction $(-)^{\triv} \dashv \tc$ mentioned in Definition \ref{defi of tc and trivial cyclotomic spectrum functor}, one obtains a map of cyclotomic spectra 
\[b_1 \co \Sigma^{2p+2} \sph^{\triv} \to \vo \wdg \thh(\kup)\] 
representing the class $b_1$.  We define
\[m_b \co \Sigma^{2p+2} V(1) \wdg \thh(ku_p) \to V(1) \wdg \thh(ku_p)\]
as the following composite map of cyclotomic spectra.
\begin{equation*}
    \begin{split}
        m_b\co \Sigma^{2p+2} V(1) \wdg \thh(ku_p) \simeq V(1) \wdg \thh(ku_p) \wdg \Sigma^{2p+2} \sph^{\triv} &\xrightarrow{id \wdg b_1}\\ 
       \vo \wdg \thh(ku_p) \wdg \vo \wdg \thh(ku_p) \to& V(1) \wdg \thh(ku_p)
    \end{split}
\end{equation*}

Here, $id$ denotes the identity map of $\vo \wdg \thh(ku_p)$ and the second map above is given by the multiplication maps of $\thh(ku_p)$ and $V(1)$. 
\end{cons}

We construct a similar map for logarithmic THH of $\kupl$ which is compatible with the one constructed above. 
\begin{cons}\label{cons the map of log thh equivariant spectra multiplying by b}
The first map below is the  underlying $S^1$-equivariant map of the map $b_1$ in Construction \ref{cons the map of cyclotomic spectra given by multiplication by b}; the second map is the usual one.
\[u_2\kappa_1 \co \Sigma^{2p+2} \sph^{\triv} \xrightarrow{b_1}  \vo \wdg \thh(ku_p) \to \vo \wdg \thh(\kupl \mid u_2)\]
This composite is a map of $S^1$-equivariant spectra. Furthermore, it represents $u_2 \kappa_1$ in homotopy due to Proposition \ref{prop properties of the higher Bott element b}. As in Construction \ref{cons the map of cyclotomic spectra given by multiplication by b}, we define an $S^1$-equivariant map: 
\[m_{u_2 \kappa_1} \co \Sigma^{2p+2} \vo \wdg \thh(\kupl \mid u_2) \to \vo \wdg \thh(\kupl \mid u_2),\]
through the following composite. 
\begin{multline*}
          m_{u_2\kappa_1} \co \Sigma^{2p+2} V(1) \wdg \logthhku \simeq V(1) \wdg \logthhku \wdg \Sigma^{2p+2} \sph^{\triv} \xrightarrow{id \wdg u_2\kappa_1}\\ 
       \vo \wdg \logthhku \wdg \vo \wdg \logthhku \to V(1) \wdg \logthhku
\end{multline*}
Since $V(1) \wdg \thh(ku_p) \to \vo \wdg \thh(\kupl \mid u_2)$ is a map of monoids in the homotopy category of $S^1$-equivariant spectra, the following canonical diagram of $S^1$-equivariant spectra commutes up to homotopy. 
\begin{equation}\label{diag multlication commute with the thh and log thh}
    \begin{tikzcd}
        \Sigma^{2p+2}\vo \wdg \thh(\kup) \ar[d,"m_b"]\ar[r] & \ar[d,"m_{u_2\kappa_1}"]\Sigma^{2p+2} \vo \wdg \logthhku \\
        \vo \wdg \thh(\kup) \ar[r] &\vo \wdg  \logthhku
    \end{tikzcd}
\end{equation}
\end{cons}

\begin{prop}\label{prop thh to log thh is an equiv after localization for ell}
The map $\thh(\ell) \to \logthhell$ induces an equivalence 
\[L_{T(2)}\thh(\ell)^{hS^1} \xrightarrow{\simeq} L_{T(2)} \logthhell^{hS^1}.\]
\end{prop}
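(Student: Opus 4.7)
The plan is to reduce the claim to the vanishing of $L_{T(2)}\thh(\zpl)^{hS^1}$. The first step is to invoke the $\ell$-analogue of the cofiber sequence of $S^1$-equivariant spectra used before Proposition \ref{prop weight grading on log thh ku}, namely
\[\thh(\zpl) \to \thh(\ell) \to \logthhell,\]
which already appears (after smashing with $V(1)$) as the top row of the commutative square in the proof of Proposition \ref{prop weight grading on log thh ku}. Since $(-)^{hS^1}$ is a right adjoint and $L_{T(2)}$ is an exact localization of spectra, both preserve fiber sequences; applying them yields a fiber sequence
\[L_{T(2)}\thh(\zpl)^{hS^1} \to L_{T(2)}\thh(\ell)^{hS^1} \to L_{T(2)}\logthhell^{hS^1}.\]
The proposition thus reduces to the vanishing $L_{T(2)}\thh(\zpl)^{hS^1}\simeq 0$.

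For this vanishing, the cleanest argument exploits that $\thh(\zpl)$ is canonically an $E_\infty$-algebra over $\hzpl$ via its unit map $\zpl \to \thh(\zpl)$, so that $\thh(\zpl)^{hS^1}$ inherits the structure of an $\hzpl$-module spectrum (the $\infty$-category of $\hzpl$-modules is closed under limits in spectra). It then suffices to observe that $L_{T(2)}\hzpl \simeq 0$: smashing $\hzpl$ with $V(1) = \sph/(p,v_1)$ yields a wedge of shifts of $\hfp$, since both $p$ and $v_1$ act as zero on $\hfp$; the self-map $v_2$ of $V(1)$ acts as zero on such a wedge of Eilenberg--MacLane summands, so inverting $v_2$ gives $T(2) \wedge \hzpl \simeq 0$. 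Any $\hzpl$-module spectrum therefore has trivial $T(2)$-localization, which closes the argument.

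The only step that requires any care is confirming the $S^1$-equivariant cofiber sequence above; this is already implicit in the proof of Proposition \ref{prop weight grading on log thh ku}, which cites the analogous statement for $ku_{(p)}$ from \cite{rognes2015localization}, so no new work is needed. If one wished to avoid the $\hzpl$-module reduction, an alternative route would be to invoke Ausoni--Rognes' computation of $\vos \thh(\zpl)^{hS^1}$ in \cite{ausonirogneskthryoftopologicalkthry} and verify directly that the resulting $\vos$-module is $v_2$-torsion, but the present argument is much more economical.
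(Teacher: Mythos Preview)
Your reduction via the localization cofiber sequence to the vanishing of $L_{T(2)}\thh(\zpl)^{hS^1}$ is exactly what the paper does (the paper phrases it as identifying the cofiber of $\thh(\ell)^{hS^1}\to\logthhell^{hS^1}$ with $\Sigma\thh(\zpl)^{hS^1}$, which is the same sequence up to rotation). The difference lies in how the vanishing is obtained.

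The paper invokes the $E_\infty$-ring map $\kth(\zpl)\to\thh(\zpl)^{hS^1}$ (the trace to $\ntc$) and then appeals to the Purity Theorem of Land--Mathew--Meier--Tamme to get $L_{T(2)}\kth(\zpl)\simeq 0$, whence any module over it is $T(2)$-acyclic. You instead use the $\hzpl$-algebra structure on $\thh(\zpl)^{hS^1}$ and the elementary fact that $T(2)\wedge\hzpl\simeq 0$. Your route is more self-contained: it sidesteps the substantial Purity machinery in favor of a direct nilpotence check on an Eilenberg--MacLane spectrum. The paper's route, while heavier, ties the statement into the broader chromatic purity framework that is already being used elsewhere in the argument (e.g.\ in the proofs of Theorems \ref{theo restated Ausoni algebraic k theory of ku} and \ref{theo restated kth of ku mod p}), so there is no real extra cost in context. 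Both are valid; yours would be the preferred argument if one wanted to minimize external input.

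One small point worth tightening: your justification that $\thh(\zpl)^{hS^1}$ is an $\hzpl$-module is correct but slightly compressed. The cleanest way to say it is that the unit $\hzpl\to\thh(\hzpl)$ is an $S^1$-equivariant $E_\infty$-map (trivial action on the source), and the lax monoidal functor $(-)^{hS^1}$ together with the unit $\hzpl\to(\hzpl^{\triv})^{hS^1}$ then produces the required $E_\infty$-map $\hzpl\to\thh(\zpl)^{hS^1}$.
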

\begin{proof}
 There is an $E_\infty$-map $\kth(\zpl) \to \thh(\zpl)^{hS^1}$ and we have  $L_{T(2)}\kth(\zpl) \simeq 0$ due to \cite[Purity Theorem]{land2020purity}. This implies that \[L_{T(2)} \thh(\zpl)^{hS^1} \simeq 0.\] Since the cofiber of the map $\thh(\ell)^{hS^1} \to \logthhell^{hS^1}$ is given by $\Sigma \thh(\zpl)^{hS^1}$, this provides the desired result.
\end{proof}
\begin{rema}\label{rema ttwo wdg e is given by ttwo localization}
As mentioned earlier, the map $V(1) \to T(2)$ is given by the $T(2)$-localization \[V(1) \to L_{T(2)} V(1)\simeq T(2).\] For a given spectrum $E$, $T(2) \wdg E$ is a homotopy $T(2)$-module and therefore, $T(2) \wdg E$ is $T(2)$-local. Furthermore, $V(1) \wdg E \to T(2) \wdg E$ is a $T(2)$-equivalence as $V(1) \to T(2)$ is. Therefore, $\vo \wdg E \to T(2) \wdg E$ is given by the $T(2)$-localization:
\[V(1) \wdg E \to L_{T(2)} (V(1) \wdg E) \simeq T(2) \wdg E. \]
\end{rema}
\begin{prop}\label{prop log thh multiplication map is a local equivalence}
For the composite $S^1$-equivariant map: 
\begin{multline*}
    f \co\Sigma^{2p+2} \vo \wdg  \logthhell \to \Sigma^{2p+2}\vo \wdg \logthhku \xrightarrow{m_{u_2\kappa_1}}\\ \vo \wdg \logthhku \to \vo \wdg \thh(\kup)_1,
\end{multline*}
 $L_{T(2)}(f^{hS^1})$ is an equivalence. Here, the first and the last maps are those provided by Proposition \ref{prop weight grading on log thh ku}; indeed, the last map above is the projection to the factor of $\vo \wdg \logthhku$ corresponding to $1 \in \z/(p-1)$.
\end{prop}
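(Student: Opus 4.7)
The plan is to identify $f$ explicitly on $V(1)$-homotopy and then to compare the homotopy fixed-point spectral sequences of source and target, invoking the $\kappa_1$-periodicity that emerges after $T(2)$-localization.

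First, I would compute $f_*$ on $V(1)$-homotopy. Let $j\co \vos\thh(\ell\mid v_1)\to\vos\logthhku$ denote the natural map. By Theorem \ref{theo log thh of ku and ell}, $j$ is the $\fp$-algebra homomorphism sending $\lambda_1,\kappa_1$ identically and $d\log v_1\mapsto -d\log u_2$. By Proposition \ref{prop properties of the higher Bott element b}(2), the self-map $m_{u_2\kappa_1}$ multiplies by the weight $1$ class $u_2\kappa_1$. Hence $m_{u_2\kappa_1}\circ j$ lands in the weight $1$ submodule $\{u_2\}\otimes E(\lambda_1,d\log u_2)\otimes P(\kappa_1)$, which by Proposition \ref{prop subgroups of the weight grading on log thh ku} is precisely the image of $\psi_1$; the projection $p_1$ is then an isomorphism onto $\vos\thh(ku_p)_1$. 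Consequently $f_*$, modulo the suspension by $2p+2$, is the $\fp$-linear map
\[E(\lambda_1,d\log v_1)\otimes P(\kappa_1)\to \{u_2\}\otimes E(\lambda_1,d\log u_2)\otimes P(\kappa_1),\quad x\mapsto u_2\kappa_1\cdot j(x),\]
which, viewed as a $P(\kappa_1)$-module map, is multiplication by $\kappa_1$ composed with an $\fp$-isomorphism of coefficient modules $E(\lambda_1,d\log v_1)\xrightarrow{\sim}u_2\kappa_1\cdot E(\lambda_1,d\log u_2)$.

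Next, I would study $f^{hS^1}$ through the homotopy fixed-point spectral sequences for source and target. Both $E_2$-pages are $P(t)$-polynomial algebras over their respective $\vos$-homotopy rings, with compatible differentials coming from the logarithmic THH HFS SS of \cite{rognes2018logthhofku}; the map $f$ is $P(t)$-linear multiplication by $u_2\kappa_1$ on $E_2$-pages, respecting $d\log v_1\mapsto -d\log u_2$. Furthermore, by Proposition \ref{prop weight zero inclusion for thh ku}, $\vo\wedge\thh(ku_p)_1$ is a module over $\vo\wedge\thh(\ell)\simeq \vo\wedge\thh(ku_p)_0$ through the multiplicative structure of $\thh(ku_p)$, so the target spectral sequence inherits a module structure over the source spectral sequence.

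To conclude, I would combine Proposition \ref{prop thh to log thh is an equiv after localization for ell} with the Ausoni-Rognes computation of $V(1)_*\tc(\ell)$: after $T(2)$-localization, $\kappa_1$ controls the periodicity on $(\vo\wedge\logthhell)^{hS^1}$, so multiplication by $\kappa_1$ is a $T(2)$-local equivalence there. Through the module action, multiplication by $\kappa_1$ is then also an equivalence on $L_{T(2)}(\vo\wedge\thh(ku_p)_1)^{hS^1}$. Since $f_*$ is $P(\kappa_1)$-linear multiplication by $\kappa_1$ together with an $\fp$-isomorphism of the coefficient modules $E(\lambda_1,d\log v_1)\xrightarrow{\sim}u_2\kappa_1\cdot E(\lambda_1,d\log u_2)$, it follows that $L_{T(2)}(f^{hS^1})$ is an equivalence. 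The main obstacle is verifying that the module action indeed makes $\kappa_1$ act as a $T(2)$-local equivalence on the target spectral sequence; this requires carefully tracking the differentials and confirming that the free rank one module structure on the $E(\lambda_1,d\log u_2)$-coefficients propagates the source's periodicity to the target.
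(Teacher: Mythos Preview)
Your computation of $\pi_* f$ is correct and agrees with the paper: the map is an isomorphism $E(\lambda_1,d\log v_1)\otimes P(\kappa_1)\xrightarrow{\sim}\{1\}\otimes E(\lambda_1,d\log u_2)\otimes P(\kappa_1)$ followed by multiplication by $u_2\kappa_1$. However, from this point the paper finishes in two lines, and you miss the key observation.

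The paper simply notes that multiplication by $u_2\kappa_1$ on $\{1\}\otimes E(\lambda_1,d\log u_2)\otimes P(\kappa_1)$ is an isomorphism in all sufficiently large degrees. Hence the cofiber $C$ of $f$ is bounded above in homotopy. Since $(-)^{hS^1}$ preserves coconnectivity, $C^{hS^1}$ is also bounded above, and therefore $L_{T(2)}(C^{hS^1})\simeq 0$. That is the entire argument; no spectral sequences, no module actions, and no input from the Ausoni--Rognes $\tc(\ell)$ computation are needed.

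By contrast, your route has a genuine gap. You assert that ``after $T(2)$-localization, $\kappa_1$ controls the periodicity on $(\vo\wedge\logthhell)^{hS^1}$'' and appeal to the Ausoni--Rognes computation of $V(1)_*\tc(\ell)$, but that computation concerns $\tc$, not $\ntc = \thh^{hS^1}$, and does not by itself yield that $\kappa_1$ acts invertibly on $T(2)_*\logthhell^{hS^1}$. You then compound this by transferring the unproved $\kappa_1$-invertibility to the target via a module action whose compatibility with the spectral sequence differentials you yourself flag as an obstacle. Ironically, the cleanest way to justify your $\kappa_1$-periodicity claim would be exactly the bounded-above-cofiber argument the paper uses for $f$ directly; once you see that, the spectral sequence detour becomes unnecessary.
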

\begin{proof}
Due to Theorem \ref{theo log thh of ku and ell} and Proposition \ref{prop subgroups of the weight grading on log thh ku}, $\pis f$ can be given by the composite map  
\begin{multline*}
     \pis f\co E(\lambda_1,d\log v_1) \otimes P(\kappa_1) \to \{1\} \otimes E(\lambda_1,d\log u_2) \otimes P(\kappa_1) \\\xrightarrow{\cdot u_2\kappa_1} \{u_2\} \otimes E(\lambda_1,d\log u_2) \otimes P(\kappa_1)
\end{multline*}
where the first map above sends $d\log v_1$ to $- d \log u_2$ and fixes the other generators and the second map multiplies by $u_2 \kappa_1$. The first map is an isomorphism and the second map above is an isomorphism in sufficiently large degrees. We deduce that $\pis f$ is an isomorphism in sufficiently large degrees. 

In particular, the cofiber of $f$, lets call it $C$, is bounded from above in homotopy. Therefore, $C^{hS^1}$ is also bounded from above in homotopy since $(-)^{hS^1}$ preserves coconnectivity. In particular, $L_{T(2)}(C^{hS^1}) \simeq 0$. This means that $L_{T(2)}(f^{hS^1})$ is an equivalence as desired.
\end{proof}

\begin{rema}
In the construction of the map $f$ above, if we used  $\vo \wdg \thh(ku_p)$ (together with its weight splitting and $m_{b}$) instead of $\vo \wdg \thh(\kupl \mid u_2)$, the proof above would fail to go through. This is because the cofiber of $f$ would not be bounded from above. This is precisely the reason why we use logarithmic THH for our computations. 
\end{rema}

The following is the non-logarithmic analogue of the the proposition above. 
\begin{prop}\label{prop mult by b on thhis a fixed point equivalence after telescopic localization }
For the composite $S^1$-equivariant map: 
\begin{multline*}
    g \co \Sigma^{2p+2} \vo \wdg    \thh(ku_p)_0 \to \Sigma^{2p+2} \vo \wdg \thh(ku_p) \xrightarrow{m_{b}}\\ \vo \wdg \thh(\kup) \to \vo \wdg \thh(\kup)_1,
\end{multline*}
$L_{T(2)}(g^{hS^1})$ is an equivalence. Here, the first  and the last maps are given by the $p-1$-grading on $\thh(ku_p)$. 
\end{prop}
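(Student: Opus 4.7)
The plan is to reduce the claim to Proposition \ref{prop log thh multiplication map is a local equivalence} by comparing $g$ with its logarithmic counterpart $f$ via the natural map $\alpha\co\thh(\kup)\to\logthhku$. Concretely, I would fit $g$ and $f$ into a commutative square of $S^1$-equivariant spectra
\[
\begin{tikzcd}
\Sigma^{2p+2}\vo\wdg\thh(\kup)_0 \ar[r,"g"] \ar[d] & \vo\wdg\thh(\kup)_1 \ar[d,equal] \\
\Sigma^{2p+2}\vo\wdg\logthhell \ar[r,"f"] & \vo\wdg\thh(\kup)_1
\end{tikzcd}
\]
in which the left-hand vertical is the composite $\vo\wdg\thh(\kup)_0 \xrightarrow{\simeq} \vo\wdg\thh(\ell) \to \vo\wdg\logthhell$ supplied by Proposition \ref{prop weight zero inclusion for thh ku} followed by the natural map to the logarithmic version.

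The commutativity decomposes into three pieces. First, the inclusion of the weight-$0$ summand interacts correctly with $\alpha$ because the splitting in Proposition \ref{prop weight grading on log thh ku} is defined, on the weight-$0$ factor, by precisely this composite. Second, the multiplication square for $m_b$ and $m_{u_2\kappa_1}$ is exactly Diagram \eqref{diag multlication commute with the thh and log thh}. Third, the projection onto the weight-$1$ summand of $\thh(\kup)$ matches the projection onto the weight-$1$ summand of $\logthhku$ composed with $\alpha$, because by Proposition \ref{prop weight grading on log thh ku} the weight-$1$ piece of the log splitting is included via $\alpha$ restricted to $\vo\wdg\thh(\kup)_1$, so on that summand the right-hand vertical is the identity.

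With commutativity in hand, apply $(-)^{hS^1}$ and then $L_{T(2)}$. The bottom row becomes an equivalence by Proposition \ref{prop log thh multiplication map is a local equivalence}; the right vertical is an identity; and the left vertical becomes a $T(2)$-equivalence after $S^1$-fixed points by combining Proposition \ref{prop weight zero inclusion for thh ku} with Proposition \ref{prop thh to log thh is an equiv after localization for ell}. Two-out-of-three then forces $L_{T(2)}(g^{hS^1})$ to be an equivalence. The main obstacle I expect is the bookkeeping around the third square above, namely carefully unwinding the splitting in Proposition \ref{prop weight grading on log thh ku} to confirm that, on the weight-$1$ summand, composing $\alpha$ with the projection onto $\vo\wdg\thh(\kup)_1$ inside $\logthhku$ recovers the projection used in the definition of $g$; the other two squares are essentially the content of Diagram \eqref{diag multlication commute with the thh and log thh} and the naturality of the weight-$0$ inclusion.
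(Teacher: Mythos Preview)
Your proposal is correct and follows essentially the same route as the paper: the paper builds the same commuting square (drawn there as a taller ladder with the intermediate steps spelled out), invokes Diagram \eqref{diag multlication commute with the thh and log thh} for the middle rung, and concludes by the same two-out-of-three argument using Propositions \ref{prop log thh multiplication map is a local equivalence}, \ref{prop thh to log thh is an equiv after localization for ell}, and \ref{prop weight zero inclusion for thh ku}. Your anticipated bookkeeping for the weight-$1$ projection is exactly what the paper dismisses with ``by the definition of the rest of the maps, one observes that the diagram above commutes,'' and your unwinding via Proposition \ref{prop weight grading on log thh ku} is the right way to justify it.
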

\begin{proof}
For this, we consider the following (up to homotopy) commuting diagram of $S^1$-equivariant spectra. 
\begin{equation*}
    \begin{tikzcd}
        \Sigma^{2p+2}\vo \wdg  \thh(ku_p)_0 \ar[d] \ar[r]\ar[ddd,"g",shift right = 10,bend right = 60,swap]& \Sigma^{2p+2}  \vo \wdg  \logthhell\ar[d]\ar[ddd,"f",shift left = 15,bend left = 60]\\
       \Sigma^{2p+2} \vo \wdg \thh(ku_p) \ar[d,"m_b"] \ar[r]& \Sigma^{2p+2} \vo \wdg \logthhku\ar[d,"m_{u_2\kappa_1}"] \\
        \vo \wdg \thh(\kup) \ar[r]\ar[d]& \vo \wdg \logthhku  \ar[d]\\
        \vo \wdg \thh(\kup)_1 \ar[r,"id"]&  \vo \wdg \thh(\kup)_1 \\
    \end{tikzcd}
\end{equation*}
 The sequence of vertical maps on the  left hand side is the composite defining  $g$  and the sequence of vertical maps on the right hand side is the composite defining  the map $f$ in Proposition \ref{prop log thh multiplication map is a local equivalence}. The upper horizontal map is given by the passage from THH to log THH by noting $\thh(ku_p)_0 \simeq \thh(\ell_p)$ (see Proposition \ref{prop weight zero inclusion for thh ku}). The lower horizontal map is the identity map. The inner square above is given by Diagram \eqref{diag multlication commute with the thh and log thh} which commutes up to homotopy. By the definition of the rest of the maps, one observes that the diagram above commutes. 
 
 Due to Proposition \ref{prop log thh multiplication map is a local equivalence}, $L_{T(2)}(f^{hS^1})$ is an equivalence. Furthermore, the top horizontal map is also an equivalence after applying $L_{T(2)}(-^{hS^1})$ due to Propositions \ref{prop thh to log thh is an equiv after localization for ell} and \ref{prop weight zero inclusion for thh ku}. Since the lower horizontal map is also an equivalence, we deduce that $L_{T(2)}(g^{hS^1})$ is an equivalence as desired. 
\end{proof}

For the rest, we also let $b \in \vos\thh(ku_p)^{hS^1}$ denote the image of the higher Bott element $b \in \vos \kth(ku_p)$ under the trace map $\vos \kth(\kup) \to \vos\thh(ku_p)^{hS^1}$.

\begin{corr}\label{corr b carries weight 0 to weight 1 isomorphically in fixed points}
After restricting and corestricting, multiplication by $b$ provides an isomorphism
\[\cdot b \co T(2)_* \Sigma^{2p+2}\thh(ku_p)^{hS^1}_0 \xrightarrow{\cong} T(2)_* \thh(ku_p)^{hS^1}_1 \]
between the sets of weight $0$ and weight $1$ classes in $ T(2)_* \thh(ku_p)^{hS^1}$. 
\end{corr}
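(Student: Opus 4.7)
The plan is to deduce this directly from Proposition \ref{prop mult by b on thhis a fixed point equivalence after telescopic localization } by unpacking how multiplication by $b$ interacts with the weight decomposition of $\thh(ku_p)^{hS^1}$.

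First, I would observe that since $(-)^{hS^1}$ is a right adjoint and thus preserves finite products and hence finite coproducts of spectra, the weight splitting \[V(1) \wdg \thh(ku_p) \simeq \bigvee_{i \in \z/(p-1)} V(1) \wdg \thh(ku_p)_i\] passes to $S^1$-fixed points to give \[\big(V(1) \wdg \thh(ku_p)\big)^{hS^1} \simeq \bigvee_{i \in \z/(p-1)} V(1) \wdg \thh(ku_p)_i^{hS^1},\] where I have used that $V(1)$ is finite so $V(1) \wdg (-)^{hS^1} \simeq (V(1) \wdg -)^{hS^1}$. By Proposition \ref{prop higher bott is homogenous of weight 1}, the element $b$ sits in weight $1$, so the map $m_b$ of Construction \ref{cons the map of cyclotomic spectra given by multiplication by b} shifts weight by $1$; in particular, the restriction of $m_b^{hS^1}$ to the weight $0$ summand followed by the projection onto the weight $1$ summand is precisely the map $g^{hS^1}$ from Proposition \ref{prop mult by b on thhis a fixed point equivalence after telescopic localization }.

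Next, I would invoke Proposition \ref{prop mult by b on thhis a fixed point equivalence after telescopic localization }, which gives that $L_{T(2)}(g^{hS^1})$ is an equivalence. Using Remark \ref{rema ttwo wdg e is given by ttwo localization}, the functor $T(2) \wdg -$ applied to any spectrum computes its $T(2)$-localization of the $V(1)$-smash, and therefore $T(2)_* g^{hS^1}$ is an isomorphism. Unwinding, this says that after smashing with $T(2)$ the composite \[\Sigma^{2p+2} \thh(ku_p)_0^{hS^1} \into \Sigma^{2p+2}\thh(ku_p)^{hS^1} \xrightarrow{\,\cdot\, b\,} \thh(ku_p)^{hS^1} \onto \thh(ku_p)_1^{hS^1}\] is an equivalence on $T(2)$-homotopy, which is exactly the claim of the corollary after identifying the trace-image of $b$ with the multiplier used in $m_b$.

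The only thing to verify carefully is the compatibility identification in the middle of the argument, namely that multiplication by $b \in \vos \thh(ku_p)^{hS^1}$ (defined via the trace from $\vos \kth(ku_p)$) really does agree with the self-map induced by $m_b^{hS^1}$ on $T(2)$-homotopy. This is essentially built into Construction \ref{cons the map of cyclotomic spectra given by multiplication by b}: the adjoint of $b \co \sph^{2p+2} \to \tc(V(1) \wdg \thh(ku_p))$ produces the cyclotomic map that we then multiply against the identity of $\thh(ku_p)$, and on $\pi_*$ of $hS^1$-fixed points this is literally left-multiplication by the image of $b$. No step here is delicate computationally; the main conceptual step is the observation that $b$'s weight-$1$ homogeneity makes $g^{hS^1}$ the single graded piece of $m_b^{hS^1}$ relevant to the corollary, after which Proposition \ref{prop mult by b on thhis a fixed point equivalence after telescopic localization } does all the work.
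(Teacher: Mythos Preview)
Your proposal is correct and follows essentially the same approach as the paper: identify the multiplication-by-$b$ map (restricted to weight $0$ and projected to weight $1$) with $\pi_* L_{T(2)}(g^{hS^1})$ via Remark~\ref{rema ttwo wdg e is given by ttwo localization} and the lax monoidal structure of $(-)^{hS^1}$, then invoke Proposition~\ref{prop mult by b on thhis a fixed point equivalence after telescopic localization }. The paper's proof is a two-line version of exactly this; your additional remarks about the weight splitting passing through $(-)^{hS^1}$ and the compatibility of $m_b$ with multiplication by $b$ simply make explicit what the paper leaves implicit.
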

\begin{proof}
By Remark \ref{rema ttwo wdg e is given by ttwo localization} and the lax monoidal structure of the fixed points functor $-^{hS^1}$, this map is given by $\pis L_{T(2)} (g^{hS^1})$ which is an isomorphism due to Proposition \ref{prop mult by b on thhis a fixed point equivalence after telescopic localization }.
\end{proof} 

\begin{corr}\label{corr b is a unit in fixed points}
In $T(2)_*\thh(ku_p)^{hS^1}$, we have $b^{p-1} = -v_2$. In particular, $b \in T(2)_*\thh(ku_p)^{hS^1}$ is a unit.
\end{corr}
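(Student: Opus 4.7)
The plan is to push the relation $b(b^{p-1}+v_2)=0$ from algebraic $K$-theory forward to $\thh(ku_p)^{hS^1}$ via the trace map, and then combine this with the weight-shifting isomorphism of Corollary~\ref{corr b carries weight 0 to weight 1 isomorphically in fixed points} to cancel a single copy of $b$.

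First, by Proposition~\ref{prop properties of the higher Bott element b}(3), the relation $b(b^{p-1}+v_2)=0$ holds in $\vos \kth(\kup)$. Applying the trace map \[\vos \kth(\kup)\to \vos\thh(ku_p)^{hS^1}\] and then extending coefficients to $T(2)$ (which is valid since $\vo \to T(2)$ is a ring map and all constructions commute with smashing with the finite spectrum $V(1)$), we deduce that $b(b^{p-1}+v_2)=0$ in $T(2)_*\thh(ku_p)^{hS^1}$.

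Next, I would analyse this identity with respect to the $p-1$-grading. By Proposition~\ref{prop higher bott is homogenous of weight 1}, the trace of $b$ lies in weight $1$. Hence $b^{p-1}$ is homogeneous of weight $p-1\equiv 0$ in $\z/(p-1)$. Since $v_2$ comes from $V(1)\subset T(2)$, which is concentrated in weight $0$, the class $b^{p-1}+v_2$ is homogeneous of weight $0$ in $T(2)_*\thh(ku_p)^{hS^1}$. Thus the identity $b\cdot(b^{p-1}+v_2)=0$ is an equation in the weight $1$ summand, obtained by applying multiplication by $b$ to a weight $0$ class. By Corollary~\ref{corr b carries weight 0 to weight 1 isomorphically in fixed points}, this multiplication map is an isomorphism from the weight $0$ part to the weight $1$ part of $T(2)_*\thh(ku_p)^{hS^1}$, and in particular injective. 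Therefore $b^{p-1}+v_2=0$, i.e.\ $b^{p-1}=-v_2$.

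Finally, since $v_2 \in T(2)_*$ is a unit by construction of $T(2)=V(1)[v_2^{\pm 1}]$, the equality $b^{p-1}=-v_2$ forces $b^{p-1}$ and hence $b$ to be a unit in $T(2)_*\thh(ku_p)^{hS^1}$. The only substantive step is the injectivity input from Corollary~\ref{corr b carries weight 0 to weight 1 isomorphically in fixed points}; everything else is formal manipulation of homogeneous classes in the $p-1$-grading and standard properties of the trace map.
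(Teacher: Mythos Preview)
Your proof is correct and follows essentially the same route as the paper's: push the relation $b(b^{p-1}+v_2)=0$ forward along the ring map $\ttws\kth(ku_p)\to\ttws\thh(ku_p)^{hS^1}$, use Proposition~\ref{prop higher bott is homogenous of weight 1} to see that $b^{p-1}+v_2$ is homogeneous of weight $0$, and invoke Corollary~\ref{corr b carries weight 0 to weight 1 isomorphically in fixed points} to cancel the leading factor of $b$. The only addition is that you spell out why $b^{p-1}=-v_2$ forces $b$ to be a unit, which the paper leaves implicit.
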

\begin{proof}
By Proposition \ref{prop properties of the higher Bott element b},  we have $b(b^{p-1}+v_2) = 0$ in  $T(2)_* \kth(ku_p)$. Using the ring map $\ttws \kth(ku_p) \to \ttws \thh(ku_p)^{hS^1}$,  we we obtain that 
\begin{equation}\label{eq bp is bv2 in negative tc}
b(b^{p-1}+v_2) = 0
\end{equation}
in $T(2)_*\thh(ku_p)^{hS^1}$.

Due to Proposition \ref{prop higher bott is homogenous of weight 1}, $b$ is  of weight $1$ in $\vos \thh(ku_p)^{hS^1}$. In particular, $b^{p-1}+v_2$ is of weight $0$  as $v_2$ is of weight $0$. However, multiplication by $b$ does not annihilate any non-trivial weight $0$ classes in $T(2)_*\thh(ku_p)^{hS^1}$ due to Corollary \ref{corr b carries weight 0 to weight 1 isomorphically in fixed points}. This, together with \eqref{eq bp is bv2 in negative tc} implies that $b^{p-1}+v_2= 0$ in $T(2)_*\thh(ku_p)^{hS^1}$ as desired.
\end{proof}

We are going to use the following two propositions to deduce that $L_{T(2)}\tc(m_b)$ is an equivalence; i.e.\ that $b$ is a unit in $\ttws \tc(ku_p)$.
\begin{prop}\label{prop mb fixed point is an equivalence}
The map \[L_{T(2)}(m_b^{hS^1})\co \Sigma^{2p+2}T(2) \wdg \thh(ku_p)^{hS^1} \xrightarrow{\simeq} T(2) \wdg \thh(ku_p)^{hS^1} \] 
is an equivalence.
\end{prop}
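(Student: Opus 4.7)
The plan is to exploit the $p-1$-grading together with Corollary \ref{corr b is a unit in fixed points}, which tells us $b^{p-1}=-v_2$ in $T(2)_*\thh(ku_p)^{hS^1}$. By Construction \ref{cons tc of kup and ku mod p are graded} and the remark at the end of Section \ref{sec graded THH, TC, neg TC}, $\thh(ku_p)^{hS^1}$ is a $p-1$-graded spectrum, and smashing with the graded spectrum $V(1)$ (concentrated in weight $0$) preserves this decomposition; the same holds after $T(2)$-localization since $V(1)\to T(2)$ is a map of weight-$0$ ring objects. Hence $T(2)\wdg \thh(ku_p)^{hS^1}\simeq \bigvee_{i\in\z/(p-1)}\bigl(T(2)\wdg \thh(ku_p)^{hS^1}\bigr)_i$.

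Since Proposition \ref{prop higher bott is homogenous of weight 1} places the class $b$ in weight $1$, the map $m_b^{hS^1}$ decomposes as a wedge of weight-shifting maps
\[
g_i\co \Sigma^{2p+2}\bigl(T(2)\wdg\thh(ku_p)^{hS^1}\bigr)_i\longrightarrow \bigl(T(2)\wdg\thh(ku_p)^{hS^1}\bigr)_{i+1},
\]
for $i\in\z/(p-1)$. It therefore suffices to prove that each $g_i$ becomes an equivalence after $L_{T(2)}$. The case $i=0$ is exactly the statement of Corollary \ref{corr b carries weight 0 to weight 1 isomorphically in fixed points}, and I will reduce the remaining cases to this one via a cyclic composition argument.

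The key observation is that the $(p-1)$-fold composite $g_{i+p-2}\circ\cdots\circ g_{i+1}\circ g_i$ is, up to the appropriate suspension, the self-map of the weight-$i$ summand given by multiplication by $b^{p-1}$; this follows because $m_b$ is built from the ring multiplication of $\thh(ku_p)$ (together with that of $V(1)$) in Construction \ref{cons the map of cyclotomic spectra given by multiplication by b}, and iterating uses associativity. By Corollary \ref{corr b is a unit in fixed points}, $b^{p-1}=-v_2$ in $T(2)_*\thh(ku_p)^{hS^1}$, and since $v_2$ is a unit in $T(2)_*$, multiplication by $b^{p-1}$ is an equivalence on $T(2)\wdg\thh(ku_p)^{hS^1}$; this equivalence respects the weight grading (as $p-1\equiv 0$) and hence restricts to an equivalence on each weight-$i$ summand.

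Now I would conclude using the following standard lemma: if $g\circ f$ and $f\circ h$ are both equivalences, then $f$ is an equivalence (it admits both a left and a right inverse). Applied with $f=g_i$, $g=g_{i+p-2}\circ\cdots\circ g_{i+1}$, and $h=g_{i+p-2}\circ\cdots\circ g_{i+1}$ viewed as the tail of the cyclic composite starting at $i+1$, this shows each $g_i$ is an equivalence after $L_{T(2)}$, and stringing the wedge back together finishes the proof. The only delicate point — and the main place where one must be careful — is verifying that the iterated composite $g_{i+p-2}\circ\cdots\circ g_i$ is genuinely homotopic to multiplication by $b^{p-1}$ as a map of $S^1$-equivariant $V(1)$-module spectra; this requires unpacking Construction \ref{cons the map of cyclotomic spectra given by multiplication by b} and using the associativity and commutativity of the $V(1)$- and $\thh(ku_p)$-multiplications, but presents no real obstacle beyond bookkeeping.
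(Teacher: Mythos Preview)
Your argument is correct, but it is considerably more elaborate than necessary and takes a different route from the paper. The paper's proof is essentially one line: by the lax monoidal structure of $(-)^{hS^1}$ and Remark~\ref{rema ttwo wdg e is given by ttwo localization}, the map $\pi_*L_{T(2)}(m_b^{hS^1})$ is exactly multiplication by $b$ on $T(2)_*\thh(ku_p)^{hS^1}$, and Corollary~\ref{corr b is a unit in fixed points} already says that $b$ is a unit there; hence the map is an equivalence. There is no need to decompose along the weight grading or to run a cyclic composition argument.

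Your approach instead re-derives the invertibility of multiplication by $b$ from the weaker consequence $b^{p-1}=-v_2$ of Corollary~\ref{corr b is a unit in fixed points}, by splitting $m_b$ into weight-shifting pieces $g_i$ and observing that the $(p-1)$-fold composite is multiplication by a unit on each weight piece. This is a valid strategy and has the minor virtue of making the role of the grading more explicit, but it duplicates work already packaged in the statement ``$b$ is a unit'' and introduces the bookkeeping issue you flag at the end (identifying the iterated $m_b$ with multiplication by $b^{p-1}$). In short: you are proving a corollary of ``$b$ is a unit'' by a method that amounts to reproving that $b$ is a unit; the paper just quotes it.
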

\begin{proof}
Using the lax structure of the homotopy fixed points functor $(-)^{hS^1}$ and Remark \ref{rema ttwo wdg e is given by ttwo localization}, one observes that the map 
$\pis L_{T(2)}(m_b^{hS^1})$
is precisely the map 
\[\ttws \thh(ku_p)^{hS^1} \to \ttws \thh(ku_p)^{hS^1}\]
given by multiplication by $b$. This is an isomorphism due to Corollary \ref{corr b is a unit in fixed points}.
\end{proof}

\begin{prop}\label{prop mb tate construction is an equivalence}
The map $L_{T(2)}(m_b^{tS^1})$ is an equivalence. 
\end{prop}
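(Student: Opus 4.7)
The approach parallels Proposition \ref{prop mb fixed point is an equivalence}: the plan is to identify $L_{T(2)}(m_b^{tS^1})$, after passage to homotopy, with multiplication by an element that can be shown to be a unit, using the relation $b^{p-1} = -v_2$ already extracted in Corollary \ref{corr b is a unit in fixed points}.

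First, I would invoke the lax symmetric monoidal structure of $(-)^{tS^1} \co \sp^{BS^1} \to \sp$ together with Remark \ref{rema ttwo wdg e is given by ttwo localization} to identify
\[\pis L_{T(2)}(m_b^{tS^1}) \co \ttws \thh(ku_p)^{tS^1} \to \ttws \thh(ku_p)^{tS^1}\]
with multiplication by the image of $b$ under the canonical lax monoidal natural transformation $\thh(ku_p)^{hS^1} \to \thh(ku_p)^{tS^1}$. The reasoning is exactly as in Proposition \ref{prop mb fixed point is an equivalence}: the map $m_b$ of Construction \ref{cons the map of cyclotomic spectra given by multiplication by b} was built from the class $b_1$ together with the $E_\infty$-multiplication of $\thh(ku_p)$, and any lax symmetric monoidal functor (here $(-)^{tS^1}$) converts this recipe into multiplication by the image of $b_1$ in the target ring, which is the class also denoted $b$ in $\ttws\thh(ku_p)^{tS^1}$.

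Second, I would observe that the canonical map $\thh(ku_p)^{hS^1} \to \thh(ku_p)^{tS^1}$ is a map of homotopy ring spectra and is $\ttws$-linear after $T(2)$-localization. Consequently, the identity $b^{p-1} = -v_2$ of Corollary \ref{corr b is a unit in fixed points} transfers to yield
\[b^{p-1} = -v_2 \in \ttws \thh(ku_p)^{tS^1}.\]
Since $v_2 \in \ttws$ is a unit, the class $b$ is itself a unit in $\ttws\thh(ku_p)^{tS^1}$, with explicit inverse $-v_2^{-1}b^{p-2}$. Combined with the first step, this shows that $\pis L_{T(2)}(m_b^{tS^1})$ is an isomorphism and hence $L_{T(2)}(m_b^{tS^1})$ is an equivalence.

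Given that the hard content, namely the relation $b^{p-1} = -v_2$ on homotopy fixed points, is already available from Corollary \ref{corr b is a unit in fixed points}, I do not anticipate a substantive obstacle. The only point requiring mild care is the lax-monoidal identification of $m_b^{tS^1}$ with multiplication by $b$, which is precisely the diagrammatic unwinding implicit in the proof of Proposition \ref{prop mb fixed point is an equivalence} and merely needs to be repeated with $(-)^{hS^1}$ replaced by $(-)^{tS^1}$.
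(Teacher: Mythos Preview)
Your argument is correct, but the paper takes a different and somewhat slicker route. Rather than reproving that $b$ is a unit in $\ttws\thh(ku_p)^{tS^1}$ by transporting the relation $b^{p-1}=-v_2$ along the ring map $can$, the paper observes that the canonical maps
\[
T(2)\wdg\thh(ku_p)^{hS^1}\xrightarrow{\ can\ }T(2)\wdg\thh(ku_p)^{tS^1}
\]
are themselves equivalences: their fibers are $T(2)\wdg\Sigma\thh(ku_p)_{hS^1}$, which vanishes because $\thh(ku_p)_{hS^1}$ is a $ku_p$-module and $L_{T(2)}ku_p\simeq 0$. The commuting square with $can$ horizontally and $L_{T(2)}(m_b^{hS^1})$, $L_{T(2)}(m_b^{tS^1})$ vertically then immediately transfers the equivalence from Proposition~\ref{prop mb fixed point is an equivalence} to the Tate construction.

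Your approach has the virtue of being a direct repetition of the preceding argument and avoids appealing to the norm cofiber sequence or the $T(2)$-acyclicity of $ku_p$-modules; the paper's approach is shorter and explains conceptually why nothing new needs to be checked on the Tate side. Either way the substance lies in Corollary~\ref{corr b is a unit in fixed points}, as you note.
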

\begin{proof}
Since $m_b$ is an $S^1$-equivariant map, we have the following commuting diagram given by the canonical natural transformation in \cite[Corollary \RomanNumeralCaps{1}.4.3]{nikolausscholze2018topologicalcyclic}. 
\begin{equation*}
    \begin{tikzcd}
      \Sigma^{2p+2}T(2) \wdg \thh(ku_p)^{hS^1} \ar[r,"can"]\ar[r,swap,"\simeq"]\ar[d,"L_{T(2)}(m_b^{hS^1})"]\ar[d,swap,"\simeq"]& \Sigma^{2p+2} T(2) \wdg \thh(ku_p)^{tS^1} \ar[d,"L_{T(2)}(m_b^{tS^1})"]\\
    T(2) \wdg \thh(ku_p)^{hS^1} \ar[r,"can"]\ar[r,swap,"\simeq"]& T(2) \wdg \thh(ku_p)^{tS^1} 
    \end{tikzcd}
\end{equation*}

The maps $can$ above are equivalences since their fibers are given by  \[T(2) \wdg \Sigma \thh(ku_p)_{hS^1}\simeq 0\] due to \cite[Corollary \RomanNumeralCaps{1}.4.3]{nikolausscholze2018topologicalcyclic}.   The left hand vertical map is an equivalence due to Proposition \ref{prop mb fixed point is an equivalence}; and therefore, the right hand vertical map is also an equivalence.

\end{proof}

\begin{prop}\label{prop b is a unit in tc kup}
The higher Bott element $b \in T(2)_{2p+2}\tc(ku_p)$ is a unit.
\end{prop}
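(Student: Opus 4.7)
The plan is to upgrade Propositions \ref{prop mb fixed point is an equivalence} and \ref{prop mb tate construction is an equivalence} to the level of $\tc(ku_p)$ using the fiber sequence defining $\tc$. Since $m_b$ is constructed in Construction \ref{cons the map of cyclotomic spectra given by multiplication by b} as a map of cyclotomic spectra, applying $\tc$ (which is lax symmetric monoidal by Definition \ref{defi of tc and trivial cyclotomic spectrum functor}) yields a self map $\tc(m_b) \co \Sigma^{2p+2} \tc(\kup) \to \tc(\kup)$ that, after smashing with $V(1)$, is given by multiplication by $b \in V(1)_{2p+2}\tc(\kup)$.

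After $p$-completion (which is automatic since we work with $V(1)$- and $T(2)$-homotopy), the standard fiber sequence of Nikolaus--Scholze becomes
\[ \tc(\kup) \to \thh(\kup)^{hS^1} \xrightarrow{\varphi_p - \mathrm{can}} \thh(\kup)^{tS^1}, \]
and naturality with respect to the cyclotomic map $m_b$ produces a commuting ladder of fiber sequences in which each vertical map is multiplication by $b$, respectively $b_1$. Applying the localization $L_{T(2)}$, the middle vertical map is an equivalence by Proposition \ref{prop mb fixed point is an equivalence} and the right vertical map is an equivalence by Proposition \ref{prop mb tate construction is an equivalence}. The long exact sequence (or two-out-of-three in the stable setting) then forces the left vertical map $L_{T(2)}\tc(m_b)$ to be an equivalence as well.

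Concretely this says that multiplication by $b$ is an isomorphism $T(2)_*\tc(\kup) \xrightarrow{\cong} T(2)_{*+2p+2}\tc(\kup)$ in every degree. In particular there exists a class $c \in T(2)_{-2p-2}\tc(\kup)$ with $bc = 1$, so $b$ is a unit.

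I do not expect any genuine obstacle: the substantive content—showing that multiplication by $b$ is a $T(2)$-equivalence on $\thh(\kup)^{hS^1}$ and on $\thh(\kup)^{tS^1}$—has already been carried out using the logarithmic THH comparison and the boundedness argument of Proposition \ref{prop log thh multiplication map is a local equivalence}. The remaining work is simply to assemble the map of fiber sequences and invoke two-out-of-three; the only point requiring minor care is identifying $V(1) \wedge \tc(m_b)$ with multiplication by $b$ on $V(1) \wedge \tc(\kup)$, which follows from the adjunction $(-)^{\triv} \dashv \tc$ used to construct $b_1$ from $b$ in Construction \ref{cons the map of cyclotomic spectra given by multiplication by b}.
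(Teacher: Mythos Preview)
Your proposal is correct and follows essentially the same approach as the paper: you use that $m_b$ is a map of cyclotomic spectra to obtain a map between the fiber sequences defining $\tc$, invoke Propositions \ref{prop mb fixed point is an equivalence} and \ref{prop mb tate construction is an equivalence} for the middle and right maps, and conclude by two-out-of-three that $L_{T(2)}\tc(m_b)$ is an equivalence. The paper's argument is identical in substance, presented via the same commuting ladder of fiber sequences.
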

\begin{proof}
Recall that $m_b$ is a map of cyclotomic spectra by construction. Furthermore, the map 
\[\pis L_{T(2)} \tc(m_b) \co \ttws \Sigma^{2p+2}\tc(ku_p) \to \ttws \tc(ku_p)\]
is given by  multiplication by $b$. Therefore, it is sufficient to show that $L_{T(2)} \tc(m_b)$ is an equivalence. Since $m_b$ is cyclotomic, this induces a map of fiber sequences as follows, see \cite[Lemma \RomanNumeralCaps{2}.4.2]{nikolausscholze2018topologicalcyclic}.
\begin{equation*}
    \begin{tikzcd}[column sep = huge]
            \Sigma^{2p+2} T(2) \wdg \tc(ku_p) \ar[r,"L_{T(2)}\tc(m_b)"]\ar[d] &T(2) \wdg \tc(ku_p)\ar[d] \\
          \Sigma^{2p+2}T(2) \wdg \thh(ku_p)^{hS^1} \ar[d,"\varphi_p^{hS^1} - can"]\ar[r,"L_{T(2)}(m_b^{hS^1})"] \ar[r,swap,"\simeq"]& T(2) \wdg \thh(ku_p)^{hS^1} \ar[d,"\varphi_p^{hS^1} - can"]\\
          \Sigma^{2p+2} T(2) \wdg \thh(ku_p)^{tS^1} \ar[r,"L_{T(2)}(m_b^{tS^1})"] \ar[r,swap,"\simeq"] & T(2) \wdg \thh(ku_p)^{tS^1}
    \end{tikzcd}
\end{equation*}
The middle and the bottom horizontal maps are equivalences due to Propositions \ref{prop mb fixed point is an equivalence} and \ref{prop mb tate construction is an equivalence}. Since this is a map of fiber sequences, we deduce that the top horizontal map is an equivalence as desired. 
\end{proof}

\begin{theo}[Theorem \ref{theo Ausoni algebraic k theory of ku}]\label{theo restated Ausoni algebraic k theory of ku}
Let $p>3$ be a prime. There is an isomorphism of $\fp[b]$-algebras:
\[\ttws \kth(ku) \cong \ttws \kth(\ell) [b]/(b^{p-1}+v_2),\]
where $\lv b \rv = 2p+2$.

\end{theo}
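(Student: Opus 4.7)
My plan is to reduce the computation of $\ttws\kth(ku)$ to that of $\ttws\tc(\kup)$ via the cyclotomic trace, and then exploit the $p-1$-grading on $\tc(\kup)$ together with the fact that $b$ is a unit of weight $1$.

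For the reduction: since $T(2)$ is $p$-complete, we get $\ttws\kth(ku)\cong\ttws\kth(\kup)$. Since $\kup$ is connective, the Dundas--Goodwillie--McCarthy theorem combined with the vanishing of $\ttws\kth(\zp)$ (Land--Mathew--Meier--Tamme purity) gives $\ttws\kth(\kup)\cong\ttws\tc(\kup)$, and similarly $\ttws\kth(\ell)\cong\ttws\tc(\ell_p)$. It therefore suffices to compute $\ttws\tc(\kup)$ as a graded $\ttws\tc(\ell_p)$-algebra.

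By Construction \ref{cons tc of kup and ku mod p are graded}, $\tc(\kup)$ is a $p-1$-graded $E_\infty$-ring, so
\[\ttws\tc(\kup)\cong\bigoplus_{i\in\z/(p-1)}\ttws\tc(\kup)_i\]
as a $\ttws\tc(\kup)_0$-module. Proposition \ref{prop weight zero inclusion for thh ku} identifies the weight-zero summand with $\ttws\tc(\ell_p)\cong\ttws\kth(\ell)$. Propositions \ref{prop higher bott is homogenous of weight 1} and \ref{prop b is a unit in tc kup} together show that $b\in\ttws\tc(\kup)$ in degree $2p+2$ is a unit of weight $1$. Hence multiplication by $b^i$ provides $\ttws\kth(\ell)$-linear isomorphisms between $\ttws\tc(\kup)_0$ and the other weight components, and the graded $\ttws\kth(\ell)$-algebra map
\[\ttws\kth(\ell)[b]\to\ttws\tc(\kup)\]
sending $b$ to $b$ is surjective with kernel generated by a single relation of the form $b^{p-1}-c$ for some $c\in\ttws\kth(\ell)$ in degree $2p^2-2$.

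To identify $c$: Proposition \ref{prop properties of the higher Bott element b}(3) gives $b(b^{p-1}+v_2)=0$ in $\vos\kth(\kup)$, and this relation persists in $\ttws\tc(\kup)$ under the cyclotomic trace. Since $b$ is a unit there, we conclude $b^{p-1}+v_2=0$, i.e.\ $c=-v_2$. Transporting back via the identification $\ttws\kth(ku)\cong\ttws\tc(\kup)$ then yields the claimed presentation. The genuinely hard work, namely constructing the weight grading on $\tc(\kup)$ and showing via logarithmic THH that $b$ becomes a unit after $T(2)$-localization, has already been carried out in the preceding sections; the only delicate point remaining is the systematic application of the DGM-type rigidity results to shuttle between $\kth$ and $\tc$ at $T(2)$.
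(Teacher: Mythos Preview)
Your argument is correct and follows the paper's route exactly: reduce to $\tc$ via purity and DGM, then combine the $p-1$-grading, the identification $\tc(\kup)_0\simeq\tc(\ell_p)$ (Proposition~\ref{prop weight zero inclusion for thh ku}), and the fact that $b$ is a weight-$1$ unit (Propositions~\ref{prop higher bott is homogenous of weight 1} and~\ref{prop b is a unit in tc kup}) to see that multiplication by $b^i$ identifies each weight piece with a shifted copy of $\ttws\tc(\ell_p)$. One caveat: your final paragraph, which transfers the relation $b(b^{p-1}+v_2)=0$ along the trace to pin down $c=-v_2$, is not needed for the graded-abelian-group statement and tacitly assumes that $\kth\to\tc$ is a ring map---the paper deliberately avoids this assumption (Remark~\ref{rema lax monoidal comparison of genuine and naive TC}) and records the resulting ring isomorphism only conditionally in Remark~\ref{rema action of b on kth of ku}.
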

\begin{proof}
Due to \cite[Purity Theorem]{land2020purity} and the Dundas-Goodwillie-McCarthy theorem, we have \[T(2)_*\kth(ku) \cong T(2)_*\tc(ku_p) \textup{\ \ and\ \ } T(2)_*\kth(\ell) \cong T(2)_*\tc(\ell_p).\]
Therefore, it is sufficient to prove the corresponding isomorphism of $\fp[b]$-algebras:
\begin{equation*}\label{eq quotient of poly to tc kup}
     h \co \ttws \tc(\ell_p) [b]/(b^{p-1}+v_2) \xrightarrow{\cong} \ttws \tc(ku_p),
\end{equation*}
at the level of topological cyclic homology.  Due to Proposition \ref{prop properties of the higher Bott element b}, $b(b^{p-1}+v_2) = 0$ in $T(2)_*\tc(ku_p)$. Since $b$ is a unit, we deduce that $b^{p-1} = -v_2$ in $T(2)_*\tc(ku_p)$. This provides the map $h$. 
Since $\ttws \tc(ku_p)$ is a $p-1$-graded ring with a unit $b$ in weight $1$ (Propositions \ref{prop higher bott is homogenous of weight 1} and \ref{prop b is a unit in tc kup}), it is periodic in its weight direction. In other words, multiplication by $b^i$ provides an isomorphism 
\[\cdot b^i \co \ttws\Sigma^{(2p+2)i}\tc(ku_p)_0 \xrightarrow{\cong} \ttws \tc(ku_p)_i\]
for each $0< i<p-1$. Furthermore, $\ttws \tc(ku_p)_0 \cong \ttws \tc(\ell_p)$ due to Proposition \ref{prop weight zero inclusion for thh ku}. This proves that $h$ is an  isomorphism as desired. 
\end{proof}

\begin{rema}\label{rema lax monoidal comparison of genuine and naive TC}

In the proof above, we used a lax symmetric monoidal transformation $\kth(-) \to \tc(-)$ given by the cyclotomic trace map. This follows by Blumberg, Gepner and Tabuada   \cite{blumberg2014uniqueness} if one uses the older definition of the lax symmetric monoidal functor $\tc(-)$. On the other hand, we defined the lax symmetric monoidal structure on $\tc(-)$ using cyclotomic spectra as in \cite{nikolausscholze2018topologicalcyclic} (Definition \ref{defi of tc and trivial cyclotomic spectrum functor}). To obtain the lax symmetric monoidal trace map for $\tc$ as in Definition \ref{defi of tc and trivial cyclotomic spectrum functor}, we only need to show that the forgetful functor from genuine cyclotomic spectra to cyclotomic spectra (in \cite[Section \RomanNumeralCaps{2}.6]{nikolausscholze2018topologicalcyclic}) is symmetric monoidal. By \cite[Construction IV.2.1]{nikolausscholze2018topologicalcyclic}, a lax symmetric monoidal functor into cyclotomic spectra amounts to a symmetric monoidal functor $F$ into $\sp^{BS^1}$ and a lax symmetric monoidal natural transformation   $F\to F^{tC_p}$. The functor from genuine cyclotomic spectra to $\sp^{BS^1}$ is given by the underlying Borel $S^1$-spectrum, which is a symmetric monoidal functor. Since the natural transformation between $C_p$-geometric fixed points and the $C_p$-Tate construction is lax symmetric monoidal, this shows that the forgetful functor to  cyclotomic spectra is symmetric monoidal as desired. 

\end{rema}

\subsection{Algebraic $K$-theory of the $2$-periodic Morava $K$-theory}\label{subsec kthry of mrv kthry proved and the fraction field}
At this point, Theorem \ref{theo kth of ku mod p} follows easily from our previous arguments. 

\begin{theo}[Theorem \ref{theo kth of ku mod p}]\label{theo restated kth of ku mod p}
Let $p>3$ be a prime. There is an isomorphism of $\fp[b]$-modules:
\[T(2)_*\kth(ku/p) \cong T(2)_* \kth(\ell/p)\otimes_{\fp[v_2]} \fp[b]\]
with $\lv b \rv = 2p+2$ and in the tensor product above, we take $v_2 = -b^{p-1}$. 
\end{theo}
\begin{proof}
As before, it is sufficient to  prove the same identity at the level of topological cyclic homology, i.e.\ we need to show that 
\[T(2)_*\tc(ku/p) \cong T(2)_* \tc(\ell/p)\otimes_{\fp[v_2]} \fp[b].\]

Recall from Construction \ref{cons tc of kup and ku mod p are graded} that  $\tc(ku/p)$ is a module over $\tc(ku_p)$ in  $p-1$-graded spectra. By Proposition \ref{prop weight zero inclusion of thh two periodic morava kthry}, we have 
\[T(2)_*\tc(ku/p)_0 \cong T(2)_*\tc(\ell/p).\]
Furthermore, there is a unit $b \in T(2)_*\tc(ku_p)$ of weight $1$, (Propositions \ref{prop higher bott is homogenous of weight 1} and \ref{prop b is a unit in tc kup}). Therefore, multiplying by powers of $b$ induces isomorphisms
\[\cdot b^i \co T(2)_*\Sigma^{(2p+2)i}\tc(ku/p)_0 \xrightarrow{\cong} T(2)_*\tc(ku/p)_i\]
for each $0<i<p-1$. This provides the desired result. 

\end{proof}

Now we prove Theorem \ref{theo k theory of the fraction field of ku} verifying the the conjectural formula of Ausoni and Rognes \cite[Section 3]{ausoni2009kthryoffractionfield} that we stated in \eqref{eq conjectural formula of ausoni rognes}. For this, note that due to \cite[Purity Theorem]{land2020purity}, $T(2) \wdg \kth(\ff (ku_p))$ and $T(2) \wdg \kth(\ff(\ell_p))$ are given by the cofibers of the transfer maps $T(2) \wdg \kth(ku/p) \to T(2) \wdg \kth(ku_p)$ and  $T(2) \wdg \kth(\ell/p) \to T(2) \wdg \kth(\ell_p)$ respectively, see \cite[Diagrams 3.1 and 3.10]{ausoni2009kthryoffractionfield}. 

\begin{theo}\label{theo k theory of the fraction field of ku}
Let $p>3$ be a prime. There is an isomorphism of $\fp[b]$-modules:
\begin{equation}\label{eq ttwo homology of fraction field of ku}
T(2)_* \kth(\ff (ku_p))  \cong T(2)_*\kth(\mathit{ff}(\ell_p))\otimes_{\fp[v_2]} \fp[b]
\end{equation}
where $v_2 = -b^{p-1}$ and $\lv b \rv = 2p+2$.

\end{theo}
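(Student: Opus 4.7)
The plan is to reduce the identification of $T(2)_*\kth(\ff(ku_p))$ to that of $T(2)_*\kth(\ff(\ell_p))$ via a projection formula argument applied to the transfer map. By the Purity Theorem \cite{land2020purity}, smashing with $T(2)$ yields cofiber sequences
\[T(2) \wdg \kth(ku/p) \xra{\tr} T(2) \wdg \kth(ku_p) \to T(2) \wdg \kth(\ff(ku_p))\]
and the analogue for $\ell_p$. Since $ku/p \simeq \ell/p \wdg_{\ell_p} ku_p$ by the construction in Section \ref{subsec two periodic morava kthry via root adjunction}, the naturality of the transfer under base change along $\ell_p \to ku_p$ yields a commutative square whose horizontal arrows are the transfers and whose vertical arrows are induced by $\ell_p \to ku_p$ and $\ell/p \to ku/p$.

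Given the lax symmetric monoidal assumption on the trace, Remarks \ref{rema action of b on kth of ku} and \ref{rema action of b on kth ku mod p} identify $T(2)_*\kth(ku_p) \cong T(2)_*\kth(\ell_p)[b]/(b^{p-1}+v_2)$ as an $\fp[b]$-algebra and $T(2)_*\kth(ku/p) \cong T(2)_*\kth(\ell/p) \otimes_{\fp[v_2]} \fp[b]$ as an $\fp[b]$-module. Under these identifications, the two vertical arrows in the commutative square become the canonical inclusions $x \mapsto x \otimes 1$ of the weight-zero summands.

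The crucial step is to establish that $\tr \co T(2) \wdg \kth(ku/p) \to T(2) \wdg \kth(ku_p)$ is $\fp[b]$-linear. This follows from the projection formula for the transfer: $ku/p$ is a perfect $ku_p$-module (being the cofiber of $p \co ku_p \to ku_p$), so the transfer is a map of $\kth(ku_p)$-modules where $\kth(ku/p)$ carries the module structure induced by the $E_1$-ring map $ku_p \to ku/p$. Combining $\fp[b]$-linearity with the commutative square, the transfer on the weight-zero summand $T(2)_*\kth(\ell/p) \otimes 1$ is just the transfer $f \co T(2)_*\kth(\ell/p) \to T(2)_*\kth(\ell_p)$ for $\ell_p$ composed with the inclusion into weight zero of $T(2)_*\kth(ku_p)$, and by $\fp[b]$-linearity the entire map is identified with $f \otimes \id_{\fp[b]}$. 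Since $\fp[b]$ is free of rank $p-1$ over $\fp[v_2]$ with basis $\{1, b, \ldots, b^{p-2}\}$ (using $v_2 = -b^{p-1}$), the functor $- \otimes_{\fp[v_2]} \fp[b]$ is exact; taking cofibers then yields the desired isomorphism of $\fp[b]$-modules.

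The main obstacle is to rigorously justify that the $\fp[b]$-module structure on $T(2)_*\kth(ku/p)$ coming from the lax monoidal trace combined with the isomorphism of Theorem \ref{theo kth of ku mod p} agrees with that induced by the ring map $ku_p \to ku/p$. This compatibility is essentially automatic because $b$ lifts to $T(2)_*\kth(ku_p)$, and its image in $T(2)_*\kth(ku/p)$ under the natural ring map corresponds to $1 \otimes b$ under the isomorphism $T(2)_*\kth(ku/p) \cong T(2)_*\kth(\ell/p) \otimes_{\fp[v_2]} \fp[b]$, by naturality of the trace and of the root-adjunction splitting.
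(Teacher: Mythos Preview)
Your proposal is correct and follows essentially the same route as the paper: both arguments use the base-change square relating the transfers for $\ell_p$ and $ku_p$, invoke the projection formula to make the transfer $\kth(ku_p)$-linear (hence $\fp[b]$-linear after applying $T(2)_*$), identify the weight-zero component of the transfer with the $\ell$-level transfer, and then propagate to all weights via multiplication by powers of $b$. The only cosmetic difference is that the paper carries out the last step as an explicit weight-by-weight spectrum-level decomposition $\tau \simeq \bigvee_i \tau_i$ with each $\tau_i$ conjugate to $\tau_0$ by multiplication with $b^i$, whereas you package the same content as the single algebraic identity $\tau = f \otimes_{\fp[v_2]} \id_{\fp[b]}$ and appeal to exactness of $-\otimes_{\fp[v_2]}\fp[b]$.
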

\begin{proof}
The trace map $\kth(-) \to \tc(-)$ is a $T(2)$-equivalence for $ku_p$, $ku/p$, $\ell_p$ and $\ell/p$ \cite[Corollary E]{land2020purity}. Therefore, we obtain that $T(2) \wdg \kth(ku_p)$ is a monoid in the homotopy category of $p-1$-graded spectra and $T(2) \wdg \kth(ku/p)$ is a  left module over $T(2) \wdg \kth(ku_p)$ in the homotopy category of $p-1$-graded spectra. 

Let 
\[\tau \co T(2) \wdg\kth(ku/p) \to T(2) \wdg \kth(ku_p)\]
denote the map induced by transfer along $ku_p \to ku/p$. Since 
\[T(2) \wdg \kth(ku/p) \simeq \bigvee_{0\leq i<p-1 } T(2) \wdg \kth(ku/p)_i,\]
it is sufficient to understand the restriction of $\tau$ to $T(2) \wdg \kth(ku/p)_i$ for each $i$. For $i=0$, we consider the commuting diagram of $E_1$-rings:
\begin{equation*}
 \begin{tikzcd}
     \ell_p \ar[r]\ar[d] & \ell/p\ar[d]\\
     ku_p \ar[r] & ku/p.
 \end{tikzcd}
\end{equation*}
 We obtain a commuting diagram of spectra:
\begin{equation}\label{diag transfer and extension commute}
 \begin{tikzcd}
     \kth(\ell/p) \ar[r]\ar[d] & \kth(\ell_p)\ar[d]\\
     \kth(ku/p) \ar[r] & \kth(ku_p)
 \end{tikzcd}
\end{equation}
by using the following equivalence of the corresponding functors induced at the level of module categories:
\[ku_p \wdg_{\ell_p} -  \simeq   ( ku_p \wdg_{\ell_p} \ell/p) \wdg_{\ell/p} - \simeq ku/p \wdg_{\ell/p} -.\]

Let $\tau' \co T(2) \wdg \kth(\ell/p) \to T(2) \wdg \kth(\ell)$ denote the map induced by transfer along $\ell \to \ell/p$. Diagram \eqref{diag transfer and extension commute} provides that the restriction of $\tau$ to  $T(2) \wdg \kth(ku/p)_0$ is given by the following map. 
\begin{multline*}
T(2) \wdg \kth(ku/p)_0 \simeq T(2) \wdg  \kth(\ell/p) \xrightarrow{\tau'}\\ T(2) \wdg\kth(\ell_p) \simeq T(2) \wdg\kth(ku_p)_0 \to T(2) \wdg \kth(ku_p) \end{multline*}
Here, the last map is the inclusion of the weight $0$-component and the equivalences above are provided by Propositions \ref{prop weight zero inclusion for thh ku} and \ref{prop weight zero inclusion of thh two periodic morava kthry}. Let $\tau_0$ denote the map \[\tau_0 \co T(2) \wdg \kth(ku/p)_0 \to T(2) \wdg\kth(ku_p)_0\]in the composite above.

Let $0<i<p-1$. To describe the restriction of $\tau$ to $T(2) \wdg \kth(ku/p)_i$, we use the fact that $\tau$ is a map of $T(2) \wdg \kth(ku_p)$-modules in the stable homotopy category, see \cite[Section 3]{ausoni2009kthryoffractionfield}. By Propositions  \ref{prop higher bott is homogenous of weight 1} and \ref{prop b is a unit in tc kup},  there is a unit $b^i \in T(2)_* \kth(ku_p)$ of weight $i$. Omitting the  suspension functor,  let $m_1 \co T(2) \wdg \kth(ku/p)_0 \to T(2) \wdg \kth(ku/p)_i$ and $m_2 \co T(2) \wdg \kth(ku_p)_0 \to T(2) \wdg \kth(ku_p)_i$ denote the equivalences given by  multiplication with $b^i \in T(2)_* \kth(ku_p)$. Abusing notation, let $m_1$ and $m_2$ also denote the respective endomorphisms of $T(2) \wdg \kth(ku/p)$ and $T(2) \wdg \kth(ku_p)$.  We have the following up-to homotopy  commuting diagram of spectra. 
\begin{equation*}
    \begin{tikzcd}[ column sep=scriptsize]
    & & T(2) \wdg \kth(ku_p)_0 \ar[d] \ar[dr,"m_2"]\ar[dr,"\simeq",swap]\\
        T(2) \wdg \kth(ku/p)_0 \ar[r] \ar[urr,"\tau_0"] \ar[d,"m_1"]\ar[d,"\simeq",swap] & \ar[d,"m_1"]T(2) \wdg \kth(ku/p) \ar[r,"\tau"] & T(2) \wdg \kth(ku_p) \ar[d,"m_2"] & T(2) \wdg \kth(ku_p)_i \ar[ld] \\
        T(2) \wdg \kth(ku/p)_i \ar[r]  &T(2) \wdg \kth(ku/p) \ar[r,"\tau"]  & T(2) \wdg \kth(ku_p) &  
    \end{tikzcd}
\end{equation*}
Here, the unmarked arrows are the canonical inclusions. The bottom left hand square and the right hand diagram commute since $T(2) \wdg \kth(ku/p)$ and $T(2) \wdg \kth(ku_p)$ are modules over $T(2) \wdg \kth(ku_p)$ in the homotopy category of $p-1$-graded spectra. The inner square commutes as $\tau$ is a map of modules over $T(2) \wdg \kth(ku_p)$ in the homotopy category of spectra. The top left diagram commutes due to our previous identification of $\tau_0$.

The commuting diagram above shows that the restriction of $\tau$ to $T(2) \wdg \kth(ku/p)_i$ is given by the composite: 
\begin{multline*}
T(2) \wdg \kth(ku/p)_i\xrightarrow[\simeq]{m_1^{-1}} T(2) \wdg \kth(ku/p)_0 \xrightarrow{\tau_0} \\T(2) \wdg \kth(ku_p)_0 \xrightarrow[\simeq]{m_2} T(2) \wdg \kth(ku_p)_i \to T(2) \wdg \kth(ku_p).
\end{multline*}
Letting $\tau_i \co T(2) \wdg \kth(ku/p)_i \to T(2) \wdg \kth(ku_p)_i$ be  as in the composite above, we obtain that $\tau$ is given by 
\[\tau \simeq \bigvee_{0\leq i<p-1}\tau_i\]
and that each $\tau_i$ is equivalent to $\tau_0$ up to a suspension. Since $\tau_0$ is equivalent to $\tau'$, this provides the desired splitting of the cofiber of $\tau$ as a coproduct of shifted copies of $T(2) \wdg \kth(\ff(\ell_p))$. This proves \eqref{eq ttwo homology of fraction field of ku} as an isomorphism of abelian groups. Due to the argument above, the resulting cofactors of $T(2) \wdg \kth(ku/p)$ are connected through multiplication by $b$ and this shows that \eqref{eq ttwo homology of fraction field of ku} is an isomorphism of $\fp[b]$-modules.

\end{proof}

\appendix

\section{Graded cyclotomic spectra} \label{appendix graded tc}

For this  section, let $m$ be a positive integer such that $m \mid p-1$. Here, our goal is to construct a symmetric monoidal  functor \[\thh \co \alg_{E_1}(\fun(\z/m,\sp)) \to  \fun(\z/m,\cycsp)\]
and show that the resulting diagram: 
\begin{equation}\label{diag main diag of the appendix}
\begin{tikzcd}
    \alg_{E_1}(\fun(\z/m,\sp)) \ar[d] \ar[r,"\thh"] & \fun(\z/m,\cycsp) \ar[r,"\tc"] \ar[d,"D"] & \fun(\z/m,\sp)\ar[d,"D'"]\\
    \alg_{E_1}(\sp) \ar[r,"\thh"] & \cycsp \ar[r,"\tc"] & \sp
    \end{tikzcd}\tag{\ref{diag for graded thh and graded tc}}
\end{equation}
of lax symmetric monoidal functors commutes. Note that this diagram is also stated as Diagram \eqref{diag for graded thh and graded tc} in Section \ref{sec graded THH, TC, neg TC}. Recall that the vertical functors above are given by left Kan extending through $\z/m\to 0$, i.e.\ they provide the corresponding underlying objects. Furthermore, the upper right hand horizontal arrow $\tc$ is given by levelwise application of $\tc \co \cycsp \to \sp$.

We first prove the following proposition which states that the right hand square in Diagram \eqref{diag for graded thh and graded tc} commutes. 

\begin{prop}\label{prop graded TC}
Let $m>0$ be a positive integer such that $m\mid p-1$. Then the following diagram:
\begin{equation*}
\begin{tikzcd}
\fun(\z/m,\cycsp)\ar[d,"D"] \ar[r,"\tc"]& \ar[d,"D'"]\fun(\z/m,\sp)\\
 \ar[r,"\tc"]\cycsp & \sp,
\end{tikzcd}
\end{equation*}
of lax symmetric monoidal functors commutes. In other words, the right hand side of Diagram \eqref{diag for graded thh and graded tc}, commutes.
\end{prop}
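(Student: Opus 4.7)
The plan is to construct a lax symmetric monoidal natural transformation $\alpha \co D' \circ \tc \Rightarrow \tc \circ D$ (with the first $\tc$ understood as the levelwise one) and show it is an equivalence. The key reduction is that $D$ and $D'$ both compute the finite coproduct $X \mapsto \coprod_{i \in \z/m} X_i$, since $\z/m$ is a finite discrete symmetric monoidal $\infty$-category and left Kan extension along $\z/m \to 0$ coincides with this coproduct. Consequently, on underlying objects the claim reduces to comparing $\coprod_i \tc(X_i)$ with $\tc(\coprod_i X_i)$.

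To construct $\alpha$ canonically, I would apply the mate correspondence to the adjunctions $D \dashv c$ and $D' \dashv c'$, where $c$ and $c'$ are the constant-diagram right adjoints. The required natural equivalence $c' \circ \tc \simeq \tc \circ c$ is essentially tautological: both sides send $Y \in \cycsp$ to the constant $\z/m$-diagram at $\tc(Y)$. Its mate is the map $\alpha$ whose value at $X$ is the assembly $\coprod_i \tc(X_i) \to \tc(\coprod_i X_i)$ induced by the inclusions $X_i \hookrightarrow \coprod_i X_i$ in $\cycsp$. To see that $\alpha(X)$ is an equivalence, I would note that $\tc = \map_{\cycsp}(\sph^{\triv}, -)$ is a right adjoint by Definition \ref{defi of tc and trivial cyclotomic spectrum functor}, hence preserves all small limits; since $\sp$ and $\cycsp$ are stable and $\z/m$ is finite, the finite coproduct in question is a biproduct that $\tc$ therefore preserves.

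The main obstacle is promoting this equivalence of underlying functors to one of lax symmetric monoidal functors. Although $D$ and $D'$ are strongly symmetric monoidal (as left Kan extensions along a symmetric monoidal functor of discrete symmetric monoidal $\infty$-categories), the right adjoints $c$, $c'$ and the functor $\tc$ are only lax symmetric monoidal, so the mate correspondence must be executed inside a suitable $(\infty,2)$-category of lax symmetric monoidal $\infty$-categories. The mate of a lax symmetric monoidal equivalence remains lax symmetric monoidal there; alternatively, one can directly verify that $\alpha$ commutes with the lax structure maps of $\tc$ and of its levelwise extension, invoking \cite[Corollary 3.7]{nikolaus2016stablemultyoneda} to align the Day-convolution based lax structure with the levelwise one.
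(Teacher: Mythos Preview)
Your proposal is correct and follows essentially the same route as the paper: both construct the comparison map as the mate of the tautological equivalence $R'\circ\tc \simeq \tc^{\textup{level}}\circ R$ (your $c',c$ are the paper's restriction functors $R',R$), identify it as the assembly map $\coprod_i \tc(X_i)\to \tc(\coprod_i X_i)$, and conclude by observing that finite coproducts in the stable categories $\cycsp$ and $\sp$ are biproducts preserved by $\tc$. The paper is somewhat terser about the lax symmetric monoidal upgrade---it simply invokes the unit of $D\dashv R$ (automatically lax monoidal since $D$ is strong monoidal) and the pre/post-composition identity $R'\tc \simeq \tc^{\textup{level}} R$---whereas you flag this as the main obstacle and propose heavier $(\infty,2)$-categorical machinery; the paper's lighter treatment already suffices.
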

\begin{proof}
Let $R$ and $R'$ denote the right adjoints of $D$ and $D'$ respectively. The functors $R$ and $R'$ are given by restriction along $\z/m \to 0$. Here, we denote the top horizontal arrow by $\tc^{\textup{level}}$ to distinguish it from the bottom horizontal arrow $\tc$. 

First, we show that there is a lax symmetric monoidal natural transformation 
\[\phi \co D'\tc^{\textup{level}} \to \tc D,\]
later, we complete the proof by showing that $\phi$ is an equivalence. By adjunction, it is sufficient to obtain a lax symmetric monoidal transformation
\[\tc^{\textup{level}} \to R'\tc D.\]
Since precomposition followed by postcomposition agrees with postcomposition followed by precomposition, we have $R' \tc \simeq \tc^{\textup{level}}R$. Therefore, it is sufficient to obtain a lax symmetric monoidal transformation
\[\tc^{level} \to \tc^{level}RD\]
and this is given by the unit of the adjunction $D\dashv R$. This provides  $\phi$ above. Indeed, $\phi$ is given by the canonical map 
\[\phi_X \co \vee_{i \in \z/m} \tc(X_i) \to \tc(\vee_{i \in \z/m} X_i).\]
Since $m \neq 0$, the coproducts above are finite and due to \cite[Corollary \RomanNumeralCaps{2}.1.7]{nikolausscholze2018topologicalcyclic}, colimits of cyclotomic spectra agree with those of the underlying spectra. Furthermore, both $\cycsp$ and $\sp$ are stable and therefore these coproducts are the corresponding products. As $\tc$ commutes with finite products, we obtain that $\phi$ is an equivalence as desired.
\end{proof}

What remains is to  construct the left hand square in Diagram \ref{diag for graded thh and graded tc} and show that it commutes. 

For the rest, we let $\grm(\C)$ denote $\fun(\z/m,\C)$ for a given presentably symmetric monoidal $\infty$-category $\C$. 
Abusing notation, let  
\[(-)^{tC_p}\co \grm(\sp^{BS^1})\to \grm(\sp^{BS^1})\]
also denote the lax symmetric monoidal functor given by levelwise application of $(-)^{tC_p}$. Slightly diverting from the notation of \cite{nikolausscholze2018topologicalcyclic}, we let  $\textup{Leq}\big(\grm(\sp^{BS^1}),(-)^{tC_p}\big)$ denote the $\infty$-category defined as the lax equalizer of the identity endofunctor and the endofunctor $(-)^{tC_p}$ on $\grm(\spcircle)$ in the sense of \cite[Definition \RomanNumeralCaps{2}.1.4]{nikolausscholze2018topologicalcyclic}. The $\infty$-category $\leqgrsp$ is defined via the following pullback square. 

\begin{equation*}
    \begin{tikzcd}
        \leqgrsp \ar[r]\ar[d]& \grm(\spcircle)^{\Delta^1}\ar[d,"\textup{ev}_0\times \textup{ev}_1"]\\
        \grm(\spcircle) \ar[r,"{(id,(-)^{tC_p})}"] & \grm(\spcircle) \times \grm(\spcircle)
    \end{tikzcd}
\end{equation*}

 In particular, the objects of this pullback $\infty$-category are given by an object of $E \in \grm(\spcircle)$ and a morphism $E \to E^{tC_p}$.

%, $\leqgrsp$ is a symmetric monoidal $\infty$-category and providing a (lax) symmetric monoidal functor 
%\[\C \to \leqgrsp\] 
%is equivalent to providing a (lax) symmetric monoidal functor $F \co \C \to \grm(\spcircle)$ together with a lax symmetric monoidal transformation $F \to (-)^{tC_p} \circ F$.

In  \cite[Appendix A]{antieau2020beilinson}, the authors construct $\thh$ as a functor on graded ring spectra and show that it fits into the following commuting diagram of  symmetric monoidal functors \cite[Proposition A.5 and Corollary A.15]{antieau2020beilinson}.
\begin{equation} \label{diag Appendix A diagram}
\begin{tikzcd}
    \alg_{E_1}(\grm(\sp)) \ar[d]\ar[r,"\thh"] & \leqgrsp \ar[d,"D''"]\\
    \alg_{E_1}(\sp)\ar[r,"\thh"] & \cycsp
    \end{tikzcd}
\end{equation}

As usual, the vertical arrows are induced by left Kan extension through $\z/m\to 0$. Here, we omit the functor $L_p$ (given by left Kan extension through $\cdot p \co \z/m \to \z/m$)  since $L_p$ is the identity functor whenever $p=1$ in $\z/m$.  
\begin{comment}
\subsection{Graded cyclotomic spectra}
Topological Hochschild homology should induce a symmetric monoidal functor
\[\thh \co \alg_{E_1}([\z/m,\sp]) \to [\z/m,\cycsp]\]
whenever $p =1$ in $\z/m$.
\begin{prop}
Let $p=1$ in $\z/m$. The following diagram of lax symmetric monoidal functors commutes. 
\begin{equation}
\begin{tikzcd}
\alg_{E_1}(\fun(\z/m,\sp)) \ar[r,"\thh"]\ar[d]& \ar[d]\fun(\z/m,\cycsp)\\
\alg_{E_1}(\sp) \ar[r,"\thh"] & \cycsp 
\end{tikzcd}
\end{equation}
Again, the vertical maps are given by the functors providing the underlying objects. 
\end{prop}
\begin{proof}
This follows by \cite[Corollary A.15 ]{antieau2020beilinson} applied to the symmetric monoidal functor $\z/m \to 0$ and Proposition \ref{prop graded cyclotomic is cyclotomic in graded}. 
\end{proof}
\end{comment}

\begin{cons}
Let $ \Alg_{E_1}(\grmsp) \to \grm(\cycsp)$ be the composite of the upper horizontal arrow in Diagram \eqref{diag Appendix A diagram} with the equivalence provided in the proposition below. This provides the left hand upper horizontal arrow in Diagram \eqref{diag main diag of the appendix} and the commuting diagram in the following proposition, together with Diagram \eqref{diag Appendix A diagram}  ensures that the left hand square in Diagram \eqref{diag main diag of the appendix} commutes.
\end{cons}

What remains is to prove the following proposition.
\begin{prop}\label{prop graded cyclotomic is cyclotomic in graded}
Let $m>0$ such that $m \mid p-1$. There is an equivalence of symmetric monoidal $\infty$-categories:
\begin{equation}\label{eq functor from graded cyclotomic to cyclotomic in graded}\leqgrsp  \xrightarrow{\simeq} \grm(\cycsp)
\end{equation}
such that the following diagram commutes. 
\begin{equation*}
    \begin{tikzcd}
      \leqgrsp\ar[r,"\simeq"]\ar[d,"D''"]& \grm(\cycsp)  \ar[dl,"D"]\\
      \cycsp
    \end{tikzcd}
\end{equation*}
\end{prop}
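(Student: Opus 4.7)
The plan is to exhibit both $\leqgrsp$ and $\grm(\cycsp)$ as the same pullback square in $\infty$-categories and then invoke the fact that $\fun(\z/m,-)$ preserves limits. By construction in \cite{nikolausscholze2018topologicalcyclic}, $\cycsp$ is the lax equalizer of $\id$ and $(-)^{tC_p}$ on $\spcircle$, and so fits into a pullback square whose corners are $\spcircle$, $\spcircle \times \spcircle$, and $(\spcircle)^{\Delta^1}$. First I would apply $\fun(\z/m,-)$ to this square, using the canonical identification $\fun(\z/m, \C^{\Delta^1}) \simeq \fun(\z/m, \C)^{\Delta^1}$, to obtain a pullback square describing $\fun(\z/m,\cycsp) = \grm(\cycsp)$. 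The endofunctor on $\grm(\spcircle)$ appearing there is $\fun(\z/m,(-)^{tC_p})$, which since $\z/m$ is discrete agrees, by postcomposition, with the ``levelwise $(-)^{tC_p}$'' endofunctor used in the definition of $\leqgrsp$. Comparing the two pullback squares then yields the underlying equivalence $\leqgrsp \simeq \grm(\cycsp)$.

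Next I would promote this to a symmetric monoidal equivalence. The idea is that the lax equalizer presentation of $\cycsp$ from \cite{nikolausscholze2018topologicalcyclic} is compatible with the symmetric monoidal structure, i.e.\ is a pullback in the $\infty$-category of presentably symmetric monoidal $\infty$-categories and lax symmetric monoidal functors, and that taking $\fun(\z/m,-)$ endowed with Day convolution preserves such pullbacks. Consequently, the Day convolution structure on $\grm(\cycsp)$ and the structure on $\leqgrsp$ inherited from Day convolution on $\grm(\spcircle)$ together with the lax monoidality of $(-)^{tC_p}$ are identified by the equivalence above. The hypothesis $p=1$ in $\z/m$ enters precisely here: under Day convolution, the Frobenius comparison map for a tensor product of a weight-$i$ object with a weight-$j$ object naturally lands in weight $p(i+j)$, and reconciling this with the levelwise Tate construction (which preserves weights) requires $p(i+j) \equiv i+j \pmod{m}$ for all $i,j$, i.e., $p=1$ in $\z/m$.

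The triangle with $D''$ then commutes by construction: both $D$ and $D''$ are induced by left Kan extension along $\z/m \to 0$ applied to each corner of the respective pullback squares, so the induced maps into $\cycsp$ agree under the equivalence. The main obstacle I anticipate is the bookkeeping in the second step --- namely making precise that the lax equalizer for $\cycsp$ is genuinely a pullback in symmetric monoidal $\infty$-categories with the appropriate notion of functoriality (in particular, that $(-)^{tC_p}$ provides the right kind of lax symmetric monoidal endofunctor), and then tracking Day convolution through this identification to confirm that $p=1$ in $\z/m$ is exactly the condition that ensures compatibility of the two symmetric monoidal structures.
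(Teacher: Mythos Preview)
Your underlying-category argument is essentially the same as the paper's: both use that $\fun(\z/m,-)$ preserves the pullback square defining $\cycsp$ as a lax equalizer, so $\grm(\cycsp)$ is identified with the lax equalizer of $\id$ and levelwise $(-)^{tC_p}$ on $\grm(\spcircle)$.

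The difference is in how the symmetric monoidal structure is handled. You try to realize the lax equalizer as a pullback in an $\infty$-category of symmetric monoidal $\infty$-categories with \emph{lax} symmetric monoidal functors and then argue that Day convolution preserves it; as you anticipate, this is delicate because one leg of the square involves the only-lax functor $(-)^{tC_p}$, and pullbacks along lax morphisms do not have an off-the-shelf theory. The paper sidesteps this entirely by invoking the universal property of lax equalizers from \cite[Construction~IV.2.1]{nikolausscholze2018topologicalcyclic}: to produce a symmetric monoidal functor $T\colon \grm(\cycsp)\to\leqgrsp$ one only needs a symmetric monoidal functor $F\colon \grm(\cycsp)\to\grm(\spcircle)$ together with a lax symmetric monoidal transformation $F\to (-)^{tC_p}\circ F$. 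Both are obtained by applying \cite[Corollary~3.7]{nikolaus2016stablemultyoneda} (and a short auxiliary lemma on levelwise transformations) to the canonical data $H\colon \cycsp\to\spcircle$ and $H\to (-)^{tC_p}\circ H$. Having constructed $T$ as a symmetric monoidal functor, one then checks it is an equivalence on underlying $\infty$-categories using the pullback preservation you already described. This is cleaner than your route and avoids the bookkeeping you flagged as an obstacle.

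One minor correction: your account of where the hypothesis $p=1$ in $\z/m$ enters is off. It is not needed for the proposition itself (the levelwise Tate construction carries no weight shift, so the lax equalizer of $\id$ and levelwise $(-)^{tC_p}$ is always symmetric monoidally equivalent to $\grm(\cycsp)$). The hypothesis is used \emph{before} this proposition, to identify the natural target of graded $\thh$ from \cite{antieau2020beilinson}---which is the lax equalizer of $\id$ and $L_p\circ(-)^{tC_p}$, where $L_p$ is left Kan extension along multiplication by $p$---with the lax equalizer appearing in the proposition; this identification requires $L_p=\id$, i.e.\ $p=1$ in $\z/m$.
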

\begin{proof}
We construct an equivalence in the opposite direction.
Due to \cite[Construction \RomanNumeralCaps{4}.2.1]{nikolausscholze2018topologicalcyclic},  giving a symmetric monoidal functor  \[T \co \grm(\cycsp) \to \leqgrsp\] is equivalent to giving a symmetric monoidal functor
\[F \co \grm(\cycsp) \to \grm(\spcircle)\]
together with a lax symmetric monoidal transformation $ F \to (-)^{tC_p} \circ F$. 

Applying \cite[Construction \RomanNumeralCaps{4}.2.1]{nikolausscholze2018topologicalcyclic} to the identity functor of $\cycsp$, one obtains a  symmetric monoidal  functor:
\[H\co \cycsp \to \sp^{BS^1},\]
together with a lax symmetric monoidal transformation $H \to (-)^{tC_p} \circ H$. By \cite[Corollary 3.7]{nikolaus2016stablemultyoneda}, this provides the desired  symmetric monoidal functor $F$ above. Furthermore, the lax symmetric monoidal transformation $H \to (-)^{tC_p} \circ H$ applied to the following lemma provides the desired lax symmetric monoidal transformation $ F \to (-)^{tC_p} \circ F$. This natural transformation and $F$ provides the symmetric monoidal functor $T$ above. Since $\fun(\z/m,-)$ commutes with limits, it commutes with the  pullback square defining $\cycsp$ as a lax equalizer; therefore, $T$ is an equivalence as desired. The functor claimed in \eqref{eq functor from graded cyclotomic to cyclotomic in graded} is now given by $T^{-1}$. 

For the second statement in the proposition, it is sufficient to show that the following diagram commutes.  
\begin{equation*}
\begin{tikzcd}
    \grm(\cycsp) \ar[r,"T"] \ar[d,"D"] & \leqgrsp\ar[dl,"D''"]\\
    \cycsp
\end{tikzcd}
\end{equation*}
Let $R$ and $R''$ denote the right adjoints of $D$ and $D''$ respectively. These are given by the corresponding restriction functors along $\z/m\to 0$. Since $T$ is an equivalence, $T^{-1}R''$ is a right adjoint to $D''T$ and therefore, it is sufficient to show that $R\simeq T^{-1}R''$, i.e.\ the right adjoints of $D$ and $D''T$ agree. For this, it is sufficient to show that $TR \simeq R''$. This  follows by the fact that the functor $F$ and the lax transformation $F\to (-)^{tC_p} \circ  F$ are defined levelwise. 
\end{proof}
\begin{lemm}
Let $\eta \co T\to S$ be a lax symmetric monoidal transformation of lax symmetric monoidal functors between presentably symmetric monoidal $\infty$-categories $\C$ and $\D$. Then applying $\eta$ levelwise induces a lax symmetric monoidal transformation between the induced lax symmetric monoidal functors from $\grm(\C)$ to $\grm(\D)$.
\end{lemm}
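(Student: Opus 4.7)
The plan is to deduce this from the fact that the construction producing lax symmetric monoidal functors on Day convolution functor categories is itself functorial in its argument. Concretely, I want to exhibit a functor
\[\Phi \co \fun^{\mathrm{lax},\otimes}(\C,\D) \to \fun^{\mathrm{lax},\otimes}(\grm(\C),\grm(\D))\]
from the $\infty$-category of lax symmetric monoidal functors $\C\to \D$, whose morphisms are precisely lax symmetric monoidal transformations, to the corresponding $\infty$-category for $\grm(\C)\to \grm(\D)$. Evaluating $\Phi$ on the morphism $\eta\co T\to S$ would then produce, by construction, a lax symmetric monoidal transformation between the induced functors, which is exactly the claim.

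First I would construct $\Phi$ by appealing to the universal property of the Day convolution symmetric monoidal structure \cite{glasman2016dayconv}. Postcomposition with a lax symmetric monoidal $F\co \C\to \D$ yields a functor $F_{*}\co \grm(\C)\to \grm(\D)$; its lax symmetric monoidal structure is controlled by the colimit formula
\[(X\otimes^{\mathrm{Day}} Y)_k \simeq \coprod_{i+j=k} X_i \otimes Y_j\]
together with the structural maps $F(X_i)\otimes F(Y_j)\to F(X_i\otimes Y_j)$ and $\mathbf{1}_{\D}\to F(\mathbf{1}_{\C})$. This construction depends functorially on $F$ and recovers the one used implicitly in \cite[Corollary 3.7]{nikolaus2016stablemultyoneda}, giving the desired $\Phi$.

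Applying $\Phi$ to $\eta$, the resulting transformation $\Phi(\eta)\co \Phi(T)\to \Phi(S)$ is levelwise given by $\eta$ itself, in the sense that $\Phi(\eta)_X$ at weight $i$ is $\eta_{X_i}\co T(X_i)\to S(X_i)$. Compatibility with the Day convolution lax structures reduces, under the coproduct formula for $(X\otimes^{\mathrm{Day}} Y)_k$, to the commutativity of the squares
\begin{equation*}
\begin{tikzcd}
T(X_i)\otimes T(Y_j) \ar[r]\ar[d,"\eta_{X_i}\otimes \eta_{Y_j}"] & T(X_i\otimes Y_j)\ar[d,"\eta_{X_i\otimes Y_j}"]\\
S(X_i)\otimes S(Y_j) \ar[r] & S(X_i\otimes Y_j),
\end{tikzcd}
\end{equation*}
which is exactly the condition for $\eta$ to be a lax symmetric monoidal transformation, together with the analogous unit compatibility.

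The main obstacle is organising the $\infty$-categorical coherence rigorously rather than at the level of objects and $1$-morphisms. I expect this to be essentially formal once $\Phi$ is exhibited via the universal property of Day convolution: the target is built functorially out of the data of $T$, $S$, and $\eta$, so the upgrade of \cite[Corollary 3.7]{nikolaus2016stablemultyoneda} from a construction on objects to an $\infty$-functor on the relevant $\infty$-categories of lax symmetric monoidal functors is what does all the work.
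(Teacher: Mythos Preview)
Your approach is essentially the paper's: both construct the required transformation by postcomposition, relying on \cite[Corollary 3.7]{nikolaus2016stablemultyoneda}. Where you flag coherence as the remaining obstacle, the paper dispatches it directly in the simplicial-set model: $\eta$, viewed as a $1$-simplex in $\Map_{\textup{NFin}_*}(\C^\otimes,\D^\otimes)$, postcomposes to a $1$-simplex in $\Map_{\textup{NFin}_*}\big(\textup{hom}_{/\textup{NFin}_*}(\z/m^\otimes,\C^{\otimes}),\textup{hom}_{/\textup{NFin}_*}(\z/m^\otimes,\D^{\otimes})\big)$ which lands in the full sub-simplicial-set with source $\grm(\C)^\otimes$ and target $\grm(\D)^\otimes$, bypassing the pointwise square-checking entirely.
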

\begin{proof}
We follow closely \cite[Section 3]{nikolaus2016stablemultyoneda}. Since the $\infty$-category of lax symmetric monoidal functors is a full subcategory of the $\infty$-category of functors over $\textup{NFin}_*$, it is sufficient  to show that  $\eta$ provides a map $\Delta^1 \to \Map_{\textup{NFin}_*}(\grm(\C)^{\otimes}, \grm(\D)^\otimes)$ of simplicial sets where the vertices of $\Delta^1$ correspond to the lax symmetric monoidal functors induced by $T$ and $S$. Using the universal property defining $\text{hom}_{/\text{NFin}_*}(-,-)$ in \cite[Section 3]{nikolaus2016stablemultyoneda}, one obtains the second map  below: 
\[\Delta^1 \to \Map_{\textup{NFin}_*}(\C^\otimes,\D^\otimes) \to \Map_{\textup{NFin}_*}(\textup{hom}_{/\text{NFin}_*}(\z/m^\otimes,\C^{\otimes}),\textup{hom}_{/\text{NFin}_*}(\z/m^\otimes,\D^{\otimes})),\]
where the first map represents $\eta$.
 Using the definition of $\grm(-)$ as a full simplicial subset of $\textup{hom}_{/\text{NFin}_*}(\z/m^\otimes,-)$ and  \cite[Corollary 3.7]{nikolaus2016stablemultyoneda}, we deduce that the $1$-simplex in the composite above lies in $ \Map_{\textup{NFin}_*}(\grm(\C)^{\otimes}, \grm(\D)^\otimes)$ with vertices corresponding to the lax symmetric monoidal functors induced by $T$ and $S$ as desired.

\end{proof}

\small
\providecommand{\bysame}{\leavevmode\hbox to3em{\hrulefill}\thinspace}
\providecommand{\MR}{\relax\ifhmode\unskip\space\fi MR }
% \MRhref is called by the amsart/book/proc definition of \MR.
\providecommand{\MRhref}[2]{%
  \href{http://www.ams.org/mathscinet-getitem?mr=#1}{#2}
}
\providecommand{\href}[2]{#2}


\begin{thebibliography}{BMCSY23}

\bibitem[ABM22]{ausoni2022adjroot}
Christian Ausoni, Haldun~{\"O}zg{\"u}r Bay{\i}nd{\i}r, and Tasos Moulinos, \emph{Adjunction of roots, algebraic {$ K $}-theory and chromatic redshift}, arXiv preprint arXiv:2211.16929 (2022).

\bibitem[AMMN22]{antieau2020beilinson}
Benjamin Antieau, Akhil Mathew, Matthew Morrow, and Thomas Nikolaus, \emph{On the {B}eilinson fiber square}, Duke Math. J. \textbf{171} (2022), no.~18, 3707--3806. \MR{4516307}

\bibitem[AR02]{ausonirogneskthryoftopologicalkthry}
Christian Ausoni and John Rognes, \emph{Algebraic {$K$}-theory of topological {$K$}-theory}, Acta Math. \textbf{188} (2002), no.~1, 1--39. \MR{1947457}

\bibitem[AR09]{ausoni2009kthryoffractionfield}
\bysame, \emph{Algebraic {K}-theory of the fraction field of topological {K}-theory}, arXiv preprint arXiv:0911.4781 (2009).

\bibitem[AR12]{ausoni2012kthrymoravaktheory}
\bysame, \emph{Algebraic {$K$}-theory of the first {M}orava {$K$}-theory}, J. Eur. Math. Soc. (JEMS) \textbf{14} (2012), no.~4, 1041--1079. \MR{2928844}

\bibitem[Aus05]{ausoni2005thhofku}
Christian Ausoni, \emph{Topological {H}ochschild homology of connective complex {$K$}-theory}, Amer. J. Math. \textbf{127} (2005), no.~6, 1261--1313. \MR{2183525}

\bibitem[Aus10]{ausoni2010kthryofcomplexkthry}
\bysame, \emph{On the algebraic {$K$}-theory of the complex {$K$}-theory spectrum}, Invent. Math. \textbf{180} (2010), no.~3, 611--668. \MR{2609252}

\bibitem[BDRR11]{baas2011stablebundlesrig}
Nils~A. Baas, Bj\o rn~Ian Dundas, Birgit Richter, and John Rognes, \emph{Stable bundles over rig categories}, J. Topol. \textbf{4} (2011), no.~3, 623--640. \MR{2832571}

\bibitem[BGT14]{blumberg2014uniqueness}
Andrew~J Blumberg, David Gepner, and Gon{\c{c}}alo Tabuada, \emph{Uniqueness of the multiplicative cyclotomic trace}, Advances in Mathematics \textbf{260} (2014), 191--232.

\bibitem[BMCSY23]{ben2023descent}
Shay Ben-Moshe, Shachar Carmeli, Tomer~M Schlank, and Lior Yanovski, \emph{Descent and cyclotomic redshift for chromatically localized algebraic {K}-theory}, arXiv preprint arXiv:2309.07123 (2023).

\bibitem[Boa99]{boardman1998conditionalconv}
J.~Michael Boardman, \emph{Conditionally convergent spectral sequences}, Homotopy invariant algebraic structures. A conference in honor of J. Michael Boardman. AMS special session on homotopy theory, Baltimore, MD, USA, January 7--10, 1998, Providence, RI: American Mathematical Society, 1999, pp.~49--84 (English).

\bibitem[BR07]{baker2007realizibilityofalgebraicgalois}
Andrew Baker and Birgit Richter, \emph{Realizability of algebraic {G}alois extensions by strictly commutative ring spectra}, Trans. Amer. Math. Soc. \textbf{359} (2007), no.~2, 827--857. \MR{2255198}

\bibitem[CSY21]{carmeli2021chromatic}
Shachar Carmeli, Tomer~M Schlank, and Lior Yanovski, \emph{Chromatic cyclotomic extensions}, arXiv preprint arXiv:2103.02471 (2021).

\bibitem[Day70]{day1970closedcategoriesoffunctors}
Brian Day, \emph{On closed categories of functors}, Reports of the {M}idwest {C}ategory {S}eminar, {IV}, Lecture Notes in Mathematics, Vol. 137, Springer, Berlin, 1970, pp.~1--38. \MR{0272852}

\bibitem[Gla16]{glasman2016dayconv}
Saul Glasman, \emph{Day convolution for {$\infty$}-categories}, Math. Res. Lett. \textbf{23} (2016), no.~5, 1369--1385. \MR{3601070}

\bibitem[HM03]{hesselholt2003ktheoryoflocalfields}
Lars Hesselholt and Ib~Madsen, \emph{On the {$K$}-theory of local fields}, Ann. of Math. (2) \textbf{158} (2003), no.~1, 1--113. \MR{1998478}

\bibitem[Hov97]{hovey1997vnelementsinringspectraandbordism}
Mark~A. Hovey, \emph{{$v_n$}-elements in ring spectra and applications to bordism theory}, Duke Math. J. \textbf{88} (1997), no.~2, 327--356. \MR{1455523}

\bibitem[HP22]{hesselholt2022dirac}
Lars Hesselholt and Piotr Pstragowski, \emph{Dirac geometry {I}: {C}ommutative algebra}, arXiv preprint arXiv:2207.09256 (2022).

\bibitem[HRW22]{hahn2022motivic}
Jeremy Hahn, Arpon Raksit, and Dylan Wilson, \emph{A motivic filtration on the topological cyclic homology of commutative ring spectra}, arXiv preprint arXiv:2206.11208 (2022).

\bibitem[HW20]{hahn2020redshift}
Jeremy Hahn and Dylan Wilson, \emph{Redshift and multiplication for truncated {B}rown-{P}eterson spectra}, arXiv preprint arXiv:2012.00864 (2020).

\bibitem[LMMT20]{land2020purity}
Markus Land, Akhil Mathew, Lennart Meier, and Georg Tamme, \emph{Purity in chromatically localized algebraic {$ K $}-theory}, arXiv preprint arXiv:2001.10425 (2020).

\bibitem[LSW20]{lind2020twistediteratedkthryofku}
John Lind, Hisham Sati, and Craig Westerland, \emph{Twisted iterated algebraic {{\(K\)}}-theory and topological {T}-duality for sphere bundles}, Ann. \(K\)-Theory \textbf{5} (2020), no.~1, 1--42 (English).

\bibitem[Lur16]{lurie2016higher}
Jacob Lurie, \emph{Higher algebra. 2014}, Preprint, available at http://www. math. harvard. edu/\~{} lurie (2016).

\bibitem[May77]{may1977einfspacesandeinfringspectra}
J.~Peter May, \emph{{$E\sb{\infty }$} ring spaces and {$E\sb{\infty }$} ring spectra}, Lecture Notes in Mathematics, vol. Vol. 577, Springer-Verlag, Berlin-New York, 1977, With contributions by Frank Quinn, Nigel Ray, and J\o rgen Tornehave. \MR{494077}

\bibitem[Nik16]{nikolaus2016stablemultyoneda}
Thomas Nikolaus, \emph{Stable $\infty $-operads and the multiplicative {Y}oneda lemma}, arXiv preprint arXiv:1608.02901 (2016).

\bibitem[NS18]{nikolausscholze2018topologicalcyclic}
Thomas Nikolaus and Peter Scholze, \emph{On topological cyclic homology}, Acta Math. \textbf{221} (2018), no.~2, 203--409. \MR{3904731}

\bibitem[Oka84]{oka1982multiplicativefiniteringspectra}
Shichir\^{o} Oka, \emph{Multiplicative structure of finite ring spectra and stable homotopy of spheres}, Algebraic topology, {A}arhus 1982 ({A}arhus, 1982), Lecture Notes in Math., vol. 1051, Springer, Berlin, 1984, pp.~418--441. \MR{764594}

\bibitem[Rog08]{rognes2008galois}
John Rognes, \emph{Galois extensions of structured ring spectra. {S}tably dualizable groups}, Mem. Amer. Math. Soc. \textbf{192} (2008), no.~898, viii+137. \MR{2387923}

\bibitem[Rog09]{rognes2009topologicallogarithmic}
\bysame, \emph{Topological logarithmic structures}, New topological contexts for {G}alois theory and algebraic geometry ({BIRS} 2008), Geom. Topol. Monogr., vol.~16, Geom. Topol. Publ., Coventry, 2009, pp.~401--544. \MR{2544395}

\bibitem[RSS15]{rognes2015localization}
John Rognes, Steffen Sagave, and Christian Schlichtkrull, \emph{Localization sequences for logarithmic topological {H}ochschild homology}, Math. Ann. \textbf{363} (2015), no.~3-4, 1349--1398. \MR{3412362}

\bibitem[RSS18]{rognes2018logthhofku}
\bysame, \emph{Logarithmic topological {H}ochschild homology of topological {$K$}-theory spectra}, J. Eur. Math. Soc. (JEMS) \textbf{20} (2018), no.~2, 489--527. \MR{3760301}

\bibitem[Sag14]{sagave2014logarithmic}
Steffen Sagave, \emph{Logarithmic structures on topological {$K$}-theory spectra}, Geom. Topol. \textbf{18} (2014), no.~1, 447--490. \MR{3159166}

\bibitem[Tho85]{thomason1985algkthryandetalecohomology}
R.~W. Thomason, \emph{Algebraic {$K$}-theory and \'{e}tale cohomology}, Ann. Sci. \'{E}cole Norm. Sup. (4) \textbf{18} (1985), no.~3, 437--552. \MR{826102}

\end{thebibliography}
\end{document}